\newcommand{\N}{\mathbb{N}}
\newcommand{\reals}{\mathbb{R}}
\DeclareMathOperator{\im}{im \, }
\newcommand{\pPi}{\mathnormal{\Pi}} 
\newcommand{\rank}{\operatorname{rank}}
\newcommand{\SUT}{SU\!T}
\newcommand{\BUT}{BU\!T}
\newtheorem{proposition}{Proposition}[section]
\newtheorem{theorem}[proposition]{Theorem}
\newtheorem{corollary}[proposition]{Corollary}
\newtheorem{lemma}[proposition]{Lemma}
\newtheorem{definition}[proposition]{Definition}
\newtheorem{remark}[proposition]{Remark}
\newtheorem{example}[proposition]{Example}
\pgfplotsset{compat=1.17}
\begin{document}

\title{Computing standard canonical forms of regular linear time varying DAEs via a preliminary stage}

\author[1]{Diana Est\'evez Schwarz}
\author[2]{Ren\'e Lamour}
\author[2]{Roswitha M\"arz}

\affil[1]{\small Berliner Hochschule f\"ur Technik}
\affil[2]{\small Humboldt University of Berlin, Institute of Mathematics}

\maketitle

\begin{abstract}

For regular linear time-invariant DAEs  the corresponding matrix pencil is regular and the computation of a standard canonical form is well-understood. 
Although the investigation of linear  DAEs with time-varying coefficients is more complex, it is analogously related to the examination of pairs of time-dependent matrix functions.

We show how the computation of a standard canonical form (SCF) for linear time-varying  DAEs becomes possible if a suitable block structure of the pair of matrix functions is found in a preliminary stage. 
Starting from this preliminary stage, an iterative process delivers an SCF. This iteration terminates in finitely many steps due to the nilpotency of an involved matrix. The corresponding transformation matrix functions can be provided systematically, which leads also to new representations of the canonical subspaces and projectors related to the original DAE.

We demonstrate how the structured canonical forms resulting in the tractability and strangeness frameworks and some DAEs in Hessenberg form from applications can be transformed into this preliminary stage and discuss some examples. 

\end{abstract}
\textbf{Keywords:} Pairs of time-varying matrix functions, differential-algebraic equation, higher index,  canonical form, block structures, Hessenberg form
\medskip \\
\textbf{AMS Subject Classification:} 15A99, 34A09, 34A12, 34A30

\setcounter{secnumdepth}{3}
\setcounter{tocdepth}{3}

\tableofcontents
\pagebreak

\section{Introduction} \label{sec:Intro}

We focus on linear differential algebraic equations (DAEs) in standard form,
\begin{align}\label{eq:1.DAE}
 Ex'+Fx=q,
\end{align}
in which $E,F:\mathcal I\rightarrow \reals^{m\times m}$ are sufficiently smooth, at least continuous, matrix functions on the interval $\mathcal I\subseteq\reals$. We assume regularity in the sense of \cite{commonground2024}, i.e. in particular $\im \begin{bmatrix}
	E(t) & F(t)
\end{bmatrix} = \reals^m$  and a constant $r=\rank E(t)<m$ for all $t\in\mathcal I$ . 
Differential and algebraic parts may be intertwined in a complex manner, which immediately leads to the question of whether there is an equivalent formulation with separated parts. 

\bigskip

Let us first recall  what equivalence means, see e.g., \cite{CRR}, \cite{KuMe2024}, among others.
The DAE \eqref{eq:1.DAE} and the DAE
\begin{align}\label{eq:1.DAE_bar}
 \bar E  \bar x'+\bar F \bar x=\bar q,
\end{align}
and correspondingly the two pairs of matrix functions $\{E,F\}$ and $\{\bar E,\bar F\}$, are called \textit{equivalent}, if there exist pointwise nonsingular, sufficiently smooth matrix functions $L, K:\mathcal I\rightarrow \reals^{m\times m}$, such that
\begin{align}\label{1.Equivalence}
 \bar E=LEK,\quad \bar F=LFK+LEK'.
\end{align}
For equivalent matrix pairs we use the notation
$
\{E,F\} \ \sim \ \{\bar E,\bar F\}
$  (or  $
\{E,F\} \ \overset{L,K}{\sim} \ \{\bar E,\bar F\}
$ if we want to emphasize the used nonsingular transfomration matrices),
i.e., by definition it holds
\begin{align*}
\{E,F\} \quad \sim \quad  \{LEK \ , \ LFK+LEK'\}.
\end{align*}

Recall that these transformations are reflexive, symmetric and transitive. Moreover, for the DAE they correspond to
\begin{itemize}
	\item the premultiplication of \eqref{eq:1.DAE} by $L$ and
	\item and the transformation of the unknown function $x=K\bar x$ 
\end{itemize}
leading to 
\[
LE(K\bar x)'+LF(K \bar x)=Lq =: \bar q,
\]
that with \eqref{1.Equivalence} corresponds to the DAE \eqref{eq:1.DAE_bar}. 
\bigskip

The question of particularly favorable equivalent representations, that is the transformability of DAEs into canonical forms, has been investigated for more than forty
 years.
A pair of the form
\begin{eqnarray*}
	\left\{ \begin{bmatrix}
	I_d & 0 \\
	0 & N
\end{bmatrix}, \begin{bmatrix}
	\Omega & 0 \\
	0 & I_{a}
\end{bmatrix} \right\} , \quad \Omega: \mathcal I\rightarrow\reals^{d\times d}, \quad N : \mathcal I\rightarrow\reals^{a\times a}, \quad d,a \in \N, \quad m=d+a,
\end{eqnarray*}
where $N$ is pointwise nilpotent and lower or upper triangular, is said to be in \emph{standard canonical form} (SCF), cf.\ \cite{CaPe83}.  If in addition $N$ is constant, then the system is in \emph{strong standard canonical form} (SSCF).

\bigskip

Starting from results from \cite{HaMae2023}, in \cite{commonground2024} the equivalence of different characterizations of regular $\{E, F \}$ is given. 
The common ground of these 
statements is the existence of  the index $\mu$ and the canonical characteristics
\begin{align}\label{char}
 r<m,\; \theta_{0}\geq\cdots\geq\theta_{\mu-2}>\theta_{\mu-1}=0,\; d=r-\sum_{i=0}^{\mu-2}\theta_i,
\end{align}
that are constant, persist under equivalence transformations and can be computed using different index frameworks, cf.\ \cite{commonground2024}.
\medskip

Only recently, in \cite{SSCF25} it was proved that regularity also means equivalent transformability into a strong standard canonical form, with a constant, nilpotent matrix $N$  of index $\mu$ and
\[
\theta_i=\rank N^{i+1} - \rank N^{i+2}, \quad i=0, \ldots, \mu-2, \quad d=r-\sum_{i=0}^{\mu-2} \theta_i,
\]
that is
\begin{eqnarray*}
\rank N=r-d, \quad \rank N^i = r-d - \sum_{k=0}^{i-2} \theta_k, \quad i=2, \ldots, \mu.
\end{eqnarray*}
In particular, this constant matrix can be assumed to have a certain elementary block-structure $N^{(E_{c/r})}$ that is determined by the canonical characteristics only, such that for a regular pair we know
\begin{eqnarray*}
\{E, F\} \sim
	\left\{ \begin{bmatrix}
	I_d & 0 \\
	0 & N^{(E_{c/r})}
\end{bmatrix}, \begin{bmatrix}
	\Omega & 0 \\
	0 & I_{a}
\end{bmatrix} \right\} , \quad \Omega: \mathcal I\rightarrow\reals^{d\times d}, \quad N^{(E_{c/r})}  \in \reals^{a\times a}.
\end{eqnarray*}

 This will be the starting point here, where we suppose that, by an initial equivalence transformation (a kind of preprocessing)
\begin{eqnarray}
\{E, F\} \overset{L_0,K_0}{\sim} \left\{ \begin{bmatrix}
	I_d & 0\\
	0 & N^{(E_{c/r})}
\end{bmatrix},F_0 \right\}, 
\label{eq:Initial_eq}
\end{eqnarray}
is already given, along with certain assumptions on the block structure of $F_0$. Then a direct computation of an SSCF becomes possible in four main steps with $L_i, K_i$, $i=1, \ldots, 4$. 
Consequently,   the tranformation $K:=K_0K_1K_2K_3K_4$ permits a description of 
the so-called canonical spaces and the canonical projector.
\medskip

The paper is organized as follows.
 In  Section \ref{sec:Can_sub_forms} we briefly collect some basics from literature, 
\begin{itemize}
	\item on the one hand,  the canonical spaces and the canonical projectors related to the DAE,
	\item on the other hand,  the canonical forms resulting in the frameworks of the tractability and strangeness index. 
\end{itemize}	
	We also define a new \emph{quasi} SCF for later considerations.
\medskip

These canonical forms motivate the particular assumption for the block structure we use in Section \ref{sec:preliminary}, where the preliminary stage of the SCF is introduced.
In Section \ref{sec:Procedure} then a procedure is described that, starting from such a block-structured preliminary stage, computes an SSCF step-by-step. 
Through a more detailed examination of the transformation matrices used in the steps, a description of the canonical spaces and the canonical projector is presented in Section \ref{sec:Pican}. 
\medskip

To emphasize the applicability of the procedure, first in Section \ref{sec:T-S-canonical} we show how the canonical forms described in Section  \ref{sec:Can_sub_forms} can equivalently be transformed into the preliminary stage of an SCF. 
Secondly,  in Section \ref{sec:Hessenberg} we show how classes of DAEs in Hessenberg form that are relevant in applications can also be transformed into such a preliminary stage.
Finally, in Section \ref{sec:Campbell-Moore} a challenging index-3 example from literature is discussed in detail.
Some technical details are collected
 in the appendix.
\medskip

The computations for all examples were carried out using the Python library SymPy for symbolic mathematics.

\subsection*{List of symbols and abbreviations}
		\begin{tabular}{ll}
ODE & ordinary differential equation\\
IVP & initial value problem\\
DAE &differential-algebraic equation\\
SCF & standard canonical form\\
SSCF & strong SCF, i.e., SCF with constant nilpotent matrix $N$\\
QuasiSCF &  particular block structure with  nilpotent matrix $N^{(E_{c/r})}$ and $R \in \BUT_{nonsingular}$\\
PreSCF & preliminary stage of a QuasiSCF (and therefore SCF and SSCF)\\
$I_n $ & identity matrix of size $n$\\
$\mathcal I \subseteq \reals$ & interval \\
$m \in \N$ & dimension of the DAE \\ 
$d \in \N$& degree of freedom, dimension of pure ODEs \\
$a=m-d \in \N$ &  dimension of pure DAEs\\
$\ker A$, $\im A$ & kernel, image of a  matrix valued function $A:\mathcal I\rightarrow \reals^{m\times m}$\\
$\{E,F\}$ & pair of quadratic matrix valued functions $E,F :\mathcal I\rightarrow \reals^{m\times m}$\\
$r \in \N$ & rank of $E$\\
$\BUT$ & set of all block upper triangular  $B: \mathcal I\rightarrow \reals^{a\times a}$\\
$\BUT_{nonsingular}$ & set of all nonsingular  $R \in \BUT$ \\
$N:\mathcal I\rightarrow \reals^{a\times a}$  &  nilpotent matrix valued function \\
$\SUT$ & set of all block strictly upper triangular $N:\mathcal I\rightarrow \reals^{a\times a}$ \\
$\SUT_{column}$ & set of all $N\in \SUT$ with full column rank blocks $N_{i,i+1}$\\
$\SUT_{row}$ &   set of all $N\in \SUT$ with full row rank blocks $N_{i,i+1}$ \\
$N^{(C)} \in \reals^{a\times a}$ & constant nilpotent matrix \\
$N^{(E_c)}\in \reals^{a\times a}$ & elementary constant nilpotent matrix from $\SUT_{column}$\\
$N^{(E_r)}\in \reals^{a\times a}$ & elementary constant nilpotent matrix from $\SUT_{row}$\\
$N^{(E_{c/r})}\in \reals^{a\times a}$ & either $N^{(E_c)}$ or $N^{(E_r)}$\\
$S_{can}$ & canonical flow-subspace of the DAE\\
$N_{can}$ & canonical complement to the flow-subspace\\
$\Pi_{can}$& 	projector  onto $S_{can}$ and along $N_{can}$\\
		\end{tabular}

\section{Canonical  subspaces and canonical forms} \label{sec:Can_sub_forms}
\subsection{The canonical subspaces of a DAE}

As pointed out in \cite{HaMae2023}, \cite{hanke_maerz2025}, there are two continuously time-varying canonical subspaces
related to a DAE \eqref{eq:1.DAE}. The first is the so-called flow subspace
\begin{eqnarray*}
S_{can}(\bar{t}) = \bigg\{ \bar{x} \in \mathbb{R}^m :\ 
&\text{there is a solution } x : (\bar{t} - \delta, \bar{t} + \delta) 
\cap \mathcal I \to \mathbb{R}^m \\
&\text{of the homogeneous DAE such that } x(\bar{t}) = \bar{x} \bigg\},
\end{eqnarray*}
which is the set of consistent values for the homogeneous DAE. Due to constraints, $\dim S_{can}(\bar{t})\leq r<m$.  
For regular DAEs, the flow subspace $S_{can}$ is varying in $\reals^m$ depending on $t$ and has constant dimension $d$. The second subspace  $N_{can}(\bar{t})$ is its so-called canonical complement in $\reals^m$, such that the initial conditions
\[
x(\bar{t}) - \bar{x} \in N_{can}(\bar{t})
\]
ensure uniquely solvable IVPs for \eqref{eq:1.DAE} without any additional conditions between $q$ and $\bar{x}$. It may also vary in $\reals^m$ depending on $t$ and has constant dimension $a$. 
In contrast to DAEs, an explicit ODE would yield $S_{can}=\reals^m$, $N_{can}=\left\{0\right\}$ for $t \in \mathcal I$. Conversely, so-called pure DAEs yield $S_{can}=\left\{0\right\}$, $N_{can}=\reals^m$ for $t \in \mathcal I$.

Below, we describe both subspaces by means of SCFs. Notice that if $\{E,F\}$ is already in SCF, then obviously
\[
S_{can}= \im \begin{bmatrix}
	I_d \\
	0
\end{bmatrix}, \quad N_{can}= \im \begin{bmatrix}
	0 \\
	I_a
\end{bmatrix}.
\]

\medskip
As mentioned before, transformability into SSCF is an equivalent criterion of regularity. 
Let us for a moment suppose that $L,K$ are nonsingular matrix function such that
\begin{eqnarray}
\{E, F\} &\overset{L,K}{\sim}&  \left\{ \begin{bmatrix}
	I_d & 0\\
	0 & N^{(C)}
\end{bmatrix},\begin{bmatrix}
	\Omega & 0\\
	0 & I_{m-d}
\end{bmatrix} \right\}, 
\label{eq:SSCF}
\end{eqnarray}
for a constant strictly upper triangular nilpotent matrix $N^{(C)} \in \reals^{a\times a}$ and a matrix function $\Omega: \mathcal I \rightarrow \reals^{d\times d}$.
For
\[
\begin{bmatrix}
	\bar q_1(t)\\
	\bar q_2(t)
\end{bmatrix}:= L(t)q(t), \quad \begin{bmatrix}
	u(t)\\
	v(t)
\end{bmatrix}:=K^{-1}(t) x(t)
\]
we see that the transformed DAE decouples into two parts.
\begin{itemize}
	\item The first part,
	\begin{align}
u' +\Omega u=\bar q_1(t), \label{eq:pureODE}
\end{align}
is an explicit  ODE living in $\reals^d$.

If at $t_0\in \mathcal I$ an initial value $u_0 \in \reals^d$ is given, $u(\cdot)$ can be computed as the unique solution of the IVP. 

\item The second part,
\[
N^{(C)}v'+v=\bar q_2(t),
\]
is a so-called pure DAE, and $v$ is uniquely determined by
\[
v(t)=\sum_{j=0}^{\mu-1}(-1)^j (N^{(C)})^j (\bar q_2)^{(j)}(t), \quad t \in \mathcal I.
\]
\end{itemize}
Consequently, all solutions of the original DAE \eqref{eq:1.DAE} are given by
\[
x(t)=K(t) \begin{bmatrix}
	u(t)\\
	v(t)
\end{bmatrix}, \quad t \in \mathcal I,
\]
and, in particular, 
\[
x(t_0)= K(t_0) \begin{bmatrix}
	u_0\\
	0
\end{bmatrix}+K(t_0) \begin{bmatrix}
	0\\
	v(t_0)
\end{bmatrix}=K(t_0) \begin{bmatrix}
	u_0\\
	v(t_0)
\end{bmatrix}, \quad u_0 \in \reals^d.
\]  
These considerations lead to the representations of the canonical subspaces
\begin{eqnarray*}
S_{can}(t)&:=& \im K (t)\begin{bmatrix}
		I_d\\
 0
\end{bmatrix} =\ \ker \begin{bmatrix}
	0 & I_a
\end{bmatrix}K^{-1} (t) \subset \reals^m, 
\\
N_{can}(t)&:=&\im K(t) \begin{bmatrix}
	0 \\I_{a}
\end{bmatrix}  =\ \ker \begin{bmatrix}
	I_{d} & 0
\end{bmatrix}K^{-1} (t)\subset \reals^m,
\end{eqnarray*}
with 
\[
S_{can}(t)\oplus N_{can}(t) = \reals^m, \quad t \in \mathcal I
\]
and the time-varying  canonical projector 
\[
\pPi_{can}(t):=K(t) \begin{bmatrix}
	I_d & 0 \\
	0 & 0
\end{bmatrix} K^{-1}(t)
\]
fulfilling
\[
\ker \pPi_{can}= N_{can}, \quad \im \pPi_{can}=S_{can}.
\]
Recall that this canonical projector has been introduced as completely-decoupling projector function in the context of the projector based decoupling, see e.g. \cite[Definition 2.37]{CRR}.

\subsection{About uniqueness}\label{sec:Uniqueness}

The known uniqueness of the canonical subspaces $S_{can}$ and $N_{can}$, and therefore also of the canonical projector $\pPi_{can}$, can be recognized using results for the SCF.
\medskip

For all linear DAEs, even if they are not regular but transformable into SCF, the SCF is unique in the sense that the dimensions of the ODE and the pure
 DAE are unique, and that both parts are unique up to equivalence transformations.\footnote{Let us clarify here some different use of \textit{regularity} in literature. DAEs that are transformable into SCF are called \textit{regular} in \cite{BergerIlchmann}, \textit{quasi-regular} in \cite{CRR} and \textit{almost regular} in \cite{commonground2024}. In this article,  the notion \textit{regularity} is used in the sense of \cite{commonground2024}. }

\begin{theorem}\cite[Theorem 2.1]{BergerIlchmann}\footnote{We slightly adapted this theorem from \cite{BergerIlchmann}, were only strictly lower triangular matrices were considered.}  [Uniqueness of SCF]\label{th:Uniqueness_SCF}
Let \( d, a, \tilde{d}, \tilde{a}  \in \mathbb{N}_0 \), $m=d+a=\tilde{d}+\tilde{a}$, and let
\[
\Omega \in C(\mathcal I, \mathbb{R}^{d \times d}), \quad \tilde{\Omega} \in C(\mathcal I, \mathbb{R}^{\tilde{d} \times \tilde{d}}),
\quad \mbox{and} \quad
N \in C(\mathcal I, \mathbb{R}^{a \times a}), \quad \tilde{N} \in C(\mathcal I, \mathbb{R}^{\tilde{a} \times \tilde{a}}),
\]
where both, \( N \) and \( \tilde{N} \), are pointwise strictly lower or strictly upper triangular matrix functions. If  there exist nonsingular 
\[
L \in C(\mathcal I,\mathbb{R}^{m \times m} ), \quad K \in C^1(\mathcal I, \mathbb{R}^{m \times m})
\]
such that
\[
\left\{ \begin{bmatrix}
	I_d & 0\\
	0 & N
\end{bmatrix},\begin{bmatrix}
	\Omega &0 \\
	0& I_a
\end{bmatrix} \right\} \quad \overset{L,K}{\sim} \quad \left\{ \begin{bmatrix}
	I_{\tilde{d}} & 0\\
	0 & \tilde{N}
\end{bmatrix},\begin{bmatrix}
	\tilde{\Omega} &0 \\
	0& I_{\tilde{a}}
\end{bmatrix}
\right\},
\]
then:
\begin{itemize}
    \item[(i)] \( d = \tilde{d} \), \( a = \tilde{a} \),
    \item[(ii)] \( L = \begin{bmatrix} L_{11} & 0 \\ 0 & L_{22} \end{bmatrix} \), 
                \( K = \begin{bmatrix} K_{11} & 0 \\ 0 & K_{22} \end{bmatrix} \),
                with \( L_{11} = K_{11}^{-1} , \quad  L_{22}=(K_{22}+NK_{22}')^{-1}\),
    \item[(iii)] 
    \(
		\left\{
    I_d, \Omega
		\right\}
		\overset{K_{11}^{-1}, K_{11}}{\sim}
    \left\{
    I_d, \tilde{\Omega}
		\right\}, \quad 
    \left\{
    N, I_a
		\right\}
		\overset{L_{22}, K_{22}}{\sim}
    \left\{
    \tilde{N},I_a
		\right\}.
    \)
\end{itemize}
\end{theorem}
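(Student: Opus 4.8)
The plan is to work directly from the defining relations $\bar E = LEK$ and $\bar F = LFK + LEK'$, expanded into $2\times 2$ blocks, using the strict triangularity of $N$ as the decisive tool. First I would establish (i) by an invariant argument: for any pair in SCF the homogeneous DAE decouples into $u'+\Omega u = 0$ and $Nv'+v=0$, and the second part forces $v\equiv 0$ by successive back-substitution along the strictly triangular structure of $N$; hence the space of local homogeneous solutions — which is exactly $S_{can}(\bar t)$ — has dimension equal to the size of the identity block. Since the transformation $x = K\bar x$ is a bijection between the solution sets of the two systems, this dimension is invariant, so $d=\tilde d$ and $a=\tilde a$.

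With the block sizes matched, write $L=[L_{ij}]$ and $K=[K_{ij}]$, $i,j\in\{1,2\}$, with $d$- and $a$-sized blocks, and expand the two identities into eight block equations. The relation $S_{can}(\bar t)$ of the first system $= K(\bar t)\,S_{can}(\bar t)$ of the second, i.e.\ $\reals^d\times\{0\} = K(\bar t)(\reals^d\times\{0\})$, forces the first $d$ columns of $K(\bar t)$ to span $\reals^d\times\{0\}$, hence $K_{21}\equiv 0$ and $K_{11}$ pointwise nonsingular. Feeding $K_{21}=0$ into the $(2,1)$- and $(1,1)$-blocks of $\bar E=LEK$ yields $L_{21}\equiv 0$ and $L_{11}=K_{11}^{-1}$, while the $(1,1)$-block of $\bar F = LFK+LEK'$ reads $K_{11}^{-1}(\Omega K_{11}+K_{11}')=\tilde\Omega$; this is the ODE part of (iii) together with the $d$-part of (ii).

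The crux is to show that the off-diagonal blocks vanish. The $(1,2)$-block of $\bar E=LEK$ gives $K_{12}=-K_{11}L_{12}NK_{22}$, and since $K_{12},K_{11},K_{22}$ are $C^1$ with $K_{11},K_{22}$ invertible, the product $Q:=L_{12}N$ is $C^1$. Substituting into the $(1,2)$-block of $\bar F=LFK+LEK'$ and simplifying (using $L_{11}=K_{11}^{-1}$ and the $\tilde\Omega$-relation above) collapses everything to $L_{12}=\tilde\Omega Q+Q'$, and hence $Q=(\tilde\Omega Q+Q')N$. I would finish by reading this relation column by column: because $N$ is strictly (upper or lower) triangular, the $j$-th column of $Q$ is a combination of only those columns of $\tilde\Omega Q+Q'$ that precede it, so an induction over the columns forces $Q\equiv 0$; then $L_{12}=\tilde\Omega Q+Q'=0$ and $K_{12}=0$. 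This inductive step — where the nilpotency of $N$ is indispensable — is the main obstacle; without strict triangularity the relation $Q=(\tilde\Omega Q+Q')N$ would admit nonzero solutions.

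It then remains to read off the rest: with $L$ and $K$ now block-diagonal, the $(2,2)$-blocks of the two identities give $L_{22}NK_{22}=\tilde N$ and $L_{22}(K_{22}+NK_{22}')=I_a$, i.e.\ $L_{22}=(K_{22}+NK_{22}')^{-1}$ and $\{N,I_a\}\overset{L_{22},K_{22}}{\sim}\{\tilde N,I_a\}$, completing (ii) and (iii).
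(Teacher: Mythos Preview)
Your argument is correct and self-contained. By contrast, the paper does not give an independent proof: it simply cites \cite{BergerIlchmann} for the strictly lower triangular case and observes that the strictly upper triangular case follows from the lower one by conjugating with the permutation that reverses the order of rows and columns (this exchanges strictly lower and strictly upper triangular $N$ while preserving the SCF structure). So your route is genuinely different: you work directly from the eight block identities coming from $\bar E=LEK$ and $\bar F=LFK+LEK'$, using the invariance of $S_{can}$ to get $d=\tilde d$ and $K_{21}=0$, and then eliminating the off-diagonal blocks via the relation $Q=(\tilde\Omega Q+Q')N$ and a column-by-column induction on the strictly triangular $N$. What your approach buys is a proof that handles both triangular orientations uniformly and makes the role of nilpotency explicit at the crucial point; what the paper's approach buys is brevity, at the price of relying on an external reference.
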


\begin{proof}
A proof for strictly lower triangular $N$ was given in \cite{BergerIlchmann}. For strictly upper triangular $N$, it follows with permutation matrices that invert the order of rows/columns.
\end{proof}

The canonical subspaces, which are unique, may be represented in different ways, i.e., using different bases.
Therefore, for a DAE, there is neither a unique ODE in $\reals^d$ nor a unique  pure DAE.

\begin{corollary} \label{cor:uniqSCF}
For
\[
\{E, F\} \ \overset{L,K}{\sim} \ \left\{ \begin{bmatrix}
	I_d & 0\\
	0 & N
\end{bmatrix},\begin{bmatrix}
	\Omega & 0\\
	0 & I_{m-d}
\end{bmatrix} \right\} \
 \overset{\bar L, \bar K}{\sim} \
 \left\{ \begin{bmatrix}
	I_{\tilde{d}} & 0\\
	0 & \tilde{N}
\end{bmatrix},\begin{bmatrix}
	\tilde{\Omega} &0 \\
	0& I_{\tilde{n}}
\end{bmatrix}
\right\}
\]
with
\[
\bar L =\begin{bmatrix}
	\bar K_{11}^{-1} & 0 \\
	0 & \bar L_{22}
\end{bmatrix}, \quad \bar K =\begin{bmatrix}
	\bar K_{11} & 0 \\
	0 & \bar K_{22}
\end{bmatrix}
\] 
and
\[
\hat{K}:=K \begin{bmatrix}
	\bar K_{11} & 0 \\
	0 & \bar K_{22}
\end{bmatrix}, 
\]
it holds
\[
S_{can}(t)= \im K (t)\begin{bmatrix}
	I_d\\
	0
\end{bmatrix} =\im \hat{K} (t)\begin{bmatrix}
	I_d\\
	0
\end{bmatrix},
\quad
N_{can}(t)=\im K(t) \begin{bmatrix}
	0\\
	I_{m-d}
\end{bmatrix}=\im \hat K(t) \begin{bmatrix}
	0\\
	I_{m-d}
\end{bmatrix} ,
\]
\[
\pPi_{can}(t)=K(t) \begin{bmatrix}
	I_d & 0 \\
	0 & 0
\end{bmatrix} K^{-1}(t)= \hat K(t) \begin{bmatrix}
	I_d & 0 \\
	0 & 0
\end{bmatrix} \hat K^{-1}(t).
\]
Moreover, for
\[
\begin{bmatrix}
	u(t)\\
	v(t)
\end{bmatrix}:= K^{-1}(t) x(t), \quad \begin{bmatrix}
	\hat u(t)\\
	\hat v(t)
\end{bmatrix}:=\hat{K}^{-1}(t) x(t)=\begin{bmatrix}
	\bar K_{11}^{-1} & 0 \\
	0 & \bar K_{22}^{-1}
\end{bmatrix}\begin{bmatrix}
	u(t)\\
	v(t)
\end{bmatrix}
\]
and the two ODEs
\[
u'=\Omega u, \quad \hat u' =\hat \Omega \hat u,
\]
it holds
\begin{eqnarray}
\hat \Omega= \bar K_{11}^{-1} \Omega \bar K_{11}+\bar K_{11}^{-1} \bar K_{11}' .
\label{eq:Omega_hat}
\end{eqnarray}

\end{corollary}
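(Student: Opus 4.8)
The plan is to derive everything in Corollary \ref{cor:uniqSCF} directly from Theorem \ref{th:Uniqueness_SCF} by specializing it to the situation at hand. The first observation is that the composed transformation $\{E,F\}\overset{L,K}{\sim}\{\dots\}\overset{\bar L,\bar K}{\sim}\{\dots\}$ means, by transitivity of equivalence, that $\{E,F\}\overset{\bar L L,\; K\bar K}{\sim}\{\dots\}$; in particular $\hat K = K\bar K$ is exactly the composed right transformation, which already explains the chosen form $\hat K = K\,\mathrm{diag}(\bar K_{11},\bar K_{22})$. The block-diagonal structure of $\bar L,\bar K$ that is assumed here is precisely what Theorem \ref{th:Uniqueness_SCF}(ii) guarantees, so no extra work is needed there — it is part of the hypothesis being quoted.

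Next I would verify the subspace and projector identities. Since $\bar K$ is block diagonal,
\[
K\bar K \begin{bmatrix} I_d\\ 0\end{bmatrix} = K\begin{bmatrix}\bar K_{11} & 0\\ 0 & \bar K_{22}\end{bmatrix}\begin{bmatrix} I_d\\ 0\end{bmatrix} = K\begin{bmatrix}\bar K_{11}\\ 0\end{bmatrix} = \Big(K\begin{bmatrix} I_d\\ 0\end{bmatrix}\Big)\bar K_{11},
\]
and because $\bar K_{11}$ is pointwise nonsingular, right-multiplication by it does not change the image; hence $\im \hat K(t)\begin{bmatrix} I_d\\0\end{bmatrix} = \im K(t)\begin{bmatrix} I_d\\0\end{bmatrix} = S_{can}(t)$. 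The identical argument with $\begin{bmatrix}0\\ I_{m-d}\end{bmatrix}$ and $\bar K_{22}$ gives $N_{can}$. For the projector, I would compute $\hat K\,\mathrm{diag}(I_d,0)\,\hat K^{-1} = K\bar K\,\mathrm{diag}(I_d,0)\,\bar K^{-1}K^{-1}$ and note that $\bar K\,\mathrm{diag}(I_d,0)\,\bar K^{-1} = \mathrm{diag}(\bar K_{11} I_d \bar K_{11}^{-1},\, \bar K_{22} 0\, \bar K_{22}^{-1}) = \mathrm{diag}(I_d,0)$ since $\bar K$ is block diagonal and the central factor annihilates the second block; therefore $\hat K\,\mathrm{diag}(I_d,0)\,\hat K^{-1} = K\,\mathrm{diag}(I_d,0)\,K^{-1} = \pPi_{can}(t)$.

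Finally, the change-of-variables and the ODE transformation formula \eqref{eq:Omega_hat}. From $\hat K = K\,\mathrm{diag}(\bar K_{11},\bar K_{22})$ we get $\hat K^{-1} = \mathrm{diag}(\bar K_{11}^{-1},\bar K_{22}^{-1})K^{-1}$, so $\begin{bmatrix}\hat u\\ \hat v\end{bmatrix} = \hat K^{-1}x = \mathrm{diag}(\bar K_{11}^{-1},\bar K_{22}^{-1})\begin{bmatrix} u\\ v\end{bmatrix}$, which is the stated relation; in particular $\hat u = \bar K_{11}^{-1}u$. Differentiating $u = \bar K_{11}\hat u$ gives $u' = \bar K_{11}'\hat u + \bar K_{11}\hat u'$, and substituting into $u' = \Omega u = \Omega \bar K_{11}\hat u$ yields $\bar K_{11}\hat u' = (\Omega\bar K_{11} - \bar K_{11}')\hat u$, hence $\hat u' = (\bar K_{11}^{-1}\Omega\bar K_{11} - \bar K_{11}^{-1}\bar K_{11}')\hat u$. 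Comparing with $\hat u' = \hat\Omega\hat u$ and using that the ODE part is uniquely transformed as stated in Theorem \ref{th:Uniqueness_SCF}(iii) — namely $\{I_d,\Omega\}\overset{\bar K_{11}^{-1},\bar K_{11}}{\sim}\{I_d,\hat\Omega\}$, which by the definition of equivalence \eqref{1.Equivalence} means $\hat\Omega = \bar K_{11}^{-1}\Omega\bar K_{11} + \bar K_{11}^{-1}\bar K_{11}'$ — gives \eqref{eq:Omega_hat}. The only sign point to be careful about is the placement of the derivative term in \eqref{1.Equivalence}: there $\bar F = LFK + LEK'$, which with $L = \bar K_{11}^{-1}$, $E = I_d$, $F = \Omega$, $K = \bar K_{11}$ indeed produces $+\bar K_{11}^{-1}\bar K_{11}'$, consistent with the computation above once one notes that writing the ODE as $u' + \Omega u = \bar q_1$ versus $u' = \Omega u$ flips the sign of $\Omega$; working consistently with one convention (here $u' = \Omega u$, i.e. $F = -\Omega$ relative to \eqref{eq:1.DAE}, or equivalently just reading \eqref{1.Equivalence} for the pair $\{I_d,\Omega\}$ as an abstract pair) resolves it. I expect this bookkeeping of signs and conventions to be the only mildly delicate point; everything else is a direct specialization of the already-quoted uniqueness theorem.
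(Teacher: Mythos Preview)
Your proposal is correct and is essentially the same direct verification the paper has in mind; the paper's own proof is the single line ``The claim follows straight forward,'' and you have simply written out the straightforward block-diagonal computations that justify it. Your observation about the sign convention is apt: the line ``$u'=\Omega u$'' in the statement is informal (elsewhere the pure ODE is $u'+\Omega u=\bar q_1$), and the formula \eqref{eq:Omega_hat} is exactly what the equivalence relation \eqref{1.Equivalence} applied to the pair $\{I_d,\Omega\}$ via Theorem~\ref{th:Uniqueness_SCF}(iii) gives, as you note.
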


\begin{proof}
The claim follows straight forward. 
\end{proof}

Not surprisingly, since we are confronted with time-varying transformations, they may change the stability behavior. 

\begin{example} \label{ex:ODEK11-1}
For the ODE
\[
u'=2u, 
\]
and the transformation with $K_{11}(t)=e^{-5t}$
we obtain
\[
\hat u'=\left(e^{5t}2e^{-5t}+e^{5t}(-5)e^{-5t}\right)\hat u=-3\hat u.
\]
\end{example}

\begin{example} (cf. Example \ref{ex:HMM98}) \label{ex:ODEK11-2}
For the ODE
\[
u'=\lambda u
\]
and the transformation with $K_{11}(t)=\sqrt{ \left( \eta t - 1\right)^{2} + 1 }=:\gamma(t)$ we obtain
\begin{small}
\begin{eqnarray*}
\hat u'&=& \left(\frac{1}{\gamma(t)} \lambda  \gamma(t) + \frac{1}{ \gamma(t)}\frac{2\eta^2 t -  2\eta }{2\gamma(t)}\right )\hat u
= \left(\lambda+ \frac{\eta^2 t -  \eta }{  \gamma(t)^2 }\right)\hat u.
\end{eqnarray*}
\end{small}
\end{example}

With these examples we underline that, as soon as time-varying transformations are involved, the stability behavior might change arbitrarily.
This motivates the following definition.

\begin{definition}
If a DAE is equivalently transformable into SCF, then we  call every explicit ODE part \eqref{eq:pureODE}  living in the configuration space $\reals^d$ for $d>0$ and resulting from such a transformation a \emph{pure} ODE.
\end{definition}

Observe that pure ODEs do not involve any derivatives of the right-hand side $q$ and  are not unique.

\begin{remark}
Above we agreed upon the name pure ODE part for \eqref{eq:pureODE} and do not use the notation essential underlying ODE (EUODE) for good reasons. The  label EUODE itself  was first used in \cite{AscherPetzold91} for special DAEs without any reference to an SCF and generalized in \cite{LinhMaerz} for arbitrary regular DAEs via a very special transformation into a structured SCF. EUODEs are, so to speak, compressed versions of the so-called projector-based inherent ODEs in the context of the tractability framework.  By definition, EUODEs  are also  pure ODE parts, however,  they are strictly bound to  specific transformations $L$ and $K$, which in turn arise from complete projector-based decouplings. An EUODE  shares with any pure ODE part the dimension $d$ and the property that it does not involve any derivatives of the right-hand side $q$. As it is verified in \cite{LinhMaerz},  to each regular DAE, one can form the compression in such a way that  the resulting  EUODE reflects the Lyapunov spectrum of the DAE. Therefore, among the pure ODEs parts of a DAE there is a spectrum preserving one. However, it is still an open question how a group of transformations could be figured out in order to obtain a SCF that contains an spectrum preserving pure ODE part without recourse to the projector-based framework. 
\end{remark}

\subsection{Block-structured canonical forms of regular DAEs}

The computation of an SCF for time-varying DAEs is a nontrivial transformation in general. Indeed, there are further canonical forms which play their role within the projector-based \cite{CRR} and the strangeness-reduction \cite{KuMe2024} frameworks, both of them designed in a constructive way.
  We use them here also to motivate the structural assumptions in Section \ref{sec:preliminary} and will show in Section \ref{sec:T-S-canonical} that they (almost) fulfill them.

Following the notation of \cite[Section 2.10.1]{CRR} a  time-varying pair
\[
\left\{ 
\begin{bmatrix}
	I_{d} & E_{12}\\
	0 & N
\end{bmatrix}, \quad \begin{bmatrix}
	F_{11} & 0 \\
	F_{21} & I_{a}
\end{bmatrix}
\right\},
\]
for a strictly block upper  triangular matrix function
\[
N = \begin{bmatrix}
    0 & N_{12} & \cdots & N_{1\mu} \\
    & \ddots & \ddots & \vdots \\
    & & \ddots & N_{\mu-1,\mu} \\
    & & & 0
    \end{bmatrix}\begin{matrix}
		\ell_1 \\
		\vdots\\
		\ell_{\mu-1}\\
		\ell_{\mu}
		\end{matrix}, \quad \quad \ell_i \in \N, \quad \ell = \sum_{i=1}^{\mu} \ell_i=a,
\]
is said to be in 
\begin{enumerate}
    \item \textbf{T-canonical form} if
    \[
    E_{12} = 0, 
		\]
		 and each \( N_{i,i+1} \) has full \emph{column} rank for \( i = 1, \ldots, \mu - 1 \), i.e., \( N =N^c \in \SUT_{column}\) using the notation from Appendix \ref{Appendix_block}.
		
Note that this implies decreasing $\ell_1 \geq \cdots \geq \ell_{\mu} $ and $\rank N_{i,i+1} = \ell_{i+1} $.		
\medskip		

For DAEs, a  T-canonical form primarily shows a pure ODE part:
\begin{eqnarray*}
x_1'+F_{11} x_1&=&q_1\\
N^c x_2'+x_2 &=& q_2-F_{21}x_1,
\end{eqnarray*}
whereas in case of a complete decoupling $F_{21}=0$ is given and therefore the T-canonical form is already in SCF \cite{CRR}. 
		\item \textbf{S-canonical form} if
    \[
    F_{21} = 0, \quad E_{12} = \begin{bmatrix} 0 & (E_{12})_2 & \cdots &(E_{12})_{\mu} \end{bmatrix}, \quad \]
     and each \( N_{i,i+1} \) has full \emph{row} rank for \( i = 1, \ldots, \mu - 1 \), i.e., \( N =N^r\in \SUT_{row}\) using the notation from Appendix \ref{Appendix_block}.

    Note that this implies increasing $\ell_1 \leq \cdots \leq \ell_{\mu} $ and $\rank N_{i,i+1} = \ell_{i} $.
\medskip

For DAEs, a S-canonical form primarily shows a pure DAE:
\begin{eqnarray*}
x_1'+E_{12}x_2'+F_{11}x_1&=&q_1,\\
N^r x_2'+x_2 &=& q_2.
\end{eqnarray*}

\end{enumerate}

Having the T- or S-canonical form with block sizes $d, \ell_1, \ldots, \ell_{\mu}$, we know immediately the canonical characteristics $r, \theta_0, \ldots, \theta_{\mu}$, see Appendix \ref{Appendix_block}.
\medskip

We will show how each of these canonical forms can be transformed into a preliminary stage of an SCF according to the next Section \ref{sec:preliminary}. In this context, the properties of block structured matrix valued functions with decreasing or increasing block sizes, that are summarized in Appendix \ref{Appendix_block}, play their role. 
Note that in both cases
$
a=\sum_{i=1}^{\mu} \ell_i=m-d$. 

\subsection{Pairs in QuasiSCF} \label{sec:QuasiSCF}
 In \cite{commonground2024},  a couple of different definitions  of linear regular DAEs were shown to be equivalent. Here, we
 focus only on those that involve block structured standard canonical forms and its variants from \cite{SSCF25}. 
We deal with the block structures from last section, which are described in detail in the Appendix \ref{Appendix_block}. For simplicity, if either strictly upper block matrices $\SUT_{columns}$ or $\SUT_{row}$ can be considered, we use the abbreviation $\SUT_{c/r}$.

\begin{proposition}  (Special case of \cite[Theorem 8.1]{commonground2024}) \label{def:regEF_N}
A pair of matrix functions $\{E,F\}$ is regular, iff 
\[
\{E,F\} \ \sim \ \left\{ \begin{bmatrix}
	I_{d} & 0 \\
	0 & N^{c/r}
\end{bmatrix},\begin{bmatrix}
	\Omega & 0 \\
	0 & I_{a}
\end{bmatrix} \right\},
\]
with 
\[
N^{c/r} \in \SUT_{c/r}, \quad  \Omega :\mathcal I\rightarrow \reals^{d \times d}.
\]  
\end{proposition}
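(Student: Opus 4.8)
The plan is to derive both implications from results already recalled in the introduction, so that this proposition becomes essentially a repackaging of \cite[Theorem 8.1]{commonground2024} together with \cite{SSCF25}, specialized to block-structured canonical forms.

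For the ``only if'' direction I would start from the fact, quoted above, that a regular pair $\{E,F\}$ is equivalent to a strong standard canonical form with the constant elementary nilpotent matrix $N^{(E_{c/r})}$ determined by the canonical characteristics \eqref{char}. Since $N^{(E_c)}\in\SUT_{column}$ and $N^{(E_r)}\in\SUT_{row}$ by construction (see Appendix \ref{Appendix_block}), one simply sets $N^{c/r}:=N^{(E_{c/r})}$; this is an admissible --- in fact constant --- choice, and the claimed equivalence follows at once.

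For the ``if'' direction I would invoke that regularity in the sense of \cite{commonground2024} is invariant under equivalence transformations, so that it suffices to verify that the block-diagonal pair on the right-hand side is itself regular. The condition $\im\begin{bmatrix}E & F\end{bmatrix}=\reals^m$ is immediate from its block structure, the first $d$ and the last $a$ block columns already spanning $\reals^m$. Writing $a=\ell_1+\cdots+\ell_\mu$ for the block sizes of $N^{c/r}$ and using the rank identities for matrices in $\SUT_{c/r}$ collected in Appendix \ref{Appendix_block} --- in particular that the superdiagonal blocks have constant full rank, so that the powers $(N^{c/r})^k$ have ranks determined by the $\ell_i$ with $(N^{c/r})^{\mu}=0$ and $(N^{c/r})^{\mu-1}\neq 0$ --- one reads off that $E$ has constant rank $r=d+\rank N^{c/r}<m$ and that the pair possesses the index $\mu$ together with the canonical characteristics \eqref{char} expressed through $d,\ell_1,\ldots,\ell_\mu$. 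By the common-ground characterization of regularity from \cite{commonground2024}, this makes the pair regular, hence $\{E,F\}$ regular as well.

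I do not anticipate a genuine obstacle: the entire substance sits in the cited \cite{SSCF25} (existence of the block-structured, even constant, nilpotent part for regular pairs) and \cite{commonground2024} (invariance of regularity and the equivalence of the characterizing conditions). The only point that needs a moment's care is the bookkeeping in Appendix \ref{Appendix_block} guaranteeing that full column/row rank of the blocks $N_{i,i+1}$ forces constant block sizes $\ell_i$ and the stated ranks of the powers of $N^{c/r}$; this is exactly what lets the displayed pair serve as the block-structured starting point for the construction in Section \ref{sec:preliminary}.
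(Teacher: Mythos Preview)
The paper does not prove this proposition; it merely records it as a special case of \cite[Theorem 8.1]{commonground2024}, with no argument supplied. So there is no in-paper proof to compare against --- only the citation.

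Your ``if'' direction is fine: the block-diagonal pair is regular by direct inspection together with the rank identities for $\SUT_{c/r}$ collected in Appendix~\ref{Appendix_block}, and regularity is invariant under equivalence.

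Your ``only if'' direction, however, runs the logical dependencies in the wrong order. You invoke the result from \cite{SSCF25} (quoted in the introduction) that every regular pair admits an SSCF with the constant elementary $N^{(E_{c/r})}$. But in the paper's own presentation --- see the passage immediately following Proposition~\ref{def:regEF_N} --- that SSCF25 result is a \emph{refinement} of Proposition~\ref{def:regEF_N}: one first has Proposition~\ref{def:regEF_N} from \cite{commonground2024}, and only afterwards does \cite{SSCF25} sharpen the time-varying $N^{c/r}$ to the constant $N^{(E_{c/r})}$. Using the downstream refinement to establish the upstream source is at best inelegant and at worst circular, depending on how the argument in \cite{SSCF25} is actually organized. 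The paper avoids this by pointing directly at \cite[Theorem~8.1]{commonground2024} for the forward implication, and that is what you should do as well.
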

The starting point for the following considerations is based on the existence of a block structured SSCF with decreasing or increasing block sizes of the constant nilpotent matrix. In particular, we can assume that the constant matrix $N^{(C)}$ has an elementary form  $N^{(E_{c/r})}$, see \cite{SSCF25}:
\begin{itemize}
	\item  for $\ell_1 \geq \cdots \geq \ell_{\mu} $: $N^{(E_c)}$ (\textbf{E}lementary form with full \textbf{c}olumn rank) defined by 
	\[
	N^{(E_c)}_{i,i+1}=\begin{bmatrix}
		I_{\ell_{i+1}} \\
		0
	\end{bmatrix}  , \quad N^{(E_c)}_{i,j}=0 \quad \mbox{ for}  \ j\neq i+1,
	\]
	\item  for $\ell_1 \leq \cdots \leq \ell_{\mu} $: $N^{(E_r)}$ (\textbf{E}lementary form with full \textbf{r}ow rank) defined by 
	\[
	N^{(E_r)}_{i,i+1}=\begin{bmatrix}
		I_{\ell_{i}} &
		0
	\end{bmatrix}  , \quad  N^{(E_r)}_{i,j}=0 \quad \mbox{for}  \ j\neq i+1.
	\]
\end{itemize}
Both elementary matrices are illustrated in Figures \ref{fig:Bilder_col} and \ref{fig:Bilder_row} and permit the following characterization of regularity, where $N^{(E_{c/r})}$ means either $N^{(E_{c})}$ or $N^{(E_{r})}$.

\begin{proposition}
 A pair of matrix functions $\{E,F\}$ is regular, iff  
\[
\{E,F\} \ \sim \  \left\{ \begin{bmatrix}
	I_{d} & 0 \\
	0 & N^{(E_{c/r})}
\end{bmatrix}, \begin{bmatrix}
	\Omega & 0 \\
	0 & I_{a}
\end{bmatrix} \right\},
\]
with  $\Omega :\mathcal I\rightarrow \reals^{d \times d}$.
\end{proposition}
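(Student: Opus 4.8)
The plan is to deduce this characterization directly from the immediately preceding Proposition~\ref{def:regEF_N}, using only the structural facts about the elementary forms $N^{(E_{c/r})}$ collected in Appendix~\ref{Appendix_block} (together with the fact, quoted from \cite{SSCF25}, that a regular pair admits a block-structured \emph{constant} nilpotent matrix with decreasing or increasing block sizes). One implication is immediate: if $\{E,F\}\sim\left\{\begin{bmatrix}I_d&0\\0&N^{(E_{c/r})}\end{bmatrix},\begin{bmatrix}\Omega&0\\0&I_a\end{bmatrix}\right\}$, then since $N^{(E_{c})}\in\SUT_{column}$ and $N^{(E_{r})}\in\SUT_{row}$ — each block $N^{(E_c)}_{i,i+1}=\begin{bmatrix}I_{\ell_{i+1}}\\0\end{bmatrix}$ has full column rank and each $N^{(E_r)}_{i,i+1}=\begin{bmatrix}I_{\ell_i}&0\end{bmatrix}$ has full row rank — this is a special instance of the form appearing in Proposition~\ref{def:regEF_N}, whence $\{E,F\}$ is regular.

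For the converse, I would start from Proposition~\ref{def:regEF_N}: regularity gives $\{E,F\}\sim\left\{\begin{bmatrix}I_d&0\\0&N^{c/r}\end{bmatrix},\begin{bmatrix}\Omega&0\\0&I_a\end{bmatrix}\right\}$ with $N^{c/r}\in\SUT_{c/r}$. Since equivalence is transitive, it suffices to show that the pair $\{N^{c/r},I_a\}$ can be transformed by a block-diagonal equivalence $\{L_{22},K_{22}\}$ into $\{N^{(E_{c/r})},I_a\}$ (block-diagonality with respect to the $d$/$a$ split is what keeps the $\begin{bmatrix}I_d&0\\0&\cdot\end{bmatrix}$, $\begin{bmatrix}\Omega&0\\0&\cdot\end{bmatrix}$ structure intact, by the same mechanism as in Theorem~\ref{th:Uniqueness_SCF}). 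Equivalently, one must find pointwise nonsingular, sufficiently smooth $K_{22}:\mathcal I\to\reals^{a\times a}$ so that $L_{22}:=(K_{22}+N^{c/r}K_{22}')^{-1}$ makes $L_{22}N^{c/r}K_{22}=N^{(E_{c/r})}$ and $L_{22}I_aK_{22}+L_{22}N^{c/r}K_{22}'=I_a$; the second condition is automatic from the definition of $L_{22}$, so the real content is the first. Here I would invoke the results of Appendix~\ref{Appendix_block}: within $\SUT_{column}$ (resp.\ $\SUT_{row}$), any matrix function whose coupling blocks $N_{i,i+1}$ have constant full column (resp.\ row) rank can be brought, by a block-diagonal transformation respecting the $\ell_1,\dots,\ell_\mu$ partition, to the elementary representative in which the coupling blocks are $\begin{bmatrix}I\\0\end{bmatrix}$ (resp.\ $\begin{bmatrix}I&0\end{bmatrix}$) and all other superdiagonal blocks vanish — this is precisely the normalization underlying the elementary forms, and it is carried out block-row by block-row using that each $N_{i,i+1}(t)$ admits a smooth nonsingular completion. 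The fact that $N^{c/r}$ already lies in $\SUT_{c/r}$ guarantees the needed rank hypotheses, and the fact (from \cite{SSCF25}) that one may take the nilpotent matrix constant is what lets the target $N^{(E_{c/r})}$ be constant, though for this proposition even a time-varying elementary $N$ would be acceptable since the statement only asserts the form, not constancy.

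The main obstacle I anticipate is bookkeeping rather than a conceptual difficulty: one must verify that the block-diagonal transformation constructed in the $a$-dimensional block does not disturb the $I_d$ and $\Omega$ blocks, which is exactly the block-decoupling phenomenon recorded in Theorem~\ref{th:Uniqueness_SCF}(ii), and one must confirm that the chosen $K_{22}$ is $C^1$ (so that $K_{22}'$ exists) and pointwise nonsingular on all of $\mathcal I$ — this follows from the smooth, constant-rank hypotheses built into the definition of $\SUT_{c/r}$ and the smoothness assumed throughout. Since all of these ingredients are either proved earlier in the paper, stated in the cited references \cite{commonground2024,SSCF25}, or collected in Appendix~\ref{Appendix_block}, the proof reduces to assembling them, and I would keep it short:

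\begin{proof}
If the displayed equivalence holds, then by definition of the elementary forms each coupling block of $N^{(E_{c/r})}$ has full column rank (in the $N^{(E_c)}$ case) or full row rank (in the $N^{(E_r)}$ case), so $N^{(E_{c/r})}\in\SUT_{c/r}$ and the pair is of the type in Proposition~\ref{def:regEF_N}; hence $\{E,F\}$ is regular.

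Conversely, let $\{E,F\}$ be regular. By Proposition~\ref{def:regEF_N}, $\{E,F\}\sim\left\{\begin{bmatrix}I_d&0\\0&N^{c/r}\end{bmatrix},\begin{bmatrix}\Omega&0\\0&I_a\end{bmatrix}\right\}$ with $N^{c/r}\in\SUT_{c/r}$. By the properties of block-structured matrix functions with decreasing (resp.\ increasing) block sizes collected in Appendix~\ref{Appendix_block}, there is a pointwise nonsingular $K_{22}\in C^1(\mathcal I,\reals^{a\times a})$ such that, with $L_{22}:=(K_{22}+N^{c/r}K_{22}')^{-1}$, one has $L_{22}N^{c/r}K_{22}=N^{(E_{c/r})}$ and $L_{22}(I_aK_{22}+N^{c/r}K_{22}')=I_a$; that is, $\{N^{c/r},I_a\}\overset{L_{22},K_{22}}{\sim}\{N^{(E_{c/r})},I_a\}$. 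Setting $L:=\diag(I_d,L_{22})$, $K:=\diag(I_d,K_{22})$ and using transitivity of $\sim$ yields $\{E,F\}\sim\left\{\begin{bmatrix}I_d&0\\0&N^{(E_{c/r})}\end{bmatrix},\begin{bmatrix}\Omega&0\\0&I_a\end{bmatrix}\right\}$, as claimed.
\end{proof}
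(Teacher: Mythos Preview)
Your overall strategy matches the paper's: both ultimately defer the nontrivial direction to \cite{SSCF25}. The paper's own proof is simply the sentence ``The equivalence of these two possible definitions of regularity has been shown in \cite{SSCF25},'' so your more detailed outline (reduce via Proposition~\ref{def:regEF_N} and then transform the pure-DAE block) is a reasonable elaboration of the same idea.

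There is, however, a precision issue in the key step of your formal proof. You assert that Appendix~\ref{Appendix_block} furnishes $K_{22}$ with $\{N^{c/r},I_a\}\overset{L_{22},K_{22}}{\sim}\{N^{(E_{c/r})},I_a\}$. But the results actually collected there stop short of this: Lemma~\ref{lem:Ncr} normalizes only the secondary-diagonal blocks (to $\begin{bmatrix}R_{i+1,i+1}\\0\end{bmatrix}$, not $\begin{bmatrix}I\\0\end{bmatrix}$, and without killing the higher superdiagonal blocks), and Corollary~\ref{cor:Rcr} then yields only
\[
\{N^{c/r},I_a\}\ \sim\ \{\,N^{(E_{c/r})},\,(R^{c/r})^{-1}\,\}
\]
with $R^{c/r}\in\BUT_{nonsingular}$ --- i.e.\ the QuasiSCF of Definition~\ref{def:QuasiSCF}, not the SSCF. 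The remaining passage $\{N^{(E_{c/r})},(R^{c/r})^{-1}\}\sim\{N^{(E_{c/r})},I_a\}$ (equivalently, eliminating $R$ while keeping $N^{(E_{c/r})}$ fixed) is precisely the nontrivial construction carried out in \cite{SSCF25} and invoked later in the paper as ``Step~4''; it is not a consequence of the appendix lemmas alone. So your proof is correct in spirit but should cite \cite{SSCF25} (not Appendix~\ref{Appendix_block}) for the existence of the required $K_{22}$, or else chain through Corollary~\ref{cor:Rcr} to the QuasiSCF and then explicitly invoke the SSCF construction of \cite{SSCF25} for the final step.
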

The equivalence of these two possible definitions of regularity has been shown in \cite{SSCF25}. With regard to our purposes here,  we show yet another possibility.

\begin{proposition} \label{prof:quasiSCF}
A pair of matrix functions $\{E,F\}$ is regular, iff 
\[
\{E,F\} \ \sim \ \left\{ \begin{bmatrix}
	I_{d} & 0 \\
	0 & N^{(E_{c/r})}
\end{bmatrix}, \begin{bmatrix}
	\Omega & 0 \\
	0 & R
\end{bmatrix} \right\},
\]
with
\begin{itemize}
	\item $\Omega :\mathcal I\rightarrow \reals^{d \times d}$,
	\item  $R :\mathcal I\rightarrow \reals^{a \times a}$, whereas $R$ is block-structured upper triangular with either ascending or descending block sizes corresponding to $N^{(E_{c/r})}$ and nonsingular, i.e., using the notation from Appendix \ref{Appendix_block}, $R \in \BUT_{nonsingular}$.
\end{itemize}

\end{proposition}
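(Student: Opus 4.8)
The plan is to prove Proposition~\ref{prof:quasiSCF} by showing both implications, reducing the nontrivial direction to the already-established characterization via the preceding proposition (the SSCF with elementary nilpotent block $N^{(E_{c/r})}$ and identity block). For the direction ``QuasiSCF $\Rightarrow$ regular'', I would argue that the displayed pair is itself equivalent to an SCF: since $R \in \BUT_{nonsingular}$ has block sizes matching $N^{(E_{c/r})}$, one checks that $N^{(E_{c/r})}R^{-1}$ (or $R^{-1}N^{(E_{c/r})}$, depending on which side one normalizes) is again strictly block upper triangular, hence pointwise nilpotent, so left-multiplying the lower block by $R^{-1}$ and applying the equivalence rule yields a genuine SCF; regularity then follows from Proposition~\ref{def:regEF_N} together with the uniqueness/invariance of the canonical characteristics recalled in \eqref{char}. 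The mild subtlety is that the nilpotent part of the resulting SCF is $N^{(E_{c/r})}R^{-1}$ rather than $N^{(E_{c/r})}$ itself, but it still lies in $\SUT_{c/r}$ because multiplying a full-column-rank (resp.\ full-row-rank) superdiagonal block by an invertible block preserves that rank property, which is exactly what the block-structure lemmas in Appendix~\ref{Appendix_block} are set up to guarantee.

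For the converse, ``regular $\Rightarrow$ QuasiSCF'', I would invoke the Proposition immediately above: regularity gives
\[
\{E,F\} \ \sim \ \left\{ \begin{bmatrix} I_d & 0 \\ 0 & N^{(E_{c/r})} \end{bmatrix}, \begin{bmatrix} \Omega & 0 \\ 0 & I_a \end{bmatrix} \right\},
\]
which is the special case $R = I_a$ of the claimed QuasiSCF form (and $I_a \in \BUT_{nonsingular}$ trivially). So this direction is essentially immediate once the stronger SSCF-type statement is cited. One should remark that the QuasiSCF is strictly more permissive than that SSCF form, so the equivalence of the definitions is what is being asserted; the point of stating it separately is that the extra freedom in $R$ is precisely what the preliminary-stage construction in Section~\ref{sec:preliminary} will exploit.

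The main obstacle — really the only place where care is needed — is verifying the block-triangularity and preserved-rank bookkeeping when absorbing $R^{-1}$ into the nilpotent block, i.e.\ confirming that the product of the elementary nilpotent matrix with a compatibly block-structured invertible upper-triangular matrix again belongs to $\SUT_{c/r}$ with the correct decreasing/increasing block-size pattern. This is a routine computation with the block partition $\ell_1, \ldots, \ell_\mu$, and I would either carry it out directly or cite the corresponding statement in Appendix~\ref{Appendix_block}; the transformation matrices $L$ and $K$ realizing the equivalence are then block-diagonal, $L = \diag(I_d, R^{-1})$ combined with the transformation from Proposition~\ref{def:regEF_N}, and $K = I_m$ on the algebraic block, so no derivative terms $K'$ complicate the nilpotent part. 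No regularity of $K$ beyond pointwise nonsingularity and sufficient smoothness is needed, all of which is inherited from $R$.
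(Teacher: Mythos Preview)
Your proposal is correct. For the direction ``QuasiSCF $\Rightarrow$ regular'' you do exactly what the paper does: take $L=\diag(I_d,R^{-1})$, $K=I_m$, land on $\{\diag(I_d,R^{-1}N^{(E_{c/r})}),\diag(\Omega,I_a)\}$, and invoke Lemma~\ref{lem:R_BUT} to see that $R^{-1}N^{(E_{c/r})}\in\SUT_{c/r}$, hence Proposition~\ref{def:regEF_N} applies.

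For the converse ``regular $\Rightarrow$ QuasiSCF'' you take a genuinely shorter route than the paper. You cite the preceding proposition (the elementary SSCF with $N^{(E_{c/r})}$ and $I_a$) and observe that $R=I_a$ is already a QuasiSCF; done. The paper instead starts from the weaker Proposition~\ref{def:regEF_N} (general $N^{c/r}\in\SUT_{c/r}$ with $I_a$) and then invokes Corollary~\ref{cor:Rcr}/Lemma~\ref{lem:Rcr} to factor $N^{c/r}$ through $N^{(E_{c/r})}$, producing an explicit nontrivial $R=(R^{c/r})^{-1}$; in the row case this forces a nontrivial $K=\diag(I_d,(R^r)^{-1})$ and hence a derivative correction in $L$. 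Your shortcut is logically cleaner once the elementary-$N$ proposition is available; the paper's route is more self-contained (it avoids relying on that stronger statement) and has the side benefit of exhibiting the concrete $R$ that later reappears in the treatment of the T- and S-canonical forms in Section~\ref{sec:T-S-canonical}.
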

\begin{proof}
One direction of the equivalence to Definition \ref{def:regEF_N} follows with
\[
L=\begin{bmatrix}
	I_d & 0 \\
	0 & R^{-1}
\end{bmatrix}, \quad  K=I_m,
\]
 and Lemma \ref{lem:R_BUT}, the other with Corollary \ref{cor:Rcr} and either
\[
L=\begin{bmatrix}
	I_d & 0 \\
	0 & (R^c)^{-1}
\end{bmatrix}, \quad K=I_m,
\]
or
\[
L=\begin{bmatrix}
	I_d & 0 \\
	0 & (I_{a}+N^{(E_{r})}((R^r)^{-1})')^{-1}
\end{bmatrix}, \quad K=\begin{bmatrix}
	I_d & 0 \\
	0 & (R^{r})^{-1}
\end{bmatrix}.
\]

\end{proof}

This motivates the following definition
\begin{definition} \label{def:QuasiSCF}
A  pair 
\[
\left\{  \begin{bmatrix}
	I_d & 0 \\
	0 & N^{(E_{c/r})}
\end{bmatrix} , \begin{bmatrix}
\Omega & 0 \\
0 & R
\end{bmatrix} \right\}
\]
is in QuasiSCF, iff $R \in \BUT_{nonsingular}$, with block sizes corresponding to $N^{(E_{c/r})}$.
\end{definition}

From the proof of Proposition \ref{prof:quasiSCF} it follows that for the   $K$ used to transform an SCF into a QuasiSCF (or vice versa) it holds 
\[
\im K \begin{bmatrix}
	I_d\\
	0
\end{bmatrix} = \im \begin{bmatrix}
	I_d\\
	0
\end{bmatrix}, \quad \im K \begin{bmatrix}
	0\\
	I_a
\end{bmatrix} = \im \begin{bmatrix}
	0\\
	I_a
\end{bmatrix},
\]
such that $S_{can}$, $N_{can}$, $\pPi_{can}$, a pure ODE and a pure DAE can be characterized using the  QuasiSCF instead of an SCF, as we will see later in more detail.

\section{DAEs in PreSCF: A preliminary stage of a QuasiSCF}\label{sec:preliminary}

Our computation of an SCF or even an SSCF we first realize that a transformation into a form
\[
\left\{  \begin{bmatrix}
	I_d & 0 \\
	0 & N^{(E_{c/r})}
\end{bmatrix} , \begin{bmatrix}
F_{11} & F_{12} \\
F_{21} & F_{22}
\end{bmatrix} \right\}
\]
is always possible for regular DAEs, since indeed a transformation into an SSCF exists. However, since an SSCF is difficult to find, we think of transformations that leave the first matrix unchanged, see Appendix \ref{Appendix:Equivalence}, Lemma \ref{lem:M12} and Lemma \ref{lem:M21}. For nonsingular $F_{22}$, they provide the following results that are decisive for the next steps.

\begin{lemma}\label{lem:M12F22}
For every regular  pair in the form
\[
\{E_0,F_0\}=\left\{ \begin{bmatrix}
	I & 0 \\
	0 & N^{(E_{c/r})}
\end{bmatrix}, \begin{bmatrix}
	F_{11}& F_{12} \\
	F_{21} & F_{22}
\end{bmatrix} \right\}, 
\]
with nonsingular $F_{22}$ and the transformation matrix functions
\[
L=\begin{bmatrix}
	I_{d} & -F_{12} F_{22}^{-1} \\
	0 & I_{a}
\end{bmatrix}, \quad 
K=\begin{bmatrix}
	I_{d} & F_{12} F_{22}^{-1}N^{(E_{c/r})} \\
	0 & I_{a}
\end{bmatrix}
\]
it holds
\begin{eqnarray*}
LE_0K&=&E_0, \\ 
LF_0K+LE_0K'&=&\begin{bmatrix}
F_{11} - F_{12} F_{22}^{-1}F_{21}  &    \left(F_{11}-F_{12} F_{22}^{-1}F_{21}+(-F_{12} F_{22}^{-1})' \right) F_{12} F_{22}^{-1} N^{(E_{c/r})}\\F_{21} &   F_{22} + F_{21} F_{12} F_{22}^{-1}N^{(E_{c/r})} 
\end{bmatrix}.
\end{eqnarray*}

\end{lemma}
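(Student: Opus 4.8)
The plan is a direct block-matrix verification; no structural idea is needed, and in fact regularity of $\{E_0,F_0\}$ plays no role here — only the nonsingularity of $F_{22}$ is used. I would first fix abbreviations to keep the algebra readable: write $N:=N^{(E_{c/r})}$, a \emph{constant} matrix so that $N'=0$, and $G:=F_{12}F_{22}^{-1}$, well defined because $F_{22}$ is nonsingular; the one cancellation used repeatedly is $GF_{22}=F_{12}$. In this notation
\[
L=\begin{bmatrix} I_{d} & -G \\ 0 & I_{a} \end{bmatrix},\qquad K=\begin{bmatrix} I_{d} & GN \\ 0 & I_{a} \end{bmatrix},\qquad K'=\begin{bmatrix} 0 & G'N \\ 0 & 0 \end{bmatrix},
\]
the last identity because $N'=0$.

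For the first claim, $E_0K=\begin{bmatrix} I_{d} & GN \\ 0 & N \end{bmatrix}$, and left-multiplying by $L$ the $(1,2)$-block collapses to $GN-GN=0$ while the diagonal blocks are unchanged, so $LE_0K=E_0$. For the second claim I would treat the two summands of $LF_0K+LE_0K'$ separately. Computing $F_0K=\begin{bmatrix} F_{11} & F_{11}GN+F_{12} \\ F_{21} & F_{21}GN+F_{22} \end{bmatrix}$ and then left-multiplying by $L$, using $F_{12}-GF_{22}=0$ in the $(1,2)$-entry, gives
\[
LF_0K=\begin{bmatrix} F_{11}-GF_{21} & (F_{11}-GF_{21})GN \\ F_{21} & F_{21}GN+F_{22} \end{bmatrix},
\]
while $E_0K'=\begin{bmatrix} 0 & G'N \\ 0 & 0 \end{bmatrix}$ is left unchanged by left-multiplication with $L$. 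Adding the two matrices and re-inserting $G=F_{12}F_{22}^{-1}$ reproduces the claimed matrix; in particular its $(1,2)$-block is the contribution $(F_{11}-F_{12}F_{22}^{-1}F_{21})F_{12}F_{22}^{-1}N^{(E_{c/r})}$ from $LF_0K$ together with the derivative term $(F_{12}F_{22}^{-1})'N^{(E_{c/r})}$ from $LE_0K'$.

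There is no genuine obstacle here — the proof is an elementary $2\times2$ block computation — so the only ``hard part'' is not slipping: one must invoke the invertibility of $F_{22}$ at the two cancellations $GN-GN=0$ and $F_{12}-GF_{22}=0$, must use that $N^{(E_{c/r})}$ is constant so that $K'$ carries only the single term $G'N$ (no $N'$-term survives), and must keep the non-square block sizes $F_{11}\in\reals^{d\times d}$, $F_{12}\in\reals^{d\times a}$, $F_{21}\in\reals^{a\times d}$, $F_{22}\in\reals^{a\times a}$ in the correct order when expanding the $(1,2)$-block.
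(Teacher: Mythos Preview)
Your proposal is correct and follows exactly the same approach as the paper, whose proof is simply ``Straight forward computation gives the result.'' One minor remark: your correctly computed $(1,2)$-block, $\bigl((F_{11}-F_{12}F_{22}^{-1}F_{21})F_{12}F_{22}^{-1}+(F_{12}F_{22}^{-1})'\bigr)N^{(E_{c/r})}$, agrees with the formula the paper actually uses in Step~1 but differs from the expression literally printed in the lemma statement, which is a typo there (indeed $F_{11}-F_{12}F_{22}^{-1}F_{21}$ is $d\times d$ while $(-F_{12}F_{22}^{-1})'$ is $d\times a$, so the printed sum is not even well defined unless $d=a$).
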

\begin{proof}
Straight forward computation gives the result.
\end{proof}

\begin{lemma}\label{lem:M21F22}
For every regular  pair in the form
\[
\{E_0,F_0\}=\left\{ \begin{bmatrix}
	I & 0 \\
	0 & N^{(E_{c/r})}
\end{bmatrix}, \begin{bmatrix}
	F_{11}& F_{12} \\
	F_{21} & F_{22}
\end{bmatrix} \right\}, 
\]
with nonsingular $F_{22}$ and the  transformation matrix functions
\[
L=\begin{bmatrix}
	I_{d} & 0 \\
	N^{(E_{c/r})}F_{22}^{-1}F_{21} & I_{a}
\end{bmatrix}, \quad 
K=\begin{bmatrix}
	I_{d} & 0 \\
	-F_{22}^{-1}F_{21} & I_{a}
\end{bmatrix}
\]
it holds
\begin{eqnarray*}
LE_0K&=&E_0, \\
LF_0K+LE_0K'&=&\begin{bmatrix}
	F_{11} - F_{12} F_{22}^{-1}F_{21}& F_{12} \\
	N^{(E_{c/r})}F_{22}^{-1}F_{21} \left(F_{11}    -  F_{12}F_{22}^{-1}F_{21} -N^{(E_{c/r})}(F_{22}^{-1}F_{21})'\right) & F_{22}+N^{(E_{c/r})}F_{22}^{-1}F_{21}F_{12}
\end{bmatrix}.
\end{eqnarray*}
\end{lemma}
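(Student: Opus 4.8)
My plan is to verify the two displayed identities by a direct block-matrix computation, exactly in the spirit of Lemma~\ref{lem:M12F22}; in fact this is the specialization of the general transformation rule recorded in Appendix~\ref{Appendix:Equivalence} (Lemma~\ref{lem:M21}) to $E_0=\diag(I_d,N^{(E_{c/r})})$ together with the particular choice of lower-left block $-F_{22}^{-1}F_{21}$ in $K$, which is designed so as to annihilate the $(2,1)$-block of $F_0$. For brevity I would write $N:=N^{(E_{c/r})}$ and $G:=F_{22}^{-1}F_{21}$. Since the coefficients are at least $C^1$ and $F_{22}$ is pointwise nonsingular, $G$ is $C^1$, so $K$ is an admissible transformation matrix function; moreover both $L$ and $K$ are pointwise nonsingular (they are block triangular with identity diagonal blocks), so $L,K$ indeed define an equivalence transformation and it only remains to check the two matrix identities.

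First I would record the single derivative that occurs, namely $K'=\begin{bmatrix} 0 & 0\\ -G' & 0\end{bmatrix}$. For the first identity I compute $E_0K=\begin{bmatrix} I_d & 0\\ -NG & N\end{bmatrix}$ and hence $LE_0K=\begin{bmatrix} I_d & 0\\ NG-NG & N\end{bmatrix}=E_0$; the point is simply that the lower-left block $NG$ of $L$ is chosen precisely as $N$ times the negative of the lower-left block of $K$, so that premultiplication by $L$ cancels the spurious block $-NG$ produced by $E_0K$. Note that here neither the nilpotency nor even the constancy of $N$ is used, since $K$ contains no occurrence of $N$.

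For the second identity I compute $F_0K=\begin{bmatrix} F_{11}-F_{12}G & F_{12}\\ F_{21}-F_{22}G & F_{22}\end{bmatrix}=\begin{bmatrix} F_{11}-F_{12}G & F_{12}\\ 0 & F_{22}\end{bmatrix}$, using $F_{21}-F_{22}G=0$; premultiplying by $L$ refills the second block row and gives $LF_0K=\begin{bmatrix} F_{11}-F_{12}G & F_{12}\\ NG(F_{11}-F_{12}G) & F_{22}+NGF_{12}\end{bmatrix}$. Finally $E_0K'=\begin{bmatrix} 0 & 0\\ -NG' & 0\end{bmatrix}$ and $LE_0K'=\begin{bmatrix} 0 & 0\\ -NG' & 0\end{bmatrix}$, so $LF_0K+LE_0K'$ differs from $LF_0K$ only in the $(2,1)$-block, which becomes $NG(F_{11}-F_{12}G)-NG'$; rewriting $G=F_{22}^{-1}F_{21}$ reproduces the asserted matrix, the term $-N^{(E_{c/r})}(F_{22}^{-1}F_{21})'$ in the $(2,1)$-entry being exactly the contribution of the derivative term $LE_0K'$. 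I do not expect any real obstacle here: the computation is routine, and the only things that need care are keeping the factor $N=N^{(E_{c/r})}$ in front of $F_{22}^{-1}F_{21}$ (and of $(F_{22}^{-1}F_{21})'$ in the derivative term) and not dropping the $LE_0K'$-contribution to the $(2,1)$-block; also, regularity of $\{E_0,F_0\}$ is not actually needed for the two identities, only the nonsingularity of $F_{22}$.
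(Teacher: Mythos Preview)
Your proposal is correct and follows exactly the paper's approach: the paper's proof reads in its entirety ``Straight forward computation gives the result,'' and your block-by-block verification is precisely that computation (indeed, it is the specialization of Lemma~\ref{lem:M21} with $E_{22}=N^{(E_{c/r})}$ and $M_{21}=F_{22}^{-1}F_{21}$, as you observe). Your final $(2,1)$-block $NG(F_{11}-F_{12}G)-NG'$ agrees with the form $N^{(E_{c/r})}\bigl(F_{22}^{-1}F_{21}(F_{11}-F_{12}F_{22}^{-1}F_{21})-(F_{22}^{-1}F_{21})'\bigr)$ used later in Step~2 of Section~\ref{sec:Procedure}; the placement of the closing parenthesis in the displayed statement appears to be a typo, since as written the bracketed expression would not be dimensionally consistent.
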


\begin{proof}
Straight forward computation gives the result.
\end{proof}

It is easy to recognize now that $N^{(E_{c/r})}$ is introduced in expressions of the second matrix function, such that repeating such transformations  makes it possible to take advantage of the nilpotency. 
Indeed, these two lemmas permit an iterative computation of a QuasiSCF if in all steps $F_{22} \in \BUT_{nonsingular}$ is given and the nilpotency of the matrix $N^{(E_{c/r})}$ guarantees a finite number of iterations, see Section \ref{sec:Procedure}.
 To ensure the feasibility, we again use the block structures introduced above, see Figures \ref{fig:Bilder_col} and \ref{fig:Bilder_row}. Some properties of all these matrix functions and in particular the set of nonsingular matrix functions $\BUT_{nonsingular} \subset \BUT$  are summarized in Appendix \ref{Appendix_block}.

\begin{figure}
\includegraphics[width=\textwidth]{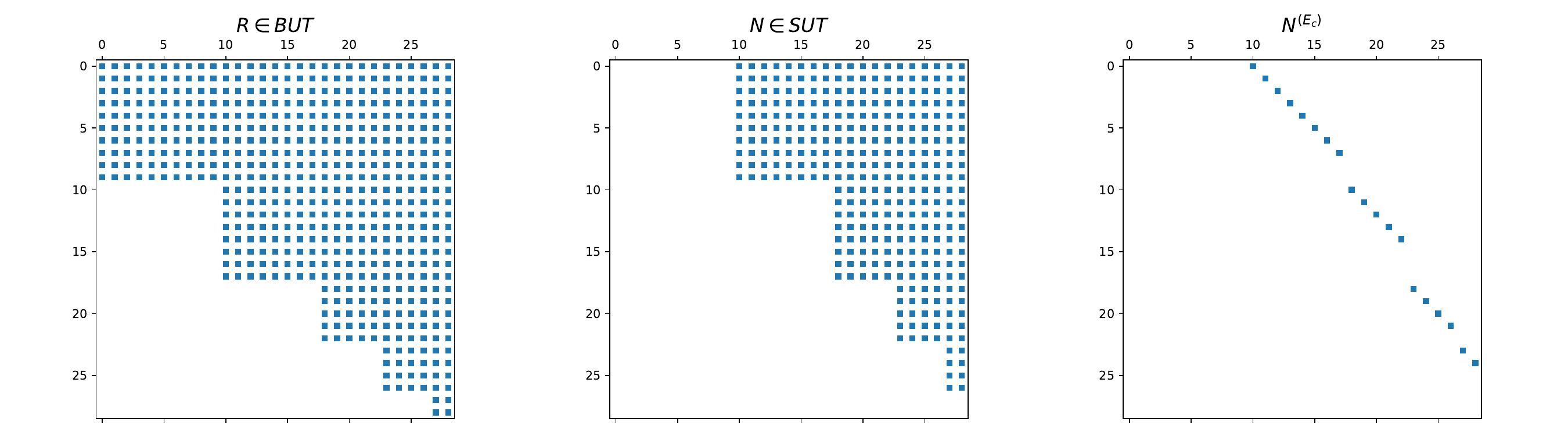}
\caption{Visualization of the block structure for decreasing block sizes $\ell_1=10, \ell_2=8, \ell_3=5, \ell_4=4, \ell_5=2$.}
\label{fig:Bilder_col}
\end{figure} 

\begin{figure}
\includegraphics[width=\textwidth]{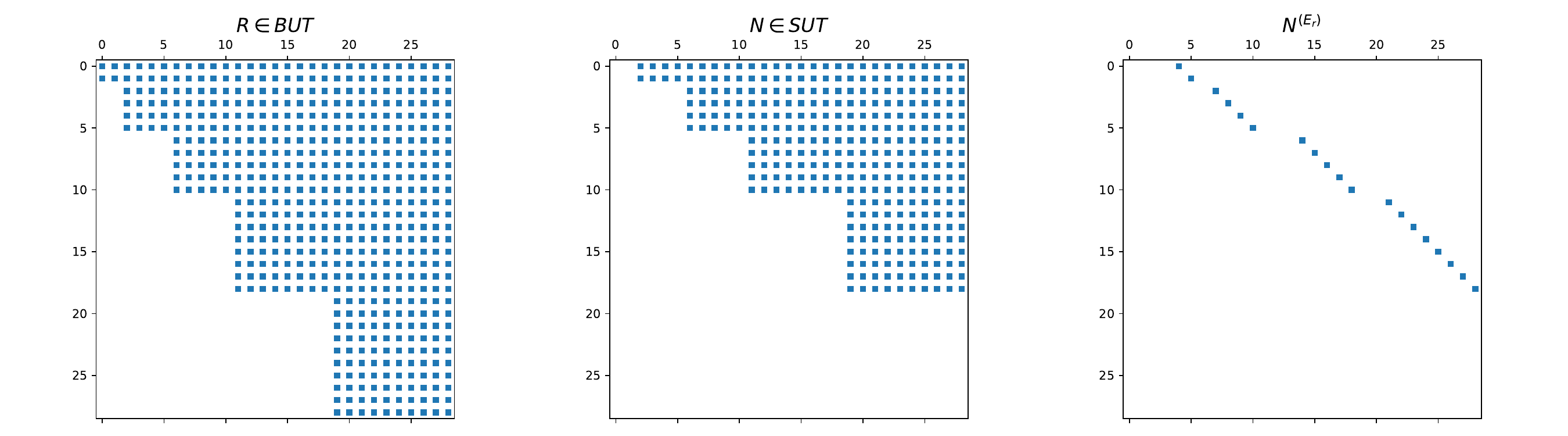}
\caption{Visualization of the block structure for increasing block sizes $\ell_1=2, \ell_2=4, \ell_3=5, \ell_4=8, \ell_5=10$.}
\label{fig:Bilder_row}
\end{figure} 

\begin{definition} \label{def:prel_SCF}
A  pair 
\[
\left\{  \begin{bmatrix}
	I_d & 0 \\
	0 & N^{(E_{c/r})}
\end{bmatrix} , \begin{bmatrix}
F_{11} & F_{12} \\
F_{21} & F_{22}
\end{bmatrix} \right\}
\]
is in a preliminary stage of an SCF (PreSCF) iff 
$F_{22}$ and $F_{21}F_{12}$ are block upper triangular matrices with the same block sizes as $N^{(E_{c/r})}$ and $F_{22}$ is nonsingular.\\

In short, this means  $F_{22} \in \BUT_{nonsingular}$,  $F_{21}F_{12}\in \BUT$  in terms of the notation from Appendix \ref{Appendix_block}.
\end{definition}
For DAEs in a PreSCF the algorithm below will produce an SCF after a finite number of steps due to Lemmas \ref{lem:M12F22} and \ref{lem:M21F22}, since all involved pairs of matrix functions have nonsingular $F_{22}$ thanks to the structural properties.

\section{A subsequent  procedure to compute  an SSCF}\label{sec:Procedure}

To obtain an SSCF we consider several transformation steps with $E_0=E_1=E_2=E_4$:
\[
\left\{E, F \right\} \ 
\underset{\overset{\longrightarrow}{\mbox{\small{Step 0}}}}{\overset{L_0, K_0}{\sim}} \ \underbrace{\left\{E_0, F_0 \right\}}_{PreSCF}\ 
\underset{\overset{\longrightarrow}{\mbox{\small{Step 1}}}}{\overset{L_1, K_1}{\sim}} \ 
\underbrace{\left\{E_1, F_1 \right\}}_{(F_1)_{12}=0} \ 
\underset{\overset{\longrightarrow}{\mbox{\small{Step 2}}}}{\overset{L_2, K_2}{\sim}} \ 
\underbrace{\left\{E_2, F_2 \right\}}_{QuasiSCF} \ 
\underset{\overset{\longrightarrow}{\mbox{\small{Step 3}}}}{\overset{L_3, K_3}{\sim}} \ 
\underbrace{\left\{E_3, F_3 \right\}}_{SCF} \ 
\underset{\overset{\longrightarrow}{\mbox{\small{Step 4}}}}{\overset{L_4, K_4}{\sim}} \ 
\underbrace{\left\{E_4, F_4 \right\}}_{SSCF}.
\]
The transformation matrices $L_0, K_0$ depend on the structure of the given DAE. Here, we discuss them for some particular cases.
\medskip
 
Assuming  that a  pair of matrix functions is in a PreSCF, the transformation matrices $L_i, K_i$, $i=1,2,3,4$ are the ones used in the $i$-th Step according to the notation below, and  result systematically from further intermediate steps.

For Steps 1 and 2, we use the two elementary equivalence transformations that preserve the structure of the first constant matrix from Lemmas \ref{lem:M12F22} and \ref{lem:M21F22} and  take advantage of the nilpotency in iterations.

\subsection{Step 1: Obtaining a pure ODE part}

In this step we construct the equivalence transformation
\[
\underbrace{\left\{E_0, F_0 \right\}}_{PreSCF}\ \overset{L_1, K_1}{\sim} \ \left\{E_1, F_1 \right\}, \quad \mbox{with} \quad (F_1)_{12}=0.
\]

To this end, we use Lemma \ref{lem:M12F22} to construct a sequence $F^{(k)}$, $k\in \N$, with $F_{12}^{(k)} \rightarrow 0 $, $F^{(0)}=F_0$. In a first iteration we obtain
\[
 F_{12}^{(1)}=((F_{11}-F_{12}F_{22}^{-1}F_{21})F_{12}F_{22}^{-1}+(F_{12}F_{22}^{-1})')N^{(E_{c/r})},\quad F_{22}^{(1)}=F_{22}+F_{21}F_{12}F_{22}^{-1}N^{(E_{c/r})}.
\]
We observe two aspects:
\begin{itemize}
	\item Since $F_{21}F_{12}F_{22}^{-1}N^{(E_{c/r})}$ is strictly upper block triangular, the diagonal blocks of  $F_{22}$ are preserved in $F_{22}^{(1)}$, such that it is nonsingular.
	\item $F_{12}^{(1)}$ contains at the end the product with a nilpotent matrix. Therefore, repeating this procedure $F_{12}^{(\mu)}=0$ and $R:=F_{22}^{(\mu)}$, $\Omega:=F_{11}^{(\mu)}$.
\end{itemize}
Therefore,
\[
\left\{E_1, F_1 \right\} =\left\{ \begin{bmatrix}
		I_d & 0\\
		0 & N^{(E_{r/c})}
	\end{bmatrix} , \begin{bmatrix}
		\Omega & 0\\
		F_{21} & R
	\end{bmatrix}\right\}.
\]
We see that with this step we
 already obtained a pure ODE.

\subsection{Step 2: Obtaining a pure DAE part}

In this step we construct the equivalence transformation
\[
\left\{E_1, F_1 \right\} \ \overset{L_2, K_2}{\sim} \ \underbrace{\left\{E_2, F_2 \right\}}_{QuasiSCF}.
\]

We use Lemma \ref{lem:M21F22} to construct a sequence $F^{(k)}$, $k \in \N$ with $F_{21}^{(k)} \rightarrow 0 $, $F^{(0)}=F_1$. Let us first note that for $F^{(0)}=F_0$ a first iteration would lead to
\[
F_{21}^{(1)}=N^{(E_{c/r})}(F_{22}^{-1}F_{21}(F_{11}-F_{12}F_{22}^{-1}F_{21})-(F_{22}^{-1}F_{21})'),\quad F_{22}^{(1)}=F_{22}+N^{(E_{c/r})} F_{22}^{-1}F_{21}F_{12}.
\]
Now, we observe that
\begin{itemize}
  \item Since $N^{(E_{c/r})} F_{22}^{-1}F_{21}F_{12}$ is strictly upper block triangular, the diagonal blocks of  $F_{22}$ are preserved in $F_{22}^{(1)}$, such that it is nonsingular.
	\item $F_{21}^{(1)}$ starts with the product with a nilpotent matrix. Therefore, repeating this procedure $F_{21}^{(\mu)}=0$.
	\item If Step 2 is done after Step 1, i.e., $F^{(0)}=F_1$, then $F_{12}^{(j)}=0$,  $F_{22}^{(j)}=R$, $F_{11}^{(j)}=\Omega$ remain unchanged for $j=0,1, \ldots, \mu$.
\end{itemize}
 Note that the order of Step 1 and Step 2 could also have been inverted. We prefer this order, since then we first obtain a pure ODE with the final $\Omega$.

\subsection{Steps 3 and 4: Computation of an SCF and an SSCF}
As soon as we have a representation of the form 
\[
E_2= \begin{bmatrix}
I_d & 0 \\
0 & N^{(E_{c/r})}
\end{bmatrix}, \quad F_2=\begin{bmatrix}
\Omega & 0 \\
0 & R
\end{bmatrix}\,
\]
for a nonsingular block upper triangular $R$, with 
\[
L_3 = \begin{bmatrix}
I_d & 0 \\
0 & R^{-1}
\end{bmatrix}, \quad K_3=I_m
\]
we obtain an SCF. In a last step, if desired, an SSCF can be computed  with particular $L_{SSCF}$ and $K_{SSCF}$ for the pure DAE as described in \cite{SSCF25} and
\[
L_4 = \begin{bmatrix}
I_d & 0 \\
0 & L_{SSCF}
\end{bmatrix}, \quad K_4 = \begin{bmatrix}
I_d & 0 \\
0 & K_{SSCF}
\end{bmatrix}.
\]

Since $K_3$ and $K_4$ transform algebraic variables only, they are not crucial for the canonical spaces and therefore we will not go into them any further, cf. Section \ref{sec:Pican}.

\subsection{A detailed example}

We consider the following example to illustrate on the one hand all the described steps and on the other hand the importance of the regularity requirement.
\begin{example} \label{ex:BergIl}
 \cite[Example 5.6.]{BergerIlchmann} 
\[
E=
\left[\begin{matrix}\sin{\left(t \right)} & \cos{\left(t \right)} & 0\\0 & 0 & 0\\- \frac{\sin{\left(2 t \right)}}{2} & \sin^{2}{\left(t \right)} & 0\end{matrix}\right]
,\quad 
F=
\left[\begin{matrix}- \sin{\left(t \right)} + \cos{\left(t \right)} & - \sin{\left(t \right)} - \cos{\left(t \right)} & 0\\\cos{\left(t \right)} & - \sin{\left(t \right)} & 0\\\sin^{2}{\left(t \right)} & \frac{\sin{\left(2 t \right)}}{2} & - t^{2} - 1\end{matrix}\right].
\]
In \cite{BergerIlchmann} this example is used to illustrate their transformation into SCF that does not presuppose regularity,  but real analytic $E,F$. Indeed, the nilpotent matrix of the SCF found there has rank drops at zeros of $\sin$, just like the matrix $E$.
 Therefore it is obvious that on $\reals$ no equivalent transformation into an SSCF  is possible. 
\medskip

Here, for regularity we require 
$
\mathcal I \subset \reals \setminus \{ n\pi \mid n \in \mathbb{Z} \} 
$,
such that $m=3, \mu=2, r=2, \theta_0=1, d=1, a=2$ on $\mathcal I$, $N^{(E_c)}=N^{(E_r)}=\begin{bmatrix}
	0 & 1\\ 0 &0
\end{bmatrix}$.
\medskip

Step 0:  With

\[
L_0=
\left[\begin{matrix}1 & -1 - \frac{\cos{\left(t \right)}}{\sin{\left(t \right)}} & 0\\\cos{\left(t \right)} & 0 & 1\\0 & 1 & 0\end{matrix}\right]
,\quad 
K_0=
\left[\begin{matrix}\frac{1}{\sin{\left(t \right)}} & 0 & - \frac{\cos{\left(t \right)}}{\sin{\left(t \right)}}\\0 & 0 & 1\\0 & 1 & 0\end{matrix}\right]
,\quad 
\]
we obtain the required form of a PreSCF:
\[
E_0=
\left[\begin{array}{c|cc}1 & 0 & 0\\
\hline
0 & 0 & 1\\0 & 0 & 0\end{array}\right]
,\quad 
F_0=
\left[\begin{array}{c|cc}\frac{\sin{\left(2 t \right)} + 2}{\cos{\left(2 t \right)} - 1} & 0 & \frac{\sqrt{2} \sin{\left(t + \frac{\pi}{4} \right)}}{\sin^{2}{\left(t \right)}}\\
\hline
- \cos{\left(t \right)} + \frac{1}{\sin{\left(t \right)}} & - t^{2} - 1 & - \frac{\cos{\left(t \right)}  }{\sin{\left(t \right)}}\\\frac{\cos{\left(t \right)}  }{\sin{\left(t \right)}} & 0 & - \frac{1}{\sin{\left(t \right)}}\end{array}\right]
.
\]

Step 1:
\begin{enumerate}

\item First (and in this case last) iteration step:
\[
L_{1}^{(1)}=
\left[\begin{array}{c|cc}1 & 0 & 1 + \frac{\cos{\left(t \right)}}{\sin{\left(t \right)}}\\
\hline
0 & 1 & 0\\0 & 0 & 1\end{array}\right]
,\quad 
K_{1}^{(1)}=
\left[\begin{array}{c|cc}1 & 0 & 0\\
\hline 
0 & 1 & 0\\0 & 0 & 1\end{array}\right]
,\quad 
\]
$E_1:=E_1^{(1)}=E_0$, $L_1:=L_{1}^{(1)}$, $K_1:=K_{1}^{(1)}$,
\[
F_1:=F_{1}^{(1)}=
\left[\begin{array}{c|cc}
-1 & 0 & 0 \\
\hline
- \cos{\left(t \right)} + \frac{1}{\sin{\left(t \right)}} & - t^{2} - 1 & - \frac{\cos{\left(t \right)}  }{\sin{\left(t \right)}}
\\
\frac{\cos{\left(t \right)}  }{\sin{\left(t \right)}} & 0 & - \frac{1}{\sin{\left(t \right)}}
\end{array}\right]
.
\]
\end{enumerate}
Step 2:
\begin{enumerate}
	\item First iteration step:
\[
L_{2}^{(1)}=
\left[\begin{array}{c|cc}1 & 0 & 0\\
\hline
- \cos{\left(t \right)} & 1 & 0\\0 & 0 & 1\end{array}\right]
,\quad 
K_{2}^{(1)}=
\left[\begin{array}{c|cc}1 & 0 & 0\\
\hline
- \frac{\sqrt{2} \cos{\left(t + \frac{\pi}{4} \right)}}{t^{2} + 1} & 1 & 0\\\cos{\left(t \right)} & 0 & 1\end{array}\right]
,\quad 
\]
$E_{2}^{(1)}=E_1=E_0$ and
\[
F_{2}^{(1)}=
\left[\begin{array}{c|cc}-1 & 0 & 0\\
\hline
\sqrt{2} \cos{\left(t + \frac{\pi}{4} \right)} & - t^{2} - 1 & - \frac{\cos{\left(t \right)}  }{\sin{\left(t \right)}}
\\
0 & 0 & - \frac{1}{\sin{\left(t \right)}}\end{array}\right]
.
\]
\item Second (and last) iteration step:
\[
L_{2}^{(2)}=
\left[\begin{array}{c|cc}1 & 0 & 0\\
\hline
0 & 1 & 0\\0 & 0 & 1\end{array}\right]
,\quad 
K_{2}^{(2)}=
\left[\begin{array}{c|cc}1 & 0 & 0\\
\hline
\frac{\sqrt{2} \cos{\left(t + \frac{\pi}{4} \right)}}{t^{2} + 1} & 1 & 0\\0 & 0 & 1\end{array}\right]
,\quad 
\]
$E_2:=E_2^{(2)}=E_{2}^{(1)}=E_1=E_0$, $L_2:=L_{2}^{(2)}L_{2}^{(1)}$, $K_2:=K_{2}^{(1)}K_{2}^{(2)}$, results in a QuasiSCF with
\[
F_2:=F_{2}^{(2)}=
\left[\begin{array}{c|cc}-1 & 0 & 0\\
\hline
0 & - t^{2} - 1 & - \frac{\cos{\left(t \right)}  }{\sin{\left(t \right)}}
\\
0 & 0 & - \frac{1}{\sin{\left(t \right)}}\end{array}\right]
.
\]
\end{enumerate}
Step 3:
\[
L_{3}=
\left[\begin{array}{c|cc}1 & 0 & 0\\
\hline
0 & - \frac{1}{t^{2} + 1} & \frac{\cos{\left(t \right)}}{t^{2} + 1}\\0 & 0 & - \sin{\left(t \right)}\end{array}\right], \quad K_3=I_3,
\]
leads to the SCF
\[
E_3 = E_0=\left[\begin{array}{c|cc}1 & 0 & 0\\
\hline
0 & 0 & - \sin{\left(t \right)}\\0 & 0 & 0\end{array}\right] , \quad F_3=
\left[\begin{array}{c|cc}-1 & 0 & 0\\
\hline
0 & 1 & 0\\0 & 0 & 1\end{array}\right].
\]

Step 4:
Finally, with
\[
L_4=\left[\begin{array}{c|cc}1 & 0 & 0\\
\hline
0 & - \frac{1}{\sin{\left(t \right)}} & 0\\0 & 0 & 1\end{array}\right] , \quad K_4=
\left[\begin{array}{c|cc}1 & 0 & 0\\
\hline
0 & - \sin{\left(t \right)} & 0\\0 & 0 & 1\end{array}\right],
\]
we obtain the SSCF
\[
E_4 = E_0=\left[\begin{array}{c|cc}1 & 0 & 0\\
\hline
0 & 0 & 1\\0 & 0 & 0\end{array}\right] , \quad F_4=
\left[\begin{array}{c|cc}-1 & 0 & 0\\
\hline
0 & 1 & 0\\0 & 0 & 1\end{array}\right].
\]
\medskip
Note that for the homogenous DAE the pure ODE $u'-u=0$, 
also given in \cite{BergerIlchmann}, is found already after Step 1.  However, it is not unique and different $L_0, K_0$ may provide different pure ODEs. We illustrate and discus this in Example \ref{ex:HMM98}.

\end{example}
In the following examples, for shortness, we will not specify the intermediate iteration steps of Step 1 and Step 2 anymore.


\section{Consequences for the canonical projector and pure ODEs} \label{sec:Pican}
If $K:=K_0 K_1 K_2 K_3 K_4$ is the transformation of the unknowns that was used above to transform an arbitrary DAE into an SSCF, then the canonical projector
\[
\pPi_{can}: =K\begin{bmatrix}
I_d & 0 \\
0 &0
\end{bmatrix} K^{-1} 
\]
can be computed as
\[
\pPi_{can}: =K_0 K_1 K_2\begin{bmatrix}
I_d & 0 \\
0 &0
\end{bmatrix} K_2^{-1} K_1^{-1}K_0^{-1},  
\]
since $K_3$ and $K_4$ transform algebraic variables only. Therefore, these two last steps will not be that important for latter considerations, such that we will focus on Steps 0, 1 and 2.

\begin{proposition}\label{prop:Pican}
For  every sufficiently smooth pair of matrix functions in a PreSCF,
\[
\left\{ \begin{bmatrix}
		I_d & 0\\
		0 & N^{(E_{r/c})}
	\end{bmatrix} , \begin{bmatrix}
		F_{11} & F_{12}\\
		F_{21} & F_{22}
	\end{bmatrix}\right\},
\]
the canonical spaces are
\[
S_{can}=\im \begin{bmatrix}
	I_d + AB  \\
	B 
\end{bmatrix}, \quad N_{can}=\im \begin{bmatrix}
	A  \\
	I_{a}
\end{bmatrix},
\]
and the canonical projector
$
\pPi_{can} $
is
\[
\pPi_{can} = \begin{bmatrix}
	I_d + AB & -A-ABA \\
	B & -BA
\end{bmatrix}
\]
for $A:=\sum_{i=1}^{\mu} A_i : \mathcal I\rightarrow \reals^{d \times a}$, $B:=\sum_{j=1}^{\mu} B_j : \mathcal I\rightarrow \reals^{ a\times d}$, resulting from
\begin{eqnarray*}
F^{(0)}&=& \begin{bmatrix}
		F_{11} & F_{12}\\
		F_{21} & F_{22}
	\end{bmatrix}, \\
	F^{(i)}&=&  \begin{bmatrix}
	I_d & -F_{12}^{(i-1)} (F_{22}^{(i-1)})^{-1} \\
	0 & I_{a}
\end{bmatrix} F^{(i-1)} \begin{bmatrix}
	I_d & A_{i} \\
	0 & I_{a}
\end{bmatrix}\\
&&+ \begin{bmatrix}
	0 & A_{i}' \\
	0 & 0
\end{bmatrix} , \quad i=1, \ldots, \mu \\
F^{(\mu+j)}&=&  \begin{bmatrix}
	I_d & 0 \\
	-(F_{22}^{(\mu+j-1)})^{-1} F_{21}^{(\mu+j-1)}N^{(E_{c/r})} & I_{a}
\end{bmatrix} F^{(\mu+j-1)} \begin{bmatrix}
	I_d & 0 \\
	B_j & I_{d}
\end{bmatrix}\\
&&+ 
	\begin{bmatrix}
	0 & 0 \\
	N^{(E_{c/r})}B_j' & 0
\end{bmatrix}  , \quad j=1, \ldots, \mu
\end{eqnarray*}
for
\[
A_i=F_{12}^{(i-1)} (F_{22}^{(i-1)})^{-1} N^{(E_{c/r})}, \quad B_j= (F_{22}^{(\mu+j-1)})^{-1}  F_{21}^{(\mu+j-1)}.
\]
\end{proposition}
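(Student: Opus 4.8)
The plan is to track the transformation matrices $K_1$ and $K_2$ produced by the iterations of Step 1 and Step 2 (as described in Sections 4.1 and 4.2), compute their product with the block projector $\diag(I_d,0)$, and simplify using nilpotency. First I would record that, by Lemma \ref{lem:M12F22}, the $i$-th iteration of Step 1 uses a transformation matrix of the block form $\begin{bmatrix} I_d & A_i \\ 0 & I_a \end{bmatrix}$ with $A_i = F_{12}^{(i-1)}(F_{22}^{(i-1)})^{-1} N^{(E_{c/r})}$; since these are all block upper unitriangular, their product is $\begin{bmatrix} I_d & A \\ 0 & I_a \end{bmatrix}$ with $A = \sum_{i=1}^\mu A_i$. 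The number of factors is exactly $\mu$ because, as observed in Section 4.1, $F_{12}^{(i)}$ always carries $N^{(E_{c/r})}$ as a right factor, so after $\mu$ steps the off-diagonal block vanishes and the pair is in the QuasiSCF-like shape with $R = F_{22}^{(\mu)}$, $\Omega = F_{11}^{(\mu)}$. The recursion for $F^{(i)}$ stated in the proposition is nothing but the explicit form of the equivalence transformation $LF^{(i-1)}K + LE^{(i-1)}K'$ with $L = \begin{bmatrix} I_d & -F_{12}^{(i-1)}(F_{22}^{(i-1)})^{-1} \\ 0 & I_a \end{bmatrix}$ and $K = \begin{bmatrix} I_d & A_i \\ 0 & I_a \end{bmatrix}$, so $K' = \begin{bmatrix} 0 & A_i' \\ 0 & 0 \end{bmatrix}$ contributes the extra additive term; this is a direct reading of Lemma \ref{lem:M12F22}.

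Next I would do the symmetric computation for Step 2. By Lemma \ref{lem:M21F22}, after Step 1 the iterations of Step 2 use transformation matrices $\begin{bmatrix} I_d & 0 \\ B_j & I_a \end{bmatrix}$ (block lower unitriangular) with $B_j = (F_{22}^{(\mu+j-1)})^{-1} F_{21}^{(\mu+j-1)}$, the relevant $L$ being $\begin{bmatrix} I_d & 0 \\ -(F_{22}^{(\mu+j-1)})^{-1}F_{21}^{(\mu+j-1)} N^{(E_{c/r})} & I_a \end{bmatrix}$; again nilpotency of $N^{(E_{c/r})}$ forces termination after $\mu$ steps (the argument of Section 4.2 that $F_{21}^{(j)}$ always has $N^{(E_{c/r})}$ as a left factor), and Step 1's diagonal data $\Omega, R$ are left untouched throughout, per the third bullet of Section 4.2. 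The product of these factors is $\begin{bmatrix} I_d & 0 \\ B & I_a \end{bmatrix}$ with $B = \sum_{j=1}^\mu B_j$. Hence $K_1 K_2 = \begin{bmatrix} I_d & A \\ 0 & I_a \end{bmatrix}\begin{bmatrix} I_d & 0 \\ B & I_a \end{bmatrix} = \begin{bmatrix} I_d + AB & A \\ B & I_a \end{bmatrix}$, and I would then compute its inverse — which is $\begin{bmatrix} I_d & -A \\ -B & I_a + BA \end{bmatrix}$, as one checks directly, or equivalently by inverting the two unitriangular factors in reverse order.

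Finally I would assemble $\pPi_{can} = K_1 K_2 \begin{bmatrix} I_d & 0 \\ 0 & 0 \end{bmatrix} (K_1 K_2)^{-1}$, using the remark in Section 5 that $K_0$ drops out for the PreSCF-level statement (we are describing the canonical objects of the pair already in PreSCF, so $K_0 = I_m$ effectively) and that $K_3, K_4$ act on algebraic variables only and so do not affect $\pPi_{can}$, $S_{can}$, $N_{can}$. Multiplying out,
\[
\pPi_{can} = \begin{bmatrix} I_d + AB & A \\ B & I_a \end{bmatrix}\begin{bmatrix} I_d & 0 \\ 0 & 0 \end{bmatrix}\begin{bmatrix} I_d & -A \\ -B & I_a + BA \end{bmatrix} = \begin{bmatrix} I_d + AB \\ B \end{bmatrix}\begin{bmatrix} I_d & -A \end{bmatrix},
\]
which gives exactly $\begin{bmatrix} I_d + AB & -A - ABA \\ B & -BA \end{bmatrix}$; the column space and null space read off as $S_{can} = \im \begin{bmatrix} I_d + AB \\ B \end{bmatrix}$ and $N_{can} = \ker \begin{bmatrix} I_d & -A \end{bmatrix} = \im \begin{bmatrix} A \\ I_a \end{bmatrix}$, matching the claim. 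The one point that needs genuine care, rather than routine bookkeeping, is the termination-after-$\mu$-steps argument together with the claim that the factorization really collapses to the single unitriangular blocks $\begin{bmatrix} I_d & A \\ 0 & I_a\end{bmatrix}$ and $\begin{bmatrix} I_d & 0 \\ B & I_a\end{bmatrix}$: one must verify that at every intermediate stage the hypotheses of Lemmas \ref{lem:M12F22} and \ref{lem:M21F22} remain in force, i.e. that $F_{22}^{(k)}$ stays nonsingular (guaranteed because the correction terms are strictly block upper triangular, so the diagonal blocks of $F_{22}$ are preserved) and that the PreSCF structure is not destroyed — this is where the block-triangularity bookkeeping of Appendix \ref{Appendix_block} is actually used, and it is the main obstacle to writing the proof cleanly.
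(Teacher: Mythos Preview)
Your proposal is correct and follows essentially the same approach as the paper: compute $K_1K_2=\begin{bmatrix}I_d & A\\0 & I_a\end{bmatrix}\begin{bmatrix}I_d & 0\\B & I_a\end{bmatrix}$ from the unitriangular factors of Steps~1 and~2, invert it, and conjugate $\diag(I_d,0)$. Your write-up is in fact more detailed than the paper's (which simply records $K_{1-2}$ and its inverse and leaves the rest implicit), in particular your explicit justification that $F_{22}^{(k)}$ stays nonsingular at each step and your rank-one-block factorization $\Pi_{can}=\begin{bmatrix}I_d+AB\\B\end{bmatrix}\begin{bmatrix}I_d & -A\end{bmatrix}$ to read off $S_{can}$ and $N_{can}$.
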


\begin{proof}
The assertion follows from
\begin{eqnarray*}
K_{1-2}&:=&K_1 K_2=\begin{bmatrix}
	I_d & A_{1} \\
	0 & I_{a}
\end{bmatrix} \cdots \begin{bmatrix}
	I_d & A_{\mu} \\
	0 & I_{a}
\end{bmatrix}\cdot \begin{bmatrix}
	I_d & 0 \\
	B_{1} & I_{a}
\end{bmatrix} \cdots \begin{bmatrix}
	I_d & 0 \\
	B_{\mu} & I_{a}
\end{bmatrix} \\
&=& \begin{bmatrix}
	I_d & A \\
	0 & I_{a}
\end{bmatrix}\cdot \begin{bmatrix}
	I_d & 0 \\
	B & I_{a}
\end{bmatrix}=\begin{bmatrix}
	I_d+AB & A \\
	B & I_{a}
\end{bmatrix} 
\end{eqnarray*}
and
\[
K^{-1}_{1-2}:=K_2^{-1}K_1^{-1}= \begin{bmatrix}
	I_d & 0 \\
	-B & I_{a}
\end{bmatrix}  \begin{bmatrix}
	I_d & -A\\
	0 & I_{a}
\end{bmatrix}=\begin{bmatrix}
	I_d & -A\\
	-B & BA+ I_{a}
\end{bmatrix}.
\]
\end{proof}

Note that this Proposition enables us to compute in general
\[
\pPi_{can}= K_0 \begin{bmatrix}
	I_d + AB & -A-ABA \\
	B & -BA
\end{bmatrix}K_0^{-1}
\]
directly from $K_0$, $K_1$, $K_2$, since $A=(K_1)_{12}=(K_1K_2)_{12}$, $B=(K_2)_{21}=(K_1K_2)_{21}$. 
Moreover, if needed, a closer look provides the precise smoothness assumptions required for parts of $F_0$ that we will not go into here.

From the above proof we recognize that, as soon as we finish Step 1, a pure ODE can be represented, since it means that
for every pair
\[
\left\{E_1, F_1 \right\} =\left\{ \begin{bmatrix}
		I_d & 0\\
		0 & N^{(E_{r/c})}
	\end{bmatrix} , \begin{bmatrix}
		F_{11} & 0\\
		F_{21} & F_{22}
	\end{bmatrix}\right\},
\]
it holds
\[
\left\{E_1, F_1 \right\} \overset{L_2,K_2}{\sim} \left\{ \begin{bmatrix}
		I_d & 0\\
		0 & N^{(E_{r/c})}
	\end{bmatrix} , \begin{bmatrix}
		F_{11} & 0\\
		0 & F_{22}
	\end{bmatrix}\right\} = \left\{E_2, F_2 \right\}
\]
with
\[
L_2= \begin{bmatrix}
	I_d & 0 \\
	N^{(E_{c/r})} B & I_a
\end{bmatrix}, \quad 
K_2= \begin{bmatrix}
	I_d & 0 \\
	B & I_a
\end{bmatrix}.
\]

\begin{corollary}\label{cor:pureODE_01}
A pure ODE \eqref{eq:pureODE} is obtained for
\[
u(t):=\begin{bmatrix}
	I_d & 0
\end{bmatrix} (K_0K_1)^{-1}, \quad \bar{q}_1 := \begin{bmatrix}
	I_d & 0
\end{bmatrix} L_1L_0q
\]
and $\Omega$ resulting from Step 1.
\end{corollary}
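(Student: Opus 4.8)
The statement to establish is Corollary \ref{cor:pureODE_01}: after Step 0 (into PreSCF) and Step 1 (killing the $(1,2)$-block), the first block component of the transformed unknown satisfies a pure ODE with the coefficient $\Omega$ that emerges from Step 1. The plan is simply to unwind the definition of equivalence transformations and extract the top block row.

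First I would recall what Steps 0 and 1 produce. Step 0 gives $\{E,F\}\overset{L_0,K_0}{\sim}\{E_0,F_0\}$ with $\{E_0,F_0\}$ a PreSCF, and Step 1 gives $\{E_0,F_0\}\overset{L_1,K_1}{\sim}\{E_1,F_1\}$ with
\[
\{E_1,F_1\}=\left\{\begin{bmatrix} I_d & 0\\ 0 & N^{(E_{c/r})}\end{bmatrix},\begin{bmatrix}\Omega & 0\\ F_{21} & R\end{bmatrix}\right\},
\]
where $\Omega$ is the $(1,1)$-block resulting from the Step 1 iteration (denoted $F_{11}^{(\mu)}$ there). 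Composing, $\{E,F\}\overset{L_1L_0,\,K_0K_1}{\sim}\{E_1,F_1\}$; indeed transitivity of $\sim$ is stated right after \eqref{1.Equivalence}, so the composite transformation matrices are $L:=L_1L_0$ and $K:=K_0K_1$. By the interpretation of equivalence for DAEs recalled in the introduction, under the substitution $x=K\bar x$ and premultiplication by $L$, the DAE $Ex'+Fx=q$ becomes $E_1\bar x'+F_1\bar x=Lq$. Writing $\bar x=\begin{bmatrix}u\\ v\end{bmatrix}$ with $u$ the first $d$ components, we have exactly $u(t)=\begin{bmatrix}I_d & 0\end{bmatrix}K^{-1}(t)x(t)=\begin{bmatrix}I_d & 0\end{bmatrix}(K_0K_1)^{-1}(t)x(t)$, matching the claimed formula (up to the harmless typographical omission of the argument $x(t)$).

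Next I would read off the first block row of $E_1\bar x'+F_1\bar x=Lq$. Since the $(1,1)$-block of $E_1$ is $I_d$ and its $(1,2)$-block is $0$, and the $(1,1)$-block of $F_1$ is $\Omega$ with $(1,2)$-block $0$, the first block row is precisely
\[
u' + \Omega u = \begin{bmatrix}I_d & 0\end{bmatrix}Lq = \begin{bmatrix}I_d & 0\end{bmatrix}L_1L_0 q =: \bar q_1,
\]
which is the pure ODE \eqref{eq:pureODE} with right-hand side $\bar q_1$ as stated, and with $\Omega$ from Step 1. It remains to note that this is genuinely a \emph{pure} ODE in the sense of the Definition preceding the Remark: $d>0$ since the DAE is regular with $d=r-\sum\theta_i$, the coefficient $\Omega$ and the right-hand side $\bar q_1$ involve no derivatives of $q$ (the only derivatives that entered the construction were derivatives of the \emph{transformation} blocks $F_{12}^{(k)}(F_{22}^{(k)})^{-1}$ in Lemma \ref{lem:M12F22}, all absorbed into $\Omega$), and the form is that of \eqref{eq:pureODE}.

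\textbf{Main obstacle.} There is essentially no analytic difficulty here — the argument is bookkeeping with block multiplication and the composition rule \eqref{1.Equivalence}. The one point requiring a little care is confirming that the $(1,2)$-block of $E_1$ really is $0$ and not merely the $(1,2)$-block of $F_1$: Lemma \ref{lem:M12F22} guarantees $LE_0K=E_0$ at every iteration, so $E_1=E_0$ has the block form $\mathrm{diag}(I_d,N^{(E_{c/r})})$ with zero off-diagonal blocks, and this is exactly what makes the top block row decouple into a first-order ODE in $u$ alone with no coupling to $v'$. Once that is observed, extracting the first block row finishes the proof.
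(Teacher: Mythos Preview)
Your argument is correct in substance but proceeds differently from the paper. You extract the top block row directly from the system $\{E_1,F_1\}$ after Step~1, observing that the vanishing of both $(E_1)_{12}$ and $(F_1)_{12}$ decouples the first $d$ equations into $u'+\Omega u=\bar q_1$. The paper instead starts from the pure ODE as defined via the full SCF (after Steps 0--3) and argues backwards: since $K_2,K_3$ have block form $\begin{bmatrix} I_d & 0\\ * & *\end{bmatrix}$ (likewise $L_2,L_3$), one has $[I_d\ 0](K_0K_1K_2K_3)^{-1}=[I_d\ 0](K_0K_1)^{-1}$ and $[I_d\ 0]L_3L_2L_1L_0=[I_d\ 0]L_1L_0$, so Steps~2 and~3 are invisible in the first block row. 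Your route is more direct; the paper's makes the link to the formal \emph{definition} of ``pure ODE'' (which is tied to an SCF transformation, not merely to the absence of derivatives of $q$) explicit. The one point to tighten in your write-up is precisely this: verifying the \emph{properties} you list is not the same as verifying the definition. To close the gap, simply note---as the paper and the text immediately preceding the corollary do---that Steps~2 and~3 leave $\Omega$, $u$, and $\bar q_1$ unchanged, so the ODE you extract after Step~1 coincides with the one in the eventual SCF. (Also, your remark that $d>0$ follows from regularity is not quite right: regular DAEs with $d=0$ exist, in which case the corollary is vacuous.)
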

\begin{proof}
The assertion follows from the particular form of $K_3, K_2$, $L_3, L_2$:
\[
\begin{bmatrix}
	I_d & 0
\end{bmatrix} K_3^{-1}K_2^{-1}K_1^{-1}K_0^{-1} =
\begin{bmatrix}
	I_d & 0
\end{bmatrix} K_1^{-1}K_0^{-1}, \quad \begin{bmatrix}
	I_d & 0
\end{bmatrix} L_3L_2L_1L_0 =
\begin{bmatrix}
	I_d & 0
\end{bmatrix} L_1L_0. 
\]
\end{proof}

\section{Application to T-canonical and S-canonical forms}\label{sec:T-S-canonical}

\subsection{T-canonical form}

Due to Lemma \ref{lem:Ncr} we can assume that $N$ has the required secondary diagonal blocks such that
according to Lemma \ref{lem:Rcr} 
\[
R^c:=N(N^{(E_c)})^T+(I-N^{(E_c)}(N^{(E_c)})^T) \in \BUT_{nonsingular}
\]
is nonsingular and $R^{-c}:=(R^c)^{-1}\in \BUT_{nonsingular}$ fulfills $R^{-c}N=N^{(E_c)}$.
Therefore, with
\[
L_c =\begin{bmatrix}
	I_d & \\
	 & R^{-c}
\end{bmatrix}, \quad K_c= \begin{bmatrix}
	I_d & \\
	 & I_{a}
\end{bmatrix}
\]
it follows
\[
\left\{ 
\begin{bmatrix}
	I_{d} & 0\\
	0 & N
\end{bmatrix}, \ \begin{bmatrix}
	\Omega & 0 \\
	F_{21} & I_{a}
\end{bmatrix}
\right\} \quad \sim \quad 
\left\{ 
\begin{bmatrix}
	I_{d} & 0\\
	0 & N^{(E_c)}
\end{bmatrix},  \begin{bmatrix}
	\Omega & 0  \\
	R^{-c}F_{12} & R^{-c}
\end{bmatrix}
\right\},
\]
that is in a PreSCF  according to Definition \ref{def:prel_SCF} for decreasing block sizes $\ell_1 \geq \cdots \geq \ell_{\mu} $. Note further that due to $F_{12}=0$, the transformations in Steps 1 and 2 do not change neither $\Omega$ nor $R^{-c}$, and Step 3 and 4 lead to
\[
\left\{ 
\begin{bmatrix}
	I_{d} & 0\\
	0 & N^{(E_c)}
\end{bmatrix}, 
\begin{bmatrix}
	\Omega &  0 \\
	0 & R^{-c}
\end{bmatrix}
\right\}  \quad \sim \quad  \left\{ \begin{bmatrix}
	I_{d} & 0\\
	0 & N^{(E_c)}
\end{bmatrix}, 
\begin{bmatrix}
	\Omega &   0\\
	0 & I_{a}
\end{bmatrix} 
\right\} .
\] 

Moreover, if $L_T, K_T$ are the transformation matrix functions into the T-canonical form, then due to $F_{12}=0$ also $A=0$ and
\[
\pPi_{can} = K_T \begin{bmatrix}
	I_d & 0 \\
	B & 0
\end{bmatrix} K_T^{-1}.
\]

\subsection{S-canonical form}
Due to Lemma \ref{lem:Ncr} we can assume that $N$ has the required secondary diagonal blocks such that
according to Lemma \ref{lem:Rcr} 
\[
R^r:=(N^{(E_r)})^TN+(I-(N^{(E_r)})^TN^{(E_r)}) \in \BUT_{nonsingular}
\]
is nonsingular and $R^{-r}:=(R^r)^{-1}\in \BUT_{nonsingular}$ fulfills $NR^{-r}=N^{(E_r)}$.
Therefore, with
\[
L_r =\begin{bmatrix}
	I_d & \\
	 & I_{a}
\end{bmatrix}, \quad K_r= \begin{bmatrix}
I_d & -E_{12}R^{-r}\\
0	 & R^{-r}
\end{bmatrix}
\]
it follows
\[
\left\{ 
\begin{bmatrix}
	I_{d} & E_{12}\\
	0 & N
\end{bmatrix},  \begin{bmatrix}
	\Omega & 0 \\
	0 & I_{a}
\end{bmatrix}
\right\} \sim  
\left\{ 
\begin{bmatrix}
	I_{d} & 0\\
	0 & N^{(E_r)}
\end{bmatrix}, \begin{bmatrix}
	\Omega & -\Omega E_{12}R^{-r} - (E_{12}R^{-r})'+E_{12}(R^{-r})'  \\
	0 & R^{-r} + N(R^{-r})'
\end{bmatrix}
\right\}
\]
that is in a PreSCF according to Definition \ref{def:prel_SCF} for increasing block sizes $\ell_1 \leq \cdots \leq \ell_{\mu} $.
Note further that due to $F_{21}=0$, the transformations in Steps 1 and 2 do not change neither $\Omega$ nor $R^{-r}+ N(R^{-r})'\in \BUT_{nonsingular}$, and Step 3 and 4 lead to
\[
\left\{ 
\begin{bmatrix}
	I_{d} & 0\\
	0 & N^{(E_r)}
\end{bmatrix}, 
\begin{bmatrix}
	\Omega &  0 \\
	0 & R^{-r}+ N(R^{-r})'
\end{bmatrix}
\right\} \sim \left\{ \begin{bmatrix}
	I_{d} & 0\\
	0 & N^{(E_r)}
\end{bmatrix}, 
\begin{bmatrix}
	\Omega &   0\\
	0 & I_{a}
\end{bmatrix} 
\right\} .
\] 
Moreover, if $L_S, K_S$ are the transformation matrix function into the S-canonical form, then due to $F_{21}=0$ also $B=0$ and
\[
\pPi_{can} = K_SK_r \begin{bmatrix}
	I_d & -A \\
	0 & 0
\end{bmatrix} K_r^{-1}K_S^{-1}.
\]

\section{A closer look to DAEs in Hessenberg form} \label{sec:Hessenberg}

\subsection{Definition and canonical characteristics}

Let us consider pairs $\{H_E, H_F\}$ in Hessenberg form for $\mu\geq 2$. This means that there exist $m_j \in \N$, $j=1, \ldots, \mu$, $~m=m_1 + \ldots + m_{\mu}$, $r=m_1 + \ldots + m_{\mu-1}$, $m_{\mu}=m-r$, matrix functions
\[
H_{ij} :\mathcal I\rightarrow \reals^{m_i \times m_j}
\]
and that the pattern
\begin{eqnarray}
H_E=\begin{bmatrix}
	I_r & 0 \\
	0 & 0
\end{bmatrix}, \quad H_F=
\begin{bmatrix}
   H_{11}&H_{12}&\cdots&H_{1, \mu-1}&H_{1 \mu} \\
	 H_{21}&H_{22}&\cdots&H_{2 ,\mu-1}&0 \\
   &\ddots&\vdots & \vdots & \vdots\\
  &&H_{\mu-1, \mu-2}&H_{\mu-1, \mu-1} &0 \\
	 &&&H_{\mu, \mu-1}  &0\\
   \end{bmatrix} \begin{array}{lll}
		\left.\right\}  ~m_1 \\
		\left.\right\}  ~m_2\\
		\quad  \vdots  \\
		\left.\right\} 	m_{\mu-1} \\
		\left.\right\} 	m_{\mu}&=&m-r
	 \end{array} 
\label{eq:HessenberForm}
\end{eqnarray}
with nonsingular
\[
H_{\mu, \mu-1} H_{\mu-1, \mu-2} \cdots H_{21} H_{1\mu}:\mathcal I\rightarrow \reals^{m_{\mu} \times m_{\mu}}
\]
is given for all $t \in \mathcal I$, such that $m_1 \geq m_2 \geq \cdots \geq m_{\mu}>0$.

Using \cite[Theorem 8.1]{commonground2024}, the rank properties proven in \cite[Theorem 3.42]{CRR} imply
\[
\theta_0= \cdots = \theta_{\mu-2} =m_{\mu}=m-r=:\theta, \quad d=r-(\mu-1)\theta = m-\mu\theta.
\]
In particular, this means that
\[
N^{(E)}:=\begin{bmatrix}
	0 & I_\theta &  & \\
   & \ddots & \ddots & \\
	& & 0 & I_\theta \\
		&&& 0
\end{bmatrix} \quad \mbox{with} \quad \ell_1 = \cdots = \ell_{\mu}=\theta=m-r
\]
can be interpreted as $N^{(E_c)}$ or $N^{(E_r)}$. Of course, $H_E$ can easily be transformed into $\begin{bmatrix}I_d & 0 \\ 0 &N^{(E)}\end{bmatrix}$. Below, if particular orthogonal $L_0$ and $K_0$ are used, then the matrix  function $F_0$ in  \eqref{eq:Initial_eq} presents a block structure that corresponds to a PreSCF.

\subsection{The index-2 case}
Let us first focus on $\mu=2$ and therefore
\begin{eqnarray}
H_E=\begin{bmatrix}
	I_r & 0 \\
	0 & 0
\end{bmatrix}, \quad H_F=
\begin{bmatrix}
   H_{11}&H_{12}\\
	 H_{21}  &0\\
   \end{bmatrix} \begin{array}{lll}
		\left.\right\}  ~m_1 \\
		\left.\right\} 	~m_{2}&=&m-r = \theta
	 \end{array} 
\label{eq:HessenberFormI2}
\end{eqnarray}
with nonsingular
\[
H_{21} H_{12}:\mathcal I\rightarrow \reals^{m_{2} \times m_{2}}
\]
is given for all $t \in \mathcal I$.

\subsubsection{Step 0: An initial orthogonal equivalence transformation into PreSCF}
To decouple equations presenting this structure, 
we  use a matrix function $B_{d,2}$ whose columns are an orthonormal basis  of $\ker H_{21}$ and a matrix function $B_{a,2}$ whose columns are an orthonormal basis  of the orthogonal complement, such that
\[
B_{d,2} : \mathcal I \rightarrow \reals^{m_1 \times (m_1-\theta)}, \quad B_{a,2} : \mathcal I \rightarrow \reals^{m_1 \times \theta}, \quad \im B_{d,2} \oplus \im B_{a,2}= \reals^{m_1},
\]
and
\[
	B_{d,2}^TB_{d,2}=I_{m_1-\theta},\quad B_{a,2}^TB_{a,2}=I_{\theta}, \quad B_{d,2}^TB_{a,2}=0.
\] 
With these rectangular matrices functions, we construct the orthogonal matrix functions
\[
L_{orth}= \begin{bmatrix}
	B_{d,2}^T & 0 \\
	B_{a,2}^T & 0 \\
	0 & I_{\theta}
\end{bmatrix}, \quad K_{orth}=\begin{bmatrix}
	B_{d,2} & 0 & B_{a,2}\\
	0 & I_{\theta} & 0 
\end{bmatrix},
\]
such that $\{ H_E,H_F\} \sim \{ E_0,F_0\}$ with
\[
E_0 = L_{orth} H_E K_{orth} = \left[\begin{array}{c|cc}
	I_{m_1-\theta} & 0 & 0 \\
	\hline
	0 & 0 & I_{\theta} \\
	0 & 0 & 0 
\end{array}\right] =\begin{bmatrix}
	I_d & 0 \\
	0 & N^{(E)}
\end{bmatrix},
\]

\[
 L_{orth} H_E K_{orth}' = \left[\begin{array}{c|cc}
	B_{d,2}^TB_{d,2}'   & 0 & B_{d,2}^TB_{a,2} \\
	\hline
	B_{a,2}^TB_{d,2} & 0 & B_{a,2}^TB_{a,2}' \\
	0 & 0 & 0 
\end{array}\right],
\]
\[
F_0= L_{orth} H_F K_{orth} +L_{orth} H_E K_{orth}' = \begin{bmatrix}
	F_{11} & F_{12}\\
	F_{21} & F_{22}
\end{bmatrix}\begin{matrix}
\left.\right\} & d \\
\left.\right\} & a
\end{matrix}
\]
with $d=m-2\theta=m_1-m_2$,  $F_{11}:\mathcal I\rightarrow \reals^{d \times d}$ and
\[
F_{22}=\begin{bmatrix}
   (F_{22})_{11}&(F_{22})_{12} \\
  0 &(F_{22})_{22}\\
   \end{bmatrix} \begin{array}{lll}
		\left.\right\}  ~\ell_1&=& \theta \\
		\left.\right\}  ~\ell_2&=&\theta=m_2=m-r
	 \end{array} 
\]
having quadratic blocks
\[
(F_{22})_{11}=B_{a,2}^TH_{12}:\mathcal I\rightarrow \reals^{\theta \times \theta},
\]
\[
(F_{22})_{22}=H_{21}B_{a,2}:\mathcal I\rightarrow \reals^{\theta \times \theta}.
\]
These blocks are nonsingular, since
\begin{itemize}
	\item $H_{12}$ has full column rank $m_2$, $B_{a,2}^T$ has full row rank $m_2$, $m_2=\theta$.
	\item $H_{21}$ has full row rank $m_2$, $B_ {a,2}$ has full column rank $m_2$, $m_2=\theta$,
	\item $\theta=m_2=\rank H_{21}H_{12} =  \rank H_{21}B_ {a,2}B_ {a,2}^TH_{12} \leq \min \left\{ \rank H_{21}B_ {a,2}, \rank B_ {a,2}^TH_{12}\right\} \leq \theta$, whereas we used that $B_ {a,2}B_ {a,2}^T$ is an orthogonal projector onto $(\ker H_{21})^{\perp}$.
\end{itemize}

Moreover, we have the structure
\[
F_{12}=\begin{blockarray}{ccc}
     \ell_1 & \ell_2  \\
\begin{block}{[cc]c}
 * & * & m_1-\theta  \\
\end{block}
\end{blockarray},
\]
as well as
\[
F_{21}=\begin{blockarray}{cc}
    m_1-\theta   &  \\
\begin{block}{[c]c}
  * & \ell_1\\
 0  &  \ell_{2}\\
\end{block}
\end{blockarray},
\]
whereas $*$ stands for possibly nonzero and time varying  blocks.

Because of this pattern, we have the block structure
\[
F_{21}F_{12} =\begin{blockarray}{ccc}
    \ell_1   &\ell_{2} &  \\
\begin{block}{[cc]c}
 * & *& \ell_1 =\theta\\
	0 & 0 &   \ell_{2}=\theta  \\
\end{block}
\end{blockarray},
\]
such that a PreSCF is given. Therefore,
$F_{21}F_{12}F_{22}^{-1}N^{(E)}$ and $N^{(E)} F_{22}^{-1}F_{21}F_{12}$ have the strictly upper block structure
\[
\begin{blockarray}{ccc}
    \ell_1   &\ell_{2} &  \\
\begin{block}{[cc]c}
 0 & *&  \ell_1 =\theta\\
	0 & 0 &  \ell_{2}=\theta  \\
\end{block}
\end{blockarray},
\]
as required for Steps 1 and 2.

\subsubsection{An index-2 example} \label{sec:Ex_HMM98}
\begin{example}\cite[Section 5]{HaMaMae98}, \cite[Example 8.11]{CRR} \label{ex:HMM98}
 Hessenberg form, index $\mu=2$, $m=3$, $r=2$, $\theta_0=1$, $d=1$.
\[
E=
\left[\begin{matrix}1 & 0 & 0\\0 & 1 & 0\\0 & 0 & 0\end{matrix}\right]
,\quad 
F=
\left[\begin{matrix}\lambda & -1 & -1\\\eta \left(- \eta t^{2} + t - 1\right) & \lambda & - \eta t\\- \eta t + 1 & 1 & 0\end{matrix}\right].
\]
For shortness, let us define $\gamma(t):= \sqrt{\left(-\eta t + 1\right)^{2} + 1} \neq 0$.
\end{example}
Step 0: $\{ E,F\} \sim \{ E_0,F_0\}$
\[
L_0=
\left[\begin{matrix}\frac{1}{\gamma(t)} & \frac{\eta t - 1}{\gamma(t)} & 0\\\frac{- \eta t + 1}{\gamma(t)} & \frac{1}{\gamma(t)} & 0\\0 & 0 & 1\end{matrix}\right]
,\quad 
K_0=
\left[\begin{matrix}\frac{1}{\gamma(t)} & 0 & \frac{- \eta t + 1}{\gamma(t)}\\\frac{\eta t - 1}{\gamma(t)} & 0 & \frac{1}{\gamma(t)}\\0 & 1 & 0\end{matrix}\right],
\]

\[
E_0=
\left[\begin{array}{c|cc} 1 & 0 & 0\\
\hline
0 & 0 & 1\\0 & 0 & 0\end{array}\right]
=\begin{bmatrix}
	I_d & 0 \\
	0 & N^{(E)}
\end{bmatrix},
\]
\[
F_0=
\left[\begin{array}{c|cc}
\frac{- \eta \left(\eta t - 1\right) \left(t \left(\eta t - 1\right) + 1\right) + \lambda + \left(\eta t - 1\right) \left(\lambda \left(\eta t - 1\right) - 1\right)}{\gamma(t)^2} & \frac{- \eta t \left(\eta t - 1\right) - 1}{\gamma(t)} & \frac{\eta^{4} t^{4} - 3 \eta^{3} t^{3} + \eta^{3} t^{2} + 3 \eta^{2} t^{2} - 2 \eta^{2} t - \eta t - 1}{\gamma(t)^2}\\
\hline
\frac{- \eta t + 1}{\gamma(t)^2} & - \frac{1}{\gamma(t)} & \frac{\eta t + \lambda + \left(\eta t - 1\right) \left(\eta t \left(\eta t - 1\right) + \eta + \lambda \left(\eta t - 1\right)\right) - 1}{\gamma(t)^2}\\0 & 0 & \gamma(t)
\end{array}\right].
\]
As expected, by construction $F_{22}$ is nonsingular and (block) triangular. Now we show how starting from $\{E_0,F_0\}$ an SCF and an SSCF can be constructed.

Step 1: $\{ E_0,F_0\} \sim \{ E_1,F_1\}$ with $E_1=E_0$ and
\[
L_1=
\left[\begin{array}{c|cc}1 & - \eta^{2} t^{2} + \eta t - 1 & \eta t \gamma(t)\\
\hline 0 & 1 & 0\\0 & 0 & 1\end{array}\right]
,\quad 
K_1=
\left[\begin{array}{c|cc}1 & 0 & \eta^{2} t^{2} - \eta t + 1\\
\hline
0 & 1 & 0\\0 & 0 & 1\end{array}\right],
\]
\[
F_1=
\left[\begin{array}{c|cc}\omega(t) & 0 & 0\\
\hline
\frac{- \eta t + 1}{\gamma(t)^2} & - \frac{1}{\gamma(t)} & \frac{\eta^{2} \lambda t^{2} + \eta^{2} t - 2 \eta \lambda t - \eta + 2 \lambda}{\gamma(t)^2}\\0 & 0 & \gamma(t)\end{array}\right],
\]
for
\[
\omega(t):=\frac{\eta^{2} \lambda t^{2} - \eta^{2} t - 2 \eta \lambda t + \eta + 2 \lambda}{\gamma(t)^2} = \lambda + \frac{\ - \eta^{2} t  + \eta }{\gamma(t)^2}.
\]
Step 2: $\{ E_1,F_1\} \sim \{ E_2,F_2\}$ with $E_2=E_1=E_0$ and
\[
L_2=
\left[\begin{array}{c|cc}1 & 0 & 0\\
\hline 
0 & 1 & 0\\0 & 0 & 1\end{array}\right]
,\quad 
K_2=
\left[\begin{array}{c|cc}1 & 0 & 0\\
\hline
\frac{- \eta t + 1}{\gamma(t)} & 1 & 0\\0 & 0 & 1
\end{array}\right],
\]
\[
F_2=
\left[\begin{array}{c|cc}
\omega(t) & 0 & 0\\
\hline 0 & - \frac{1}{\gamma(t)} & \frac{\eta^{2} \lambda t^{2} + \eta^{2} t - 2 \eta \lambda t - \eta + 2 \lambda}{\gamma(t)^2}\\0 & 0 & \gamma(t)
\end{array}\right].
\]

Step 3: $\{ E_2,F_2\} \sim \{ E_{SCF},F_{SCF}\}$
\[
L_3=\begin{bmatrix}
	1 & 0 \\
	0 & (F_2)_{22}^{-1}
\end{bmatrix}, \quad K_3=I_3,
\]
\[
E_{SCF}=
\left[\begin{array}{c|cc} 1 & 0 & 0\\
\hline
0 & 0 & - \gamma(t)\\0 & 0 & 0\end{array}\right]
, \quad F_{SCF} = \left[\begin{array}{c|cc} \omega(t) & 0 & 0\\
\hline
0 & 1 & 0\\0 & 0 & 1\end{array}\right].
\]
 Step 4: Following the procedure from \cite{SSCF25}, with
\[
L_4=\left[\begin{array}{c|cc}
	1 & 0 & 0  \\
	\hline
	0 & \frac{1}{-\gamma(t)} & 0 \\
	0 & 0 & 1
\end{array}\right]
, \quad K_4=\left[\begin{array}{c|cc}
	1 & 0 & 0  \\
	\hline
	0 & -\gamma(t)& 0 \\
	0 & 0 & 1
\end{array}\right].
\]
we finally obtain
\[
\{ E_{SCF},F_{SCF}\}\sim \{ E_{SSCF},F_{SSCF}\}
\]
with
\[
E_{SSCF}=\left[\begin{array}{c|cc}
	1 & 0 & 0  \\
	 \hline
	0 & 0 & 1 \\
	0 & 0 & 0
\end{array}\right], \quad F_{SSCF}=F_{SCF}=\left[\begin{array}{c|cc} \omega(t) & 0 & 0\\
\hline
0 & 1 & 0\\0 & 0 & 1\end{array}\right].
\]
Note that the existence of an SCF and an SSCF for the same DAE corresponds to the uniqueness properties of the SCF, see Theorem \ref{th:Uniqueness_SCF} in the Appendix \ref{sec:Uniqueness}.

Since
\[
L:=L_2L_1 L_0=
\left[\begin{matrix}\eta t \gamma(t) & \frac{- \eta^{2} t^{2} + 2 \eta t - 2}{\gamma(t)} & \eta t \gamma(t)\\\frac{- \eta t + 1}{\gamma(t)} & \frac{1}{\gamma(t)} & 0\\0 & 0 & 1\end{matrix}\right]
,\]
and
\[
K:=K_0K_1K_2=
\left[\begin{matrix}\frac{1}{\gamma(t)} & 0 & \gamma(t)\\\frac{\eta t - 1}{\gamma(t)} & 0 & \eta t \gamma(t)\\\frac{- \eta t + 1}{\gamma(t)} & 1 & 0\end{matrix}\right]
\]
provide $\{E_2, F_2\}$, with
\[
K^{-1}=
\left[\begin{matrix}\eta t \gamma(t) & - \gamma(t) & 0\\\eta t \left(\eta t - 1\right) & - \eta t + 1 & 1\\\frac{- \eta t + 1}{\gamma(t)} & \frac{1}{\gamma(t)} & 0\end{matrix}\right],
\]
we can represent  the canonical projector
\begin{align}
\pPi_{can}= K \begin{bmatrix}
 1 & 0 & 0 \\
0 & 0 & 0\\
 0 & 0& 0
\end{bmatrix} K^{-1}=
\left[\begin{matrix}\eta t & -1 & 0\\\eta t \left(\eta t - 1\right) & - \eta t + 1 & 0\\\eta t \left(- \eta t + 1\right) & \eta t - 1 & 0\end{matrix}\right]. \label{eq:PicanHMM}
\end{align}

Alternatively, we can compute $\pPi_{can}$ with Proposition \ref{prop:Pican}. From the representation of $K_1$ and $K_2$ we know
\[
A=\begin{bmatrix}
	0 & \eta^{2} t^{2} - \eta t + 1
\end{bmatrix}, \quad B= \begin{bmatrix}
	\frac{- \eta t + 1}{\gamma(t)} \\
	0
\end{bmatrix},
\]
such that $AB=0$, therefore also $ABA=0$, and
\[
\pPi_{can}= K_0 \begin{bmatrix}
	I_d  & -A \\
	B & -BA
\end{bmatrix}K_0^{-1}= K_0\begin{bmatrix}
	1  & 0 & -\eta^{2} t^{2} + \eta t - 1 \\
	\frac{- \eta t + 1}{\gamma(t)}  & 0 & \frac{(- \eta t + 1)(\eta^{2} t^{2} - \eta t + 1)}{\gamma(t)} \\
	0 & 0 & 0
\end{bmatrix}K_0^{-1},
\]
that also leads to the expression \eqref{eq:PicanHMM}.

Let us finally  compute the solution of the resulting homogeneous DAE $E_{SSCF}z'+F_{SSCF}z=0$ for
$z = K^{-1}x$. 
The solution of the pure ODE
\[
z_1'=-\omega(t)z_1
\]
reads
\[
z_1(t) = C \cdot e^{ -\lambda t} \cdot \gamma(t).
\]
and the pure DAE leads to
\[
z_3=0, \quad z_2=0,
\]
such that
\begin{eqnarray*}
x&=&Kz=\left[\begin{matrix}\frac{1}{\gamma(t)} & 0 & \gamma(t)\\\frac{\eta t - 1}{\gamma(t)} & 0 & \eta t \gamma(t)\\\frac{- \eta t + 1}{\gamma(t)} & 1 & 0\end{matrix}\right]
 \begin{bmatrix}
C \cdot e^{ -\lambda t} \cdot \gamma(t)
 \\
0 \\
0
\end{bmatrix} = C \cdot e^{ -\lambda t}
\begin{bmatrix}
1 \\
\eta t -1 \\
-\eta t +1
\end{bmatrix}.
\end{eqnarray*}
This coincides with the known solution from literature. However, note that the pure ODE obtained in \cite{HaMaMae98} from an IERODE is $\tilde{z}_1'=-\lambda ~\tilde{z}_1$. This apparent difference results from the choice of $K_0$ only and corresponds to $z_1=  \gamma ~\tilde{z}_1$, since also the pure ODE is not unique.
Indeed, in terms of Corollary \ref{cor:uniqSCF}, for
\[
\bar L =\begin{bmatrix}
	-\gamma(t) & 0 \\
	0 & I_2
\end{bmatrix}, \quad \bar K =\begin{bmatrix}
\frac{1}{-\gamma(t)}	& 0 \\      
	0 & I_2
\end{bmatrix}
\] 
it holds
\[
\{E_{SSCF}, F_{SSCF}\} \quad  \overset{\bar{L}, \bar{K}}{\sim} \quad \left\{ \left[\begin{array}{c|cc}
	1 & 0 & 0  \\
	 \hline
	0 & 0 & 1 \\
	0 & 0 & 0
\end{array}\right], \left[\begin{array}{c|cc} \lambda & 0 & 0\\
\hline
0 & 1 & 0\\0 & 0 & 1\end{array}\right]\right\}.
\]

We want to emphasize also that choosing at the beginning
\[
\tilde{L}_0=
\left[\begin{matrix}\frac{1}{\gamma(t)^2} & \frac{\eta t - 1}{\gamma(t)^2} & 0\\\frac{- \eta t + 1}{\gamma(t)^2} & \frac{1}{\gamma(t)^2} & 0\\0 & 0 & 1\end{matrix}\right]
,\quad 
\tilde{K}_0=
\left[\begin{matrix}1 & 0 & - \eta t + 1\\\eta t - 1 & 0 & 1\\0 & 1 & 0\end{matrix}\right]
\]
Steps 0, 1, 2 lead to the  pure ODE $\tilde{z}_1'=-\lambda \tilde{z}_1$ straight forward. This makes clear that, although in general we proposed to choose orthogonal $L_0$ and $K_0$, a different scaling may be reasonable to define $K_0$  as simple as possible.

\subsection{The particular structure resulting in Multibody Dynamics}
\subsubsection{A Hessenberg form of order 3}
Considering linear (or linearized) constrained multibody systems with $p$ and $v$ describing the $n_p$ position and velocity coordinates, respectively, leads to equations of the form
\[
\begin{bmatrix}
	I & 0 & 0 \\
	0 & M & 0 \\
	0 & 0 & 0
\end{bmatrix}
\begin{bmatrix}
	p'\\
	v'\\
	0
\end{bmatrix} + \begin{bmatrix}
	0 & -I & 0 \\	
	K & D & G^T \\
	G & 0 & 0 
\end{bmatrix}\begin{bmatrix}
	p\\
	v\\
	\lambda
\end{bmatrix} = q,
\]
where $G$ is the $n_{\lambda} \times n_{p}$ constraint matrix, $K,D,M$ are the stiffness-, damping- and mass matrices,  $q$ may contain time dependent excitations and the $n_{\lambda}$ unknowns $\lambda$ are the so-called Lagrange multipliers \cite{Eich-SoelFueh98}. For constant $G, K, D, M$, the state space form is well-understood. Here, we focus on the case that these matrices may vary with time, although we drop the argument $t$ for the sake of clarity. We start permuting and scaling the equations with
\[
L_H = \begin{bmatrix}
	0 & I_{n_p} & 0 \\
	M^{-1} & 0 & 0\\
	0 & 0 & I_{n_{\lambda}}
\end{bmatrix}, \quad K_H=\begin{bmatrix}
	0 & I_{n_p} & 0 \\
	I_{n_p} & 0 & 0\\
	0 & 0 & I_{n_{\lambda}}
\end{bmatrix}
\]
to obtain the pair in Hessenberg form
\[
\{H_E, H_F\}=\left\{ \begin{bmatrix}
	I & 0 & 0 \\
	0  & I & 0 \\
	0 & 0 & 0
\end{bmatrix}, \begin{bmatrix}
	M^{-1}D & M^{-1}K & M^{-1}G^T \\
	-I & 0 & 0 \\
	0 &  G & 0
\end{bmatrix}\right\},
\]
with nonsingular $GM^{-1}G^T$, $m_1=m_2=n_p$, $m_3=n_{\lambda}=\theta$, $x_1=v$, $x_2=p$, $x_3=\lambda$. 

We also want to emphasize that if in the 3D-case the angular velocities are considered as well, then in the linear (or linearized) case we consider
\[
x_1=\begin{bmatrix}
	v\\
	s
\end{bmatrix}, \quad x_2=p, \quad x_3 =\lambda, \quad m_1=n_p+n_s, \ m_2=n_p,  \ m_3=n_{\lambda}=\theta,
\] 
and $p'=Zv$ for a regular transformation matrix function $Z$, leading to
\[
H_{13}=\begin{bmatrix}
	M^{-1}Z^TG^T\\
	0
\end{bmatrix}, \quad H_{21}= \begin{bmatrix}
	-Z & 0
\end{bmatrix}, \quad H_{32} = G,
\]
see \cite{Eich-SoelFueh98}, \cite{CRR}, with nonsingular
\[
H_{32} H_{21}H_{13} = -GZM^{-1}Z^TG^T.
\]
Therefore,  a special Hessenberg form
\[
\left\{ \begin{bmatrix}
	I_{m_1} & 0 & 0 \\
	0  & I_{m_2} & 0 \\
	0 & 0 & 0
\end{bmatrix}, \begin{bmatrix}
	H_{11} & H_{12}  & H_{13} \\
	[Z_{21} \ 0]& 0& 0 \\
	0 &  G & 0
\end{bmatrix}\right\}
\]
for a quadratic, nonsingular $Z_{21}$ results. Notice now that the equivalence transformation with
\[
L_Z = \begin{bmatrix}
	I_{m_1}& 0  & 0 \\
	0 & Z_{21}^{-1} & 0 \\
	0 & 0 & I_{m_3}
\end{bmatrix}, \quad K_Z=
\begin{bmatrix}
	I_{m_1} & 0 & 0 \\
	0 & Z_{21} & 0 \\
	0 & 0 & I_{m_3}
\end{bmatrix}
\]
leads to
\[
\left\{ \begin{bmatrix}
	I_{m_1} & 0 & 0 \\
	0  & I_{m_2} & 0 \\
	0 & 0 & 0
\end{bmatrix}, \begin{bmatrix}
	H_{11} & H_{12}Z_{21}  & H_{13} \\
	[I_{m_2} \ 0]& Z_{21}^{-1}Z_{21}' & 0 \\
	0 &  H_{32}Z_{21} & 0
\end{bmatrix}\right\}
\]
with nonsingular
\[
H_{32}Z_{21}[I_{m_2} \ 0]H_{13}=-GZM^{-1}Z^TG^T.
\]

Therefore, for the sake of simplicity in the following we assume that the particular Hessenberg structure
\begin{eqnarray}
\{H_E, H_F\}=\left\{ \begin{bmatrix}
	I_{m_1} & 0 & 0 \\
	0  & I_{m_2} & 0 \\
	0 & 0 & 0
\end{bmatrix}, \begin{bmatrix}
	H_{11} & H_{12}  & H_{13} \\
	[I_{m_2} \ 0]& H_{22} & 0 \\
	0 &  H_{32} & 0
\end{bmatrix}\right\}
\label{eq:Hessenberg3I}
\end{eqnarray}
with nonsingular  $H_{32}[I_{m_2}\ 0]H_{13}$  for all $t \in \mathcal I$ is given, implying $\rank H_{32}=m_3=\theta$, $m=m_1+m_2+m_3$, $r=m_1+m_2$. Of course, for  $m_1=m_2$ we simply assume $[I_{m_2} 0]=I_{m_2}=I_{m_1}$.
\medskip

Finally, we would like to note that the  transformations from below could also be generalized for Hessenberg forms with $\mu>3$ assuming 
\begin{eqnarray*}
H_{i+1,i}=[I_{m_{i+1}} \ 0] \quad  \mbox{for}  \quad i=1, \ldots, \mu-2
\label{eq:Assumption_H_I}.
\end{eqnarray*}

\subsubsection{Step 0: An initial orthogonal equivalence transformation into PreSCF } \label{sec:Step0}

To decouple pairs presenting the structure \eqref{eq:Hessenberg3I}, 
\begin{itemize}
	\item 
we first use a matrix function $B_{d,3}$ whose columns are an orthonormal basis  of $\ker H_{32}$ and a matrix function $B_{a,3}$ whose columns are an orthonormal basis  of the orthogonal complement, such that
\[
B_{d,3} : \mathcal I \rightarrow \reals^{m_2 \times (m_2-\theta)}, \quad B_{a,3} : \mathcal I \rightarrow \reals^{m_2 \times \theta}, \quad \im B_{d,3} \oplus \im B_{a,3}= \reals^{m_2},
\]
and
\[
B_{d,3}^TB_{d,3}=I_{m_2-\theta},\quad B_{a,3}^TB_{a,3}=I_{\theta}, \quad B_{d,3}^TB_{a,3}=0,
\] 
\item we secondly use matrix functions
\[
B_{d,2}=\begin{bmatrix}
	B_{d,3}  & 0 \\
	0 & I_{m_1-m_2}
\end{bmatrix}: \mathcal I \rightarrow \reals^{m_1 \times (m_1-\theta)}, \quad B_{a,2}=\begin{bmatrix}
	B_{a,3} \\
	0 
\end{bmatrix} : \mathcal I \rightarrow \reals^{m_1 \times \theta},
\]
fulfilling $\ker H_{32}H_{21}=\ker H_{32}[I_{m_2} \ 0] = \im  B_{d,2}$, $\im B_{d,2} \oplus \im B_{a,2}= \reals^{m_1},$
\[
 B_{d,2}^TB_{d,2}=I_{m_1-\theta},\quad B_{a,2}^TB_{a,2}=I_{\theta}, \quad B_{d,2}^TB_{a,2}=0,
\]
\item finally, since $\ker H_{32}H_{21}H_{13} = \{0\}$,  we set
$B_{a,1}=I_{m_3}$.
\end{itemize}

With this notation, we define the quadratic and nonsingular 
\[
L_B=\begin{blockarray}{cccc}
    m_1   &m_{2} & m_{3}& \\
\begin{block}{[ccc]c}
	B_{d,2}^T & 0 &0 & m_1-\theta\\
	0 & B_{d,3}^T & 0 & m_2 - \theta\\
	B_{a,2}^T & 0 & 0 & \theta\\
	0 & B_{a,3}^T & 0 & \theta \\
	0 & 0 & I_{m_3} & m_3=\theta \\
\end{block}
\end{blockarray}, \quad
K_B= \begin{blockarray}{cccccc}
    m_1-\theta   &m_{2}-\theta & m_{3}& \theta& \theta & \\
\begin{block}{[ccccc]c}
	B_{d,2} & 0 & 0 & B_{a,2} & 0& m_1\\
	0 & B_{d,3} & 0 &0 & B_{a,3} & m_2\\
	0 & 0 & I_{m_3} & 0& 0 & m_3=\theta \\
\end{block}
\end{blockarray}
.
\]
For \eqref{eq:Hessenberg3I} with $d=m-3 \theta=m_1+m_2-2m_3$, $L_0=L_B$ and $K_0=K_B$ leads to
\[
L_0 H_E K_0 = \begin{bmatrix}
	I_{d} & 0 \\
	0 & N^{(E)}
\end{bmatrix}, \quad N^{(E)}=\begin{bmatrix}
	0 & I_{\theta} & 0 \\
	0 & 0 & I_{\theta} \\
	0 & 0 & 0
\end{bmatrix} \in \reals^{a \times a}.
\]

\[
L_0 H_E K_0' =
\begin{blockarray}{cccccc}
    m_1-\theta   &m_{2}-\theta & \theta& \theta& \theta & \\
\begin{block}{[cc|ccc]c}
	B_{d,2}^TB_{d,2}' & 0 & 0 & B_{d,2}^T B_{a,2}' & 0& m_1-\theta\\
	0 & B_{d,3}^TB_{d,3}' & 0 &0 & B_{d,3}^TB_{a,3}' & m_2-\theta \\
	B_{a,2}^TB_{d,2}' & 0 & 0 & B_{a,2}^TB_{a,2}' & 0& \theta\\
	0 & B_{a,3}^TB_{d,3}' & 0 &0 & B_{a,3}^TB_{a,3}' & \theta \\
	0 & 0 & 0 & 0& 0 & \theta=m_3 \\
\end{block}
\end{blockarray},
\]
 and
\[
F_0:=L_0H_FK_0+L_0H_EK_0'=:\begin{bmatrix}
F_{11} & F_{12} \\
F_{21} & F_{22}
\end{bmatrix}\begin{matrix}
\left.\right\} & d \\
\left.\right\} & a
\end{matrix}
\]
with $d=m-3\theta=m_1+m_2-2m_3$,  $F_{11}:\mathcal I\rightarrow \reals^{d \times d}$ and
\[
F_{22}=\begin{bmatrix}
   (F_{22})_{11}&(F_{22})_{12}&(F_{22})_{13} \\
  0 &(F_{22})_{22}&(F_{22})_{23}\\
 0 &0 &(F_{22})_{33}
   \end{bmatrix} \begin{array}{lll}
		\left.\right\}  ~\ell_1&=& \theta \\
		\left.\right\}  ~\ell_2&=&\theta\\
		\left.\right\} 	~\ell_{3}&=&\theta=m_3=m-r
	 \end{array} 
\]
having quadratic blocks
\begin{eqnarray*}
(F_{22})_{11}&=&B_{a,2}^TH_{13}:\mathcal I\rightarrow \reals^{\theta \times \theta},\\
(F_{22})_{22}&=&B_{a,3}^T[I_{m_2} \ 0 ]B_{a,2} =I_{\ell_2}=I_{\theta}, \\
 (F_{22})_{33}&=&H_{32}B_{a,3}:\mathcal I\rightarrow \reals^{\theta \times \theta},
\end{eqnarray*}
that are nonsingular, since
\begin{itemize}
	\item $H_{13}$ has full column rank $m_3$, $B_{a,2}^T$ has full row rank  $m_3=\theta$,
	\item $H_{32}$ has full row rank $m_2$, $B_{a,3}$ has full column rank  $m_3=\theta$,
	\item $H_{32}H_{21}H_{13}=H_{32}B_ {a,3}B_ {a,3}^T\begin{bmatrix}
		I_{m_2} & 0
	\end{bmatrix}H_{13}=H_{32}B_ {a,3}B_ {a,2}^TH_{13}$, and therefore
	\[
	\theta = m_3=\rank H_{32}H_{21}H_{13}  \leq \min \left\{ \rank H_{32}B_ {a,3}, \rank B_ {a,2}^TH_{13}\right\} \leq \theta,
	\]
	whereas we used that $B_ {a,3}B_ {a,3}^T$ is an orthogonal projector onto $(\ker H_{32})^{\perp}$.
\end{itemize}
Moreover, we have the structure
\[
F_{12}=\begin{blockarray}{cccc}
     \ell_1 & \ell_2 &  \ell_{3}& \\
\begin{block}{[ccc]c}
 * & *& *  & m_1-\theta  \\
	0 & *& * &  m_{2}-\theta \\
\end{block}
\end{blockarray},
\]
as well as
\[
F_{21}=\begin{blockarray}{ccc}
    m_1-\theta   &m_{2}-\theta &  \\
\begin{block}{[cc]c}
 * & * & \ell_1 =\theta\\
	0 & *  &  \ell_{2}=\theta\\
	0 & 0 &   \ell_{3}=\theta \\
\end{block}
\end{blockarray},
\]
whereas $*$ stands for possibly nonzero and time varying  blocks.

Because of this pattern, we have the block structure
\[
F_{21}F_{12} =\begin{blockarray}{cccc}
    \ell_1   &\ell_{2} & \ell_{3}& \\
\begin{block}{[ccc]c}
 * & *& * & \ell_1 =\theta\\
	0 & * & * &  \ell_{2}=\theta \\
	0 & 0 &  0 & \ell_{3} =\theta \\
\end{block}
\end{blockarray},
\]
such that a PreSCF os given. Therefore,
$F_{21}F_{12}F_{22}^{-1}N^{(E)}$ and $N^{(E)} F_{22}^{-1}F_{21}F_{12}$ have the strictly upper block structure
\[
\begin{blockarray}{cccc}
    \ell_1   &\ell_{2} & \ell_{3}& \\
\begin{block}{[ccc]c}
 0 & *& * & \ell_1 =\theta\\
	0 & 0 & * &  \ell_{2}=\theta \\
	0 & 0 &  0 & \ell_{3}=\theta  \\
\end{block}
\end{blockarray},
\]
that is crucial for Steps 1 and 2.
Recall that for arbitrary $\{E,F\}$, the transformation matrix functions $L_0$ and $K_0$ to obtain the block structure $\{E_0,F_0\}$  may be composed step by step. Indeed, for multibody systems, we showed
\[
L_0=L_BL_ZL_H, \quad K_0= K_HK_ZK_B.
\]

\section{An illustrative index-3 example: The linearized Campbell–Moore DAE}\label{sec:Campbell-Moore}

The following example is a linearised version of a test problem proposed in \cite{CampbellMoore95} and has been discussed extensively in \cite{HaMae2023}.

\begin{example} \label{Ex_Cam_Moo}Semi-explicit, index $\mu=3$, $m=7$, $r=6$, $\theta_0=\theta_1=1$, $d=4$, assuming  $\alpha \neq 0$.

\[
E=
\left[\begin{matrix}I_6 & 0 \\ 0 & 0 \end{matrix}\right]
,\]
\[
F=
\left[\begin{matrix}0 & 0 & 0 & -1 & 0 & 0 & 0\\0 & 0 & 0 & 0 & -1 & 0 & 0\\0 & 0 & 0 & 0 & 0 & -1 & 0\\0 & 0 & \sin{\left(t \right)} & 0 & 1 & - \cos{\left(t \right)} & - \alpha \cos^{2}{\left(t \right)}\\0 & 0 & - \cos{\left(t \right)} & -1 & 0 & - \sin{\left(t \right)} & - \alpha \sin{\left(t \right)} \cos{\left(t \right)}\\0 & 0 & 1 & 0 & 0 & 0 & \alpha \sin{\left(t \right)}\\\alpha \cos^{2}{\left(t \right)} & \alpha \sin{\left(t \right)} \cos{\left(t \right)} & - \alpha \sin{\left(t \right)} & 0 & 0 & 0 & 0\end{matrix}\right].
\]
\end{example}
This DAE is not directly in Hessenberg form. To decouple it step by step, we use several equivalence transformations
\[
\{E,F\} \quad \sim \quad  \{LEK,LFK+LEK'\}.
\]
\subsection{Permutation into Hessenberg form}

With the block permutation matrix
\[
P=\begin{bmatrix}
	0 & I_3 & 0 \\
	I_3 & 0 & 0 \\
	0 & 0 & 1
\end{bmatrix}
\]
we obtain a pair in Hessenberg form $\{H_E, H_F\}$ for $L_H=K_H=P=P^T$, such that
\[
\{E,F\} \sim \{H_E, H_F\}
\]
and 
\[
H_E=L_H R K_H=E=\left[\begin{matrix}I_6 & 0 \\ 0 & 0\end{matrix}\right], \quad
H_F=L_HFK_H
,\] 
\[ 
H_F=
\left[\begin{array}{ccc|ccc|c}
0 & 1 & - \cos{\left(t \right)} & 0 & 0 & \sin{\left(t \right)} & - \alpha \cos^{2}{\left(t \right)}\\
-1 & 0 & - \sin{\left(t \right)} & 0 & 0 & - \cos{\left(t \right)} & - \alpha \sin{\left(t \right)} \cos{\left(t \right)}\\
0 & 0 & 0 & 0 & 0 & 1 & \alpha \sin{\left(t \right)}\\
\hline
-1 & 0 & 0 & 0 & 0 & 0 & 0\\
0 & -1 & 0 & 0 & 0 & 0 & 0\\
0 & 0 & -1 & 0 & 0 & 0 & 0\\
\hline
0 & 0 & 0 & \alpha \cos^{2}{\left(t \right)} & \alpha \sin{\left(t \right)} \cos{\left(t \right)} & - \alpha \sin{\left(t \right)} & 0\end{array}\right].
\]
\subsection{Transformation into a PreSCF}
The goal of this step is to construct $\{E_0, F_0 \}$ that are in a PreSCF and fulfill
\[
 \{H_E, H_F\}  \sim \{E_0,F_0\} 
\]
according to Section \ref{sec:Step0}. To this end, we consider a particular basis.
\begin{itemize}
	\item Since $H_{32}=\begin{bmatrix}
		\alpha \cos^{2}{\left(t \right)} & \alpha \sin{\left(t \right)} \cos{\left(t \right)} & - \alpha \sin{\left(t \right)} 
	\end{bmatrix}$, we may choose
	\[
	B_{d3} = \begin{bmatrix}
		\sin{\left(t \right)} \cos{\left(t \right)} & \sin{\left(t \right)}\\
		\sin^{2}{\left(t \right)} & - \cos{\left(t \right)}\\
		\cos{\left(t \right)} & 0
	\end{bmatrix}, \quad B_{a3}=\begin{bmatrix}
		\cos^{2}{\left(t \right)} \\ \sin{\left(t \right)} \cos{\left(t \right)} \\ -  \sin{\left(t \right)} 
	\end{bmatrix} .
	\]
	Note that $B_{d3}$ is not uniquely determined and could have been chosen in a different way.
	\item Since $H_{32}H_{21}=-H_{32}$ we can choose $B_{d2}=B_{d3}$ and $B_{a2}=B_{a3}$.
	\item Since $H_{32}H_{21}H_{13}=-\alpha^2 \neq 0$, $B_{a1}=1$.
\end{itemize}

Consider
\[ 
L_B=
\left[\begin{array}{ccc|ccc|c}\sin{\left(t \right)} \cos{\left(t \right)} & \sin^{2}{\left(t \right)} & \cos{\left(t \right)} & 0 & 0 & 0 & 0\\
\sin{\left(t \right)} & - \cos{\left(t \right)} & 0 & 0 & 0 & 0 & 0\\
\hline 
0 & 0 & 0 & \sin{\left(t \right)} \cos{\left(t \right)} & \sin^{2}{\left(t \right)} & \cos{\left(t \right)} & 0\\0 & 0 & 0 & \sin{\left(t \right)} & - \cos{\left(t \right)} & 0 & 0\\
\hline
\cos^{2}{\left(t \right)} & \sin{\left(t \right)} \cos{\left(t \right)} & - \sin{\left(t \right)} & 0 & 0 & 0 & 0\\
\hline
0 & 0 & 0 & \cos^{2}{\left(t \right)} & \sin{\left(t \right)} \cos{\left(t \right)} & - \sin{\left(t \right)} & 0\\
\hline
0 & 0 & 0 & 0 & 0 & 0 & 1\end{array}\right],
\]

\[
K_B=
\left[\begin{array}{cc|cc|c|c|c}
\sin{\left(t \right)} \cos{\left(t \right)} & \sin{\left(t \right)} & 0 & 0 & 0 & \cos^{2}{\left(t \right)} & 0\\
\sin^{2}{\left(t \right)} & - \cos{\left(t \right)} & 0 & 0 & 0 & \sin{\left(t \right)} \cos{\left(t \right)} & 0\\
\cos{\left(t \right)} & 0 & 0 & 0 & 0 & - \sin{\left(t \right)} & 0\\
\hline
0 & 0 & \sin{\left(t \right)} \cos{\left(t \right)} & \sin{\left(t \right)} & 0 & 0 & \cos^{2}{\left(t \right)}\\
0 & 0 & \sin^{2}{\left(t \right)} & - \cos{\left(t \right)} & 0 & 0 & \sin{\left(t \right)} \cos{\left(t \right)}\\
0 & 0 & \cos{\left(t \right)} & 0 & 0 & 0 & - \sin{\left(t \right)}\\
\hline
0 & 0 & 0 & 0 & 1 & 0 & 0\end{array}\right].
\]
Step 0:  Wit $L_0=L_BL_P$, $K_0=K_PK_B$ we obtain
\[
E_0=
\left[\begin{array}{cccc|ccc}1 & 0 & 0 & 0 & 0 & 0 & 0\\0 & 1 & 0 & 0 & 0 & 0 & 0\\0 & 0 & 1 & 0 & 0 & 0 & 0\\0 & 0 & 0 & 1 & 0 & 0 & 0\\
\hline 0 & 0 & 0 & 0 & 0 & 1 & 0\\0 & 0 & 0 & 0 & 0 & 0 & 1\\0 & 0 & 0 & 0 & 0 & 0 & 0\end{array}\right]
,\]
\[
F_0=
\left[\begin{array}{cccc|ccc}
- \frac{\sin{\left(2 t \right)}}{2} & 0 & \cos^{2}{\left(t \right)} & 0 & 0 & - \cos^{2}{\left(t \right)} & - \frac{\sin{\left(2 t \right)}}{2}\\0 & 0 & \cos{\left(t \right)} & 0 & 0 & 0 & - \sin{\left(t \right)}\\-1 & 0 & 0 & \sin{\left(t \right)} & 0 & 0 & -1\\0 & -1 & - \sin{\left(t \right)} & 0 & 0 & 0 & - \cos{\left(t \right)}\\ 
\hline
\sin^{2}{\left(t \right)} & 0 & - \frac{\sin{\left(2 t \right)}}{2} & 0 & - \alpha & \frac{\sin{\left(2 t \right)}}{2} & \sin^{2}{\left(t \right)}\\0 & 0 & 1 & \cos{\left(t \right)} & 0 & -1 & 0\\0 & 0 & 0 & 0 & 0 & 0 & \alpha
\end{array}\right].
\]
\subsection{Steps 1 and 2: Iterative procedures}

Step 1: For
\[
L_1=
\left[\begin{array}{cccc|ccc}
1 & 0 & 0 & 0 & 0 & - \cos^{2}{\left(t \right)} & \frac{\left(\cos^{2}{\left(t \right)} + 3\right) \sin{\left(2 t \right)}}{2 \alpha}\\0 & 1 & 0 & 0 & 0 & 0 & \frac{\sin{\left(t \right)}}{\alpha}\\0 & 0 & 1 & 0 & 0 & 0 & \frac{\cos^{2}{\left(t \right)} + 1}{\alpha}\\ 
0 & 0 & 0 & 1 & 0 & 0 & \frac{\cos{\left(t \right)}}{\alpha}\\
\hline
0 & 0 & 0 & 0 & 1 & 0 & 0\\0 & 0 & 0 & 0 & 0 & 1 & 0\\0 & 0 & 0 & 0 & 0 & 0 & 1 \end{array}\right]
,\quad 
K_1=
\left[\begin{array}{cccc|ccc}1 & 0 & 0 & 0 & 0 & 0 & \cos^{2}{\left(t \right)}\\0 & 1 & 0 & 0 & 0 & 0 & 0\\0 & 0 & 1 & 0 & 0 & 0 & 0\\0 & 0 & 0 & 1 & 0 & 0 & 0\\
\hline
0 & 0 & 0 & 0 & 1 & 0 & 0\\0 & 0 & 0 & 0 & 0 & 1 & 0\\0 & 0 & 0 & 0 & 0 & 0 & 1\end{array}\right],
\]
by Lemma \ref{lem:M12} we obtain 
$
E_1=E_0
$
and
\[
F_1=
\left[\begin{array}{cccc|ccc}
- \frac{\sin{\left(2 t \right)}}{2} & 0 & 0 & - \cos^{3}{\left(t \right)} & 0 & 0 & 0\\0 & 0 & \cos{\left(t \right)} & 0 & 0 & 0 & 0\\-1 & 0 & 0 & \sin{\left(t \right)} & 0 & 0 & 0\\0 & -1 & - \sin{\left(t \right)} & 0 & 0 & 0 & 0\\
\hline
\sin^{2}{\left(t \right)} & 0 & - \frac{\sin{\left(2 t \right)}}{2} & 0 & - \alpha & \frac{\sin{\left(2 t \right)}}{2} & \left(\cos^{2}{\left(t \right)} + 1\right) \sin^{2}{\left(t \right)}\\0 & 0 & 1 & \cos{\left(t \right)} & 0 & -1 & 0\\0 & 0 & 0 & 0 & 0 & 0 & \alpha
\end{array}\right].
\]

\medskip
Step 2: For 
\[
L_2=
\left[\begin{array}{cccc|ccc} 
1 & 0 & 0 & 0 & 0 & 0 & 0\\0 & 1 & 0 & 0 & 0 & 0 & 0\\0 & 0 & 1 & 0 & 0 & 0 & 0\\0 & 0 & 0 & 1 & 0 & 0 & 0\\
\hline0 & 0 & -1 & - \cos{\left(t \right)} & 1 & 0 & 0\\0 & 0 & 0 & 0 & 0 & 1 & 0\\0 & 0 & 0 & 0 & 0 & 0 & 1 \end{array}
\right]
,\]
\[ 
K_2=
\left[\begin{array}{cccc|ccc}
1 & 0 & 0 & 0 & 0 & 0 & 0\\0 & 1 & 0 & 0 & 0 & 0 & 0\\0 & 0 & 1 & 0 & 0 & 0 & 0\\0 & 0 & 0 & 1 & 0 & 0 & 0\\
\hline
\frac{\sin^{2}{\left(t \right)} + 1}{\alpha} & \frac{\cos{\left(t \right)}}{\alpha} & \frac{\sin{\left(2 t \right)}}{2 \alpha} & - \frac{\sin^{3}{\left(t \right)} + \sin{\left(t \right)}}{\alpha} & 1 & 0 & 0\\0 & 0 & 1 & \cos{\left(t \right)} & 0 & 1 & 0\\0 & 0 & 0 & 0 & 0 & 0 & 1
\end{array}\right],
\]
$E_2=E_1=E_0$ and
by Lemma \ref{lem:M21} we obtain 
$
E_2=E_1=E_0
$
and
\[
F_2=
\left[\begin{array}{cccc|ccc}
- \frac{\sin{\left(2 t \right)}}{2} & 0 & 0 & - \cos^{3}{\left(t \right)} & 0 & 0 & 0\\0 & 0 & \cos{\left(t \right)} & 0 & 0 & 0 & 0\\-1 & 0 & 0 & \sin{\left(t \right)} & 0 & 0 & 0\\0 & -1 & - \sin{\left(t \right)} & 0 & 0 & 0 & 0\\
\hline 
0 & 0 & 0 & 0 & - \alpha & \frac{\sin{\left(2 t \right)}}{2} & \left(\cos^{2}{\left(t \right)} + 1\right) \sin^{2}{\left(t \right)}\\0 & 0 & 0 & 0 & 0 & -1 & 0\\0 & 0 & 0 & 0 & 0 & 0 & \alpha
\end{array}\right].
\]

\subsection{Representation of the canonical projector}
From the representation of $K_1$ and $K_2$ we know
\[
A=\begin{bmatrix}
	 0 & 0 &  \cos^{2}{\left(t \right)}\\
 0 & 0 & 0\\
 0 & 0 & 0\\
 0 & 0 & 0\\
\end{bmatrix}, 
\]
\[ 
B=\begin{bmatrix}
\frac{\sin^{2}{\left(t \right)} + 1}{\alpha} & \frac{\cos{\left(t \right)}}{\alpha} & \frac{\sin{\left(2 t \right)}}{2 \alpha} & - \frac{\sin^{3}{\left(t \right)} + \sin{\left(t \right)}}{\alpha} \\0 & 0 & 1 & \cos{\left(t \right)} \\0 & 0 & 0 & 0 
\end{bmatrix}.
\]
Also in this case,  $AB=0$, such that

\[
\pPi_{can}=K_0
\left[\begin{array}{cccc|ccc}1 & 0 & 0 & 0 & 0 & 0 & - \cos^{2}{\left(t \right)}\\0 & 1 & 0 & 0 & 0 & 0 & 0\\0 & 0 & 1 & 0 & 0 & 0 & 0\\0 & 0 & 0 & 1 & 0 & 0 & 0\\
\hline
\frac{\sin^{2}{\left(t \right)} + 1}{\alpha} & \frac{\cos{\left(t \right)}}{\alpha} & \frac{\sin{\left(2 t \right)}}{2 \alpha} & - \frac{\sin^{3}{\left(t \right)} + \sin{\left(t \right)}}{\alpha} & 0 & 0 & \frac{\left(\cos^{2}{\left(t \right)} - 2\right) \cos^{2}{\left(t \right)}}{\alpha}\\0 & 0 & 1 & \cos{\left(t \right)} & 0 & 0 & 0\\
0 & 0 & 0 & 0 & 0 & 0 & 0
\end{array}\right]K_0^{-1}.
\]

Note that for the considered index-2 and index-3 DAEs in Hessenberg form and orthogonal $K_0$, $AB=0$ follows by construction, such that in these cases we always have
\[
\pPi_{can}= K_0 \begin{bmatrix}
	I_d  & -A \\
	B & -BA
\end{bmatrix}K_0^{-1}.
\]

\section{Conclusion}

DAEs in SCF or even SSCF are easy to understand and to handle.
However, the equivalence transformation of linear time varying DAEs into an SCF is usually rather difficult. 
Fortunately, for some classes of structured DAEs with known canonical characteristics, a transformation into the preliminary stage PreSCF introduced in Definition \ref{def:prel_SCF} results to be simpler and as soon as we have it, the computation of the SCF  becomes possible with the described steps that take advantage of nilpotency.
\medskip

Of course, in general the computation of this preliminary stage is not trivial either, but, at least for some classes of DAEs, relatively easy. 
We showed how to transform the  T-canonical and the S-canonical forms from the tractability and strangeness frameworks into PreSCF. Moreover, we  considered Hessenberg forms that have a completely different structure and computed the SCF step-by-step via the PreSCF.
We are confident that  for further classes of structured DAEs appropriate general descriptions of suitable transformations may be found.
\medskip

Once the SCF and the corresponding transformation matrix function $K$ that describes the transformation of unknowns is found, a description of the canonical subspaces $S_{can}$ and $N_{can}$, as well as the canonical projector is possible. This can be used in particular to formulate accurate initial conditions and transfer conditions from one
 time-window to the next in a time stepping procedure, cf.\ \cite{hanke_maerz2025}. 
In particular, we provide this for DAEs arising from linear or linearized multibody systems.

\appendix

\section{Appendix}

On the one hand, in this appendix we present a self-contained compilation of technical details concerning block upper triangular (BUT) matrix functions in a self-contained form.
Unlike the appendices of \cite{commonground2024} and \cite{SSCF25}, our focus is not limited solely to strictly block upper triangular (SUT) and hence nilpotent matrices, although these are included in our discussion as well.
\medskip

On the other hand, we present two basic equivalence transformations that were our starting point for the development of the algorithm from Section \ref{sec:Procedure}.
\medskip

We close the appendix with a remark on time-varying transformations of explicit ODE and its consequences for pure ODEs of DAEs.

\subsection{Block structured upper triangular matrix functions}\label{Appendix_block}
Suppose that for $\mu \in \N, \mu\geq 2$ some   integers
\[
\kappa_0 \geq  \theta_0\geq\cdots\geq\theta_{\mu-2}>0
\]
are given and  consider
\begin{itemize}
	\item either decreasing $\ell_1\geq\cdots\geq\ell_{\mu}>0$:
	\[
	\ell_1= \kappa_0, \quad \ell_i
= \theta_{i-2}, \quad i=2, \ldots, \mu,
	\]
	\item or increasing  $0<\ell_1\leq\cdots\leq\ell_{\mu}$:
	\[
	 \ell_i= \theta_{\mu-i-1}, \quad i=1, \ldots, \mu-1, \quad \ell_{\mu}=\kappa_0,
	\]
	\end{itemize}
 and $\ell := \sum_{i=1}^{\mu} \ell_i =\kappa_0+\sum_{j=0}^{\mu-2} \theta_j $. These two different orders appear in the well established canonical forms discussed in Section \ref{sec:T-S-canonical}, that have several common properties that we summarize here. 
\medskip

For $\mathcal I \subseteq \reals$ and fixed $\ell, \mu, \ell_1, \ldots, \ell_{\mu}$ we denote by 
\begin{itemize}
	\item $\BUT =\BUT(\ell,\mu,\ell_1,\ldots,\ell_{\mu})$ the set of all block upper triangular matrix functions
 $B :\mathcal I\rightarrow \reals^{\ell \times \ell}$
\[
B =
	\begin{bmatrix}
   B_{11}&B_{12}&&\cdots&B_{1,\mu} \\
   0&B_{22}&B_{23}& &0\\
   \vdots &\ddots &\ddots&\ddots&\vdots\\
   0&&&B_{\mu-1, \mu}&B_{\mu-1, \mu}\\
   0&0&\cdots&0&B_{\mu, \mu}
   \end{bmatrix}\begin{array}{lll}
		\left.\right\}  ~\ell_1 \\
		\left.\right\}  ~\ell_2\\
		\quad  \vdots &   \\
		\left.\right\} 	\ell_{\mu-1} \\
		\left.\right\} 	~\ell_{\mu}
	 \end{array}     
\]
with blocks
\begin{equation*}
B_{i,j} :\mathcal I\rightarrow \reals^{\ell_{i} \times \ell_{j}}, \quad \text{and} \quad B_{i,j} =0 \quad \text{for} \quad i>j. 
\end{equation*}
\item $\BUT_{nonsingular} =\BUT_{nonsingular}(\ell,\mu,\ell_1,\ldots,\ell_{\mu})$ the set of all nonsingular $R\in \BUT$ that therefore have nonsingular diagonal blocks
\begin{equation*}
R_{i,i} :\mathcal I\rightarrow \reals^{\ell_{i} \times \ell_{i}} \quad \text{for all} \quad t \in \mathcal I. 
\end{equation*}
\item $\SUT =\SUT(\ell,\mu,\ell_1,\ldots,\ell_{\mu})$  the set of all strictly block upper triangular matrix functions $N \in \BUT$, i.e., with
\[
N_{i,j} =0 \quad \text{for} \quad i\geq j.
\]

\item  In case of $\ell_1\geq \cdots\geq \ell_{\mu}$ we denote by 
\begin{itemize}
	\item 
$\SUT_{column}\subset \SUT$ the set  of all $N\in \SUT$ having exclusively blocks $(N)_{i,i+1}$ with full column rank, that is 
\begin{align}\label{N.col}
 \rank (N)_{i,i+1}=\ell_{i+1},\quad i=1,\ldots,\mu-1\quad \text{for all} \quad t \in \mathcal I.
\end{align}

\item  $N^{(E_c)} \in \SUT_{column}$ the elementary matrix
\[
N^{(E_c)} :=
	\begin{bmatrix}
   0&N^{(E_c)}_{12}&0&\cdots&0 \\
   &0&N^{(E_c)}_{23}& 0&0\\
   &&\ddots&\ddots&\vdots\\
   &&&&N^{(E_c)}_{\mu-1, \mu}\\
   &&&&0
   \end{bmatrix}\begin{array}{lll}
		\left.\right\}  ~\ell_1&=&\kappa_0 \\
		\left.\right\}  ~\ell_2&=&\theta_0 \\
		\quad  \vdots & &\vdots  \\
		\left.\right\} 	\ell_{\mu-1}&=&\theta_{\mu-3} \\
		\left.\right\} 	~\ell_{\mu}&=&\theta_{\mu-2}
	 \end{array}     
\]
with blocks
\begin{equation*}
N^{(E_c)}_{i,i+1} = \begin{bmatrix}
	I_{\ell_{i+1}} \\
	0
\end{bmatrix} \in \reals^{\ell_{i} \times \ell_{i+1}}. 
\end{equation*}
\end{itemize}
\item In case of $\ell_1\leq \cdots\leq \ell_{\mu}$ we denote by
\begin{itemize}
	\item  $\SUT_{row}\subset \SUT$ the set  of all $N\in \SUT$ having exclusively blocks $(N)_{i,i+1}$ with full row rank, that is 
\begin{align}\label{N.row}
 \rank (N)_{i,i+1}=\ell_{i},\quad i=1,\ldots,\mu-1 \quad \text{for all} \quad t \in \mathcal I.
\end{align}
\item $N^{(E_r)}  \in \SUT_{row}$ the elementary matrix
\[
N^{(E_r)} =
	\begin{bmatrix}
   0&N^{(E_r)}_{12}&0&\cdots&0 \\
   &0&N^{(E_r)}_{23}& 0&0\\
   &&\ddots&\ddots&\vdots\\
   &&&&N^{(E_r)}_{\mu-1, \mu}\\
   &&&&0
   \end{bmatrix} \begin{array}{lll}
		\left.\right\}  ~\ell_1&=& \theta_{\mu-2} \\
		\left.\right\}  ~\ell_2&=&\theta_{\mu-3} \\
		\quad  \vdots & &\vdots  \\
		\left.\right\} 	\ell_{\mu-1}&=&\theta_0 \\
		\left.\right\} 	~\ell_{\mu}&=&\kappa_0
	 \end{array}  
\]
with blocks
\begin{equation*}
N^{(E_r)}_{i,i+1} = \begin{bmatrix}
	0 & I_{\ell_{i}} 
\end{bmatrix} \in \reals^{\ell_{i} \times \ell_{i+1}}. 
\end{equation*}
\end{itemize}
\end{itemize}

We refer to \cite{commonground2024}, \cite{SSCF25} for some useful properties of $\SUT$, $\SUT_{column}$, $\SUT_{row}$, $N^{(E_c)}$ and $N^{(E_r)}$ and use also the notations
\[
\SUT_{c/r}, \quad   N^{c/r}, \quad N^{(E_{c/r})}
\]
if $\SUT_{column}, N^{c}, N^{(E_{c})}$ is meant for decreasing $\ell_i$ and  $\SUT_{row}, N^{r}, N^{(E_{r})}$ for increasing $\ell_i$, respectively. 

With this notation, all $N^{c/r} \in \SUT_{c/r}$ are nilpotent of order $\mu$ and for all $ t \in \mathcal I$
	\[
	\rank N^{c/r} = \ell-\kappa_0, \quad \theta_i=\rank (N^{c/r})^{i+1}-\rank (N^{c/r})^{i+2}, \quad i=0, \ldots, \mu-2.
	\]

\begin{lemma}\label{lem:R_BUT}
For $R \in \BUT_{nonsingular}$ it holds
\begin{itemize}
  \item $R^{-1} \in \BUT_{nonsingular}$, $(R^{-1})_{ii}=(R_{ii})^{-1}$,
	\item $N^{(E_{c/r})}R \in \SUT_{c/r}$,
	\item $R N^{(E_{c/r})} \in \SUT_{c/r}$,
	\item For any $N \in \SUT$, $R+N \in \BUT_{nonsingular}$.
\end{itemize}
\end{lemma}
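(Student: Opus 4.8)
The plan is to dispatch the four assertions one after another, each reducing to elementary block‑matrix bookkeeping once one records the two structural facts used throughout: products and sums of matrix functions that are block upper triangular for the fixed partition $(\ell_1,\dots,\ell_{\mu})$ are again block upper triangular for that partition, and a strictly block upper triangular factor forces the whole product into $\SUT$.

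For the first bullet I would split $R = D + U$ with $D = \diag(R_{11},\dots,R_{\mu\mu})$ block diagonal and $U \in \SUT$, so that $R = D(I + D^{-1}U)$. Since $D^{-1}U \in \SUT$ is nilpotent of order at most $\mu$, the Neumann series $(I+D^{-1}U)^{-1} = \sum_{k=0}^{\mu-1}(-1)^{k}(D^{-1}U)^{k}$ terminates and lies in $\BUT$ with identity diagonal blocks; hence $R^{-1} = (I+D^{-1}U)^{-1}D^{-1} \in \BUT$ with diagonal blocks $(R_{ii})^{-1}$, which are pointwise nonsingular, giving $R^{-1} \in \BUT_{nonsingular}$.

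For the second and third bullets, observe that $N^{(E_{c/r})} \in \SUT$ and $R \in \BUT$, so both products already lie in $\SUT$, and it only remains to check that the superdiagonal blocks keep full column (resp.\ row) rank. Because $N^{(E_{c/r})}$ has only its superdiagonal blocks nonzero, the block convolution collapses to $(N^{(E_{c/r})}R)_{i,i+1} = N^{(E_{c/r})}_{i,i+1}R_{i+1,i+1}$ and $(RN^{(E_{c/r})})_{i,i+1} = R_{ii}\,N^{(E_{c/r})}_{i,i+1}$. Multiplying the full‑column‑rank block $N^{(E_c)}_{i,i+1} = \begin{bmatrix} I_{\ell_{i+1}} \\ 0 \end{bmatrix}$ on either side by a pointwise nonsingular $R_{jj}$ preserves its rank $\ell_{i+1}$ at every $t$, and likewise the full‑row‑rank block $N^{(E_r)}_{i,i+1} = \begin{bmatrix} 0 & I_{\ell_i} \end{bmatrix}$ keeps rank $\ell_i$; hence $N^{(E_{c/r})}R,\ RN^{(E_{c/r})} \in \SUT_{c/r}$.

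For the last bullet, $R + N \in \BUT$ as a sum of two such functions, and its diagonal blocks are $(R+N)_{ii} = R_{ii}$ since $N \in \SUT$; these are pointwise nonsingular, so by the first bullet (or simply because $\det(R+N) = \prod_i \det R_{ii} \neq 0$) the matrix function $R+N$ is pointwise nonsingular, i.e.\ $R+N \in \BUT_{nonsingular}$. The only point requiring a little care is the uniform handling of the column/row cases and the verification that the block partition is preserved by the products; there is no genuine obstacle, the lemma being a catalogue of structural facts that feed the iteration in Section~\ref{sec:Procedure}.
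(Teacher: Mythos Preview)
Your argument is correct and is precisely the ``straight forward computation using the nonsingularity of the diagonal blocks $R_{ii}$'' that the paper invokes as its entire proof; you have simply spelled out the details (Neumann series for $R^{-1}$, the superdiagonal block identities $(N^{(E_{c/r})}R)_{i,i+1}=N^{(E_{c/r})}_{i,i+1}R_{i+1,i+1}$ and $(RN^{(E_{c/r})})_{i,i+1}=R_{ii}N^{(E_{c/r})}_{i,i+1}$, and the observation $(R+N)_{ii}=R_{ii}$). One small slip: the paper defines $N^{(E_r)}_{i,i+1}=\begin{bmatrix} I_{\ell_i} & 0 \end{bmatrix}$, not $\begin{bmatrix} 0 & I_{\ell_i} \end{bmatrix}$, though this does not affect your rank argument.
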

\begin{proof}
Straight forward computation using the nonsingularity of the diagonal blocks $R_{ii}$.
\end{proof}

\begin{lemma}\cite{SSCF25}\label{lem:Ncr}
\begin{itemize}
\item Every pair $\{N^c,I\}$ with a sufficiently smooth matrix function $N^c \in \SUT_{column}$ can be equivalently transformed into a pair $\{\hat{N}^c,I\}$ with a matrix function $\hat{N}^{c}\in \SUT_{column}$ showing special secondary diagonal blocks
\[
	\hat{N}^c_{i,i+1}=\begin{bmatrix}
		R_{i+1,i+1}\\
		0
	\end{bmatrix},
	\]
	and pointwise nonsingular $R_{i+1,i+1}$.
\item Every pair $\{N^r,I\}$ with a sufficiently smooth matrix function $N^r \in \SUT_{row}$ can be equivalently transformed into a  pair $\{\hat{N}^r,I\}$ with a matrix function $\hat{N}^{r}\in \SUT_{row}$ showing special secondary diagonal blocks
\[
	\hat{N}^r_{i,i+1}=\begin{bmatrix}
		R_{i,i}& 0
	\end{bmatrix}
	\]
	and pointwise nonsingular $R_{i,i}$.
\end{itemize}
\end{lemma}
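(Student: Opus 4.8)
The plan is to obtain $\hat N^c$ by a block–diagonal equivalence transformation. First I would observe that for any pointwise nonsingular block–diagonal $K=\diag(K_{11},\dots,K_{\mu\mu})$ with sufficiently smooth $K_{jj}:\mathcal I\to\reals^{\ell_j\times\ell_j}$, the matrix $K+N^cK'$ is block upper triangular with the nonsingular diagonal blocks $K_{jj}$, hence nonsingular, so $L:=(K+N^cK')^{-1}\in\BUT$ is well defined. Then $\{N^c,I\}\overset{L,K}{\sim}\{LN^cK,\,LK+LN^cK'\}=\{LN^cK,\,I\}$, i.e.\ the second matrix is left equal to the identity by construction. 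Since $L\in\BUT$, $N^c$ is strictly block upper triangular and $K$ is block diagonal, $\hat N^c:=LN^cK$ is again strictly block upper triangular with the same block sizes, and computing its $(i,i+1)$ blocks gives $\hat N^c_{i,i+1}=L_{ii}\,N^c_{i,i+1}\,K_{i+1,i+1}=K_{ii}^{-1}\,N^c_{i,i+1}\,K_{i+1,i+1}$, which has full column rank; so $\hat N^c\in\SUT_{column}$ automatically. Everything therefore reduces to choosing the $K_{jj}$ so that each $K_{ii}^{-1}N^c_{i,i+1}K_{i+1,i+1}$ has the prescribed shape $\begin{bmatrix}R_{i+1,i+1}\\0\end{bmatrix}$ with $R_{i+1,i+1}$ pointwise nonsingular.

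I would do this by a single downward sweep $j=\mu,\mu-1,\dots,1$. Set $K_{\mu\mu}:=I_{\ell_\mu}$; having fixed the nonsingular $K_{j+1,j+1}$, the block $M_j:=N^c_{j,j+1}K_{j+1,j+1}:\mathcal I\to\reals^{\ell_j\times\ell_{j+1}}$ still has constant full column rank $\ell_{j+1}$ (here $\ell_j\ge\ell_{j+1}$). For a sufficiently smooth matrix function of constant full column rank on an interval there is a sufficiently smooth pointwise nonsingular $\Phi_j:\mathcal I\to\reals^{\ell_j\times\ell_j}$ with $\Phi_jM_j=\begin{bmatrix}I_{\ell_{j+1}}\\0\end{bmatrix}$ — for instance, complete the columns of $M_j$ by a smooth frame $C_j$ of the complementary subbundle $t\mapsto(\im M_j(t))^{\perp}$ and put $\Phi_j:=[\,M_j\mid C_j\,]^{-1}$. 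Taking $K_{jj}:=\Phi_j^{-1}$ then yields $K_{jj}^{-1}N^c_{j,j+1}K_{j+1,j+1}=\begin{bmatrix}I_{\ell_{j+1}}\\0\end{bmatrix}$, so one may even take $R_{j+1,j+1}=I_{\ell_{j+1}}$ (using a smooth QR factorization of $M_j$ instead, one gets an upper–triangular nonsingular $R_{j+1,j+1}$). Since $K_{11}$ enters only the condition for $i=1$, the sweep produces no conflict, and we obtain the desired equivalence $\{N^c,I\}\sim\{\hat N^c,I\}$.

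For the second assertion, with $N^r\in\SUT_{row}$ and $\ell_1\le\dots\le\ell_\mu$, the same reduction gives $\hat N^r_{i,i+1}=K_{ii}^{-1}N^r_{i,i+1}K_{i+1,i+1}$, and I would instead sweep $i=1,2,\dots,\mu-1$: set $K_{11}:=I_{\ell_1}$, and given $K_{ii}$, the block $M_i:=K_{ii}^{-1}N^r_{i,i+1}$ has constant full row rank $\ell_i$; choose a sufficiently smooth pointwise nonsingular $K_{i+1,i+1}$ whose first $\ell_i$ columns span a complement of $\ker M_i(t)$ (e.g.\ the row space $\im M_i(t)^T$) and whose last $\ell_{i+1}-\ell_i$ columns span $\ker M_i(t)$. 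Then $M_iK_{i+1,i+1}=\begin{bmatrix}R_{i,i}&0\end{bmatrix}$ with $R_{i,i}$ nonsingular, and as above this produces $\{N^r,I\}\sim\{\hat N^r,I\}$ with $\hat N^r\in\SUT_{row}$.

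The only genuine difficulty is the smooth dependence on $t$ of these fibrewise normalizations, i.e.\ choosing the complementary frames $C_j$ (resp.\ the columns of $K_{i+1,i+1}$) smoothly in $t$; this is where the standing smoothness hypothesis and the fact that $\mathcal I$ is an interval — so every smooth subbundle of $\mathcal I\times\reals^{\ell_j}$ is trivial and admits a global smooth frame — are used. I expect this to be the main point to handle carefully in a complete write-up, whereas the algebraic bookkeeping (the bidiagonal coupling of the $K_{jj}$, resolved by a one–directional sweep) is routine.
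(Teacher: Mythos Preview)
Your argument is correct and follows the same overall strategy as the paper: act by a block--diagonal $K$ together with $L=(K+N^cK')^{-1}$ so that the second matrix stays $I$, and then note that only the secondary diagonal blocks $\hat N^c_{i,i+1}=K_{ii}^{-1}N^c_{i,i+1}K_{i+1,i+1}$ need to be brought into the prescribed shape, which you do by a one--directional sweep. The paper itself does not give a full proof here but refers to \cite{SSCF25}, stating that it is carried out via the SVD; your version replaces the smooth SVD by a generic smooth complementary--frame (or QR) argument, which is an equally valid way to produce the required pointwise nonsingular $\Phi_j$, and you correctly isolate the smooth dependence on $t$ over the interval $\mathcal I$ as the only genuine point needing care.
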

\begin{proof}
A proof that uses the SVD can be found in \cite{SSCF25}. A visualization of the pattern  is given in Figure \ref{fig:Bilder_RcRr}.
\end{proof}

\begin{lemma}\cite{SSCF25} \label{lem:Rcr}
\begin{itemize}
	\item In case of decreasing $\ell_1\geq \cdots\geq \ell_{\mu}$,  for any $N^c \in \SUT_{column}$ it holds that
	\[
	R^c:=N^c(N^{(E_c)})^T + (I-N^{(E_c)}(N^{(E_c)})^T) \in \BUT_{nonsingular} 
	\]
	and 
	\[
	R^c(N^{(E_c)})=N^c.
	\]	
	Moreover, if the secondary diagonal blocks have the pattern 
	\[
	N^c_{i,i+1}=\begin{bmatrix}
		R_{i+1,i+1}\\
		0
	\end{bmatrix},
	\]
	with pointwise nonsingular $R_{i+1,i+1}$, then $R^c$ is pointwise  nonsingular 
	and $(R^c)^{-1}N^c=N^{(E_c)}$.
	\item In case of  increasing $\ell_1\leq \cdots\leq \ell_{\mu}$, for any $N^r \in \SUT_{row}$ it holds that
	\[
	R^r:=(N^{(E_r)})^TN^r + (I-(N^{(E_r)})^T N^{(E_r)}) \in \BUT_{nonsingular}
	\]
	and 
	\[
	(N^{(E_r)})R^r=N^r.
	\]
	Moreover, if the secondary diagonal blocks have the pattern 
	\[
	N^r_{i,i+1}=\begin{bmatrix}
		R_{i+1,i+1}& 
		0
	\end{bmatrix},
	\]
	with pointwise nonsingular $R_{i+1,i+1}$, then $R^r$ is nonsingular and
	$N^r(R^{r})^{-1} =N^{(E_r)}$.
\end{itemize}
\end{lemma}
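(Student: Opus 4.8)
The plan is to prove the column bullet in full and to obtain the row bullet by the mirror‑image computation, with the roles of the first and the last block coordinate subspaces — and of left and right multiplication — interchanged. For the column case everything reduces to two block identities for the elementary matrix, which I would establish first. Writing $\Pi_{1}=\diag(I_{\ell_{1}},0,\ldots ,0)\in\reals^{a\times a}$ for the orthogonal projector onto the first block coordinate subspace, a direct block multiplication gives
\[
(N^{(E_c)})^{T}N^{(E_c)}=I_{a}-\Pi_{1},\qquad N^{(E_c)}(N^{(E_c)})^{T}=\diag(P_{1},\ldots ,P_{\mu-1},0),\qquad P_{i}=\begin{bmatrix}I_{\ell_{i+1}}&0\\0&0\end{bmatrix}\in\reals^{\ell_{i}\times\ell_{i}}.
\]
Since every $M\in\SUT$ has a vanishing first block column, $M(N^{(E_c)})^{T}N^{(E_c)}=M(I_{a}-\Pi_{1})=M$, and I would apply this both to $N^{c}$ and to $N^{(E_c)}$ itself.

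The identity then drops out by expanding
\[
R^{c}N^{(E_c)}=N^{c}(N^{(E_c)})^{T}N^{(E_c)}+N^{(E_c)}-N^{(E_c)}(N^{(E_c)})^{T}N^{(E_c)}=N^{c}+N^{(E_c)}-N^{(E_c)}=N^{c},
\]
which holds for every $N^{c}\in\SUT_{column}$, with no rank hypothesis. For the shape of $R^{c}$ I would observe that the $(i,j)$‑block of $N^{c}(N^{(E_c)})^{T}$ equals $N^{c}_{i,j+1}\begin{bmatrix}I_{\ell_{j+1}}&0\end{bmatrix}$, which vanishes whenever $i>j$; hence $N^{c}(N^{(E_c)})^{T}$ is block upper triangular, and since $I_{a}-N^{(E_c)}(N^{(E_c)})^{T}$ is block diagonal, $R^{c}\in\BUT$ with the block sizes of $N^{(E_c)}$.

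Next I would read off the diagonal blocks: $(R^{c})_{\mu\mu}=I_{\ell_{\mu}}$, while for $i<\mu$, writing $N^{c}_{i,i+1}=\begin{bmatrix}A_{i}\\ B_{i}\end{bmatrix}$ with $A_{i}\in\reals^{\ell_{i+1}\times\ell_{i+1}}$,
\[
(R^{c})_{ii}=N^{c}_{i,i+1}\begin{bmatrix}I_{\ell_{i+1}}&0\end{bmatrix}+\begin{bmatrix}0&0\\0&I_{\ell_{i}-\ell_{i+1}}\end{bmatrix}=\begin{bmatrix}A_{i}&0\\ B_{i}&I_{\ell_{i}-\ell_{i+1}}\end{bmatrix},
\]
which is nonsingular exactly when $A_{i}$ is. This is guaranteed by the reduced secondary–diagonal pattern of Lemma \ref{lem:Ncr}: for $N^{c}_{i,i+1}=\begin{bmatrix}R_{i+1,i+1}\\0\end{bmatrix}$ with $R_{i+1,i+1}$ pointwise nonsingular one has $A_{i}=R_{i+1,i+1}$, $B_{i}=0$, so every diagonal block of $R^{c}$ is nonsingular, i.e.\ $R^{c}\in\BUT_{nonsingular}$; by Lemma \ref{lem:R_BUT} also $(R^{c})^{-1}\in\BUT_{nonsingular}$, and left–multiplying $R^{c}N^{(E_c)}=N^{c}$ by $(R^{c})^{-1}$ yields $(R^{c})^{-1}N^{c}=N^{(E_c)}$. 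For the row bullet I would run the identical computation with $\Pi_{\mu}=\diag(0,\ldots ,0,I_{\ell_{\mu}})$ replacing $\Pi_{1}$ and with $(N^{(E_r)})^{T}$ acting on the left instead of $(N^{(E_c)})^{T}$ on the right: one has $N^{(E_r)}(N^{(E_r)})^{T}=I_{a}-\Pi_{\mu}$, a matrix in $\SUT$ has a vanishing last block row, and so $N^{(E_r)}R^{r}=N^{r}$ and, under the reduced pattern, $R^{r}\in\BUT_{nonsingular}$ with $N^{r}(R^{r})^{-1}=N^{(E_r)}$.

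The only genuinely delicate point is the block–index bookkeeping: multiplication by $(N^{(E_c)})^{T}$ renumbers block columns, so one must be careful when locating the diagonal blocks of $R^{c}$, and in the mirrored row computation the nonsingular secondary block ends up on the opposite side, so the reduced pattern has to be read accordingly. It is also worth stating explicitly that the nonsingularity of $R^{c}$ really does use the reduced form of the secondary diagonal blocks — for a general $N^{c}\in\SUT_{column}$ the square block $A_{i}$ need not be invertible — which is precisely why Lemma \ref{lem:Ncr} is invoked before this lemma everywhere in Section \ref{sec:T-S-canonical}.
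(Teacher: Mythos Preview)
Your argument is correct. The paper itself does not give a proof here; it merely cites \cite{SSCF25} (Lemmas 4.2 and 5.2 therein) and points to Figure~\ref{fig:Bilder_RcRr} for the block pattern, so your direct block computation supplies what the paper omits. The two identities you isolate, $(N^{(E_c)})^{T}N^{(E_c)}=I_{a}-\Pi_{1}$ and $N^{(E_c)}(N^{(E_c)})^{T}=\diag(P_{1},\ldots,P_{\mu-1},0)$, make both the verification of $R^{c}N^{(E_c)}=N^{c}$ and the identification of the diagonal blocks $(R^{c})_{ii}$ completely transparent, and the mirrored computation for the row case is equally clean.

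Your closing remark is worth emphasizing: the lemma as stated asserts $R^{c}\in\BUT_{nonsingular}$ for \emph{every} $N^{c}\in\SUT_{column}$, but your formula $(R^{c})_{ii}=\begin{bmatrix}A_{i}&0\\ B_{i}&I\end{bmatrix}$ shows this fails whenever the top square $A_{i}$ of some $N^{c}_{i,i+1}$ is singular (e.g.\ $\ell_{1}=2$, $\ell_{2}=1$, $N^{c}_{12}=\begin{bmatrix}0\\1\end{bmatrix}$ gives $R^{c}=\begin{bmatrix}0&0&0\\1&1&0\\0&0&1\end{bmatrix}$). The first assertion should therefore read $R^{c}\in\BUT$, with nonsingularity following only under the reduced secondary--diagonal pattern of Lemma~\ref{lem:Ncr}; the subsequent ``Moreover'' clause is then no longer redundant. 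You are right that this is precisely why Lemma~\ref{lem:Ncr} is always applied first in Section~\ref{sec:T-S-canonical}, so the imprecision in the statement is harmless for the paper's applications.
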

\begin{proof}
A proof can be found in \cite{SSCF25} (Lemma 4.2 and Lemma 5.2) therein, and the reasoning thereafter. A visualization of the pattern of the nonsingular matrix functions is given in Figure \ref{fig:Bilder_RcRr}. 
\end{proof}

\begin{corollary}\label{cor:Rcr}
For every $N\in \SUT_{c/r}$ there exists a $R^{c/r}\in \BUT_{nonsingular}$ such that
\[
\{ N, I\} \sim \{ N^{(E_{c/r})},  (R^{c/r})^{-1} \}.
\]
\end{corollary}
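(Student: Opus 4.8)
The plan is to derive the statement by chaining Lemma~\ref{lem:Ncr} and Lemma~\ref{lem:Rcr}, treating the decreasing and increasing block-size cases separately. By transitivity and symmetry of $\sim$ it is enough to produce, for a given $N\in\SUT_{c/r}$, one pair $(L,K)$ of pointwise nonsingular matrix functions and one $R^{c/r}\in\BUT_{nonsingular}$ with $\{N,I\}\overset{L,K}{\sim}\{N^{(E_{c/r})},(R^{c/r})^{-1}\}$. Since Lemma~\ref{lem:Ncr} already gives $\{N,I\}\sim\{\hat N,I\}$ for some $\hat N\in\SUT_{c/r}$ whose secondary diagonal blocks are (after padding by zeros) square and pointwise nonsingular, I may from the start assume that $N$ itself has this special pattern; this is precisely the hypothesis under which the ``moreover'' parts of Lemma~\ref{lem:Rcr} apply.

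In the decreasing case $N=N^{c}\in\SUT_{column}$ with these special secondary blocks, Lemma~\ref{lem:Rcr} provides $R^{c}:=N^{c}(N^{(E_c)})^{T}+(I-N^{(E_c)}(N^{(E_c)})^{T})\in\BUT_{nonsingular}$ with $(R^{c})^{-1}N^{c}=N^{(E_c)}$. I would then apply the equivalence transformation with $L=(R^{c})^{-1}$ and $K=I$; since $K'=0$ this maps $\{N^{c},I\}$ to $\{(R^{c})^{-1}N^{c},\,(R^{c})^{-1}\}=\{N^{(E_c)},(R^{c})^{-1}\}$, and $(R^{c})^{-1}\in\BUT_{nonsingular}$ by Lemma~\ref{lem:R_BUT}, so $R^{c/r}:=R^{c}$ does the job.

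In the increasing case $N=N^{r}\in\SUT_{row}$ with special secondary blocks, Lemma~\ref{lem:Rcr} yields $R^{r}\in\BUT_{nonsingular}$ with $N^{r}(R^{r})^{-1}=N^{(E_r)}$, and the natural choice is $L=I$, $K=(R^{r})^{-1}$, which produces the pair $\{N^{r}(R^{r})^{-1},\,(R^{r})^{-1}+N^{r}((R^{r})^{-1})'\}=\{N^{(E_r)},\,(R^{r})^{-1}+N^{r}((R^{r})^{-1})'\}$. The one point that needs a small argument --- and the closest thing to an obstacle --- is that the extra term $N^{r}((R^{r})^{-1})'$ coming from $LEK'$ does not spoil the structure: differentiating the constant identity $N^{r}(R^{r})^{-1}=N^{(E_r)}$ gives $N^{r}((R^{r})^{-1})'=-(N^{r})'(R^{r})^{-1}$, and this lies in $\SUT$ because $(N^{r})'\in\SUT$ and $(R^{r})^{-1}\in\BUT$. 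Hence the second component of the pair has the form $(R^{r})^{-1}+M$ with $M\in\SUT$, which is again in $\BUT_{nonsingular}$ by the last item of Lemma~\ref{lem:R_BUT}; taking $R^{c/r}$ to be its inverse (once more in $\BUT_{nonsingular}$) finishes the argument. Beyond this bookkeeping I do not anticipate any real difficulty, as the whole proof rests on the normalisation statements already in hand.
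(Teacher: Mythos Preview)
Your proposal is correct and follows essentially the same approach as the paper, which simply states that the assertion follows from Lemmas~\ref{lem:Ncr} and~\ref{lem:Rcr}; you are just supplying the details the paper leaves implicit. In particular, your observation in the increasing case that the $R^{c/r}$ produced is not $R^{r}$ itself but the inverse of $(R^{r})^{-1}+N^{r}((R^{r})^{-1})'$ (which you correctly identify as an element of $\BUT_{nonsingular}$ via Lemma~\ref{lem:R_BUT}) is exactly what is needed, and is consistent with how the paper later uses this corollary in the proof of Proposition~\ref{prof:quasiSCF}.
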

\begin{proof}
The assertions follows from Lemmas \ref{lem:Ncr} and \ref{lem:Rcr}.
\end{proof}

\begin{figure}
\includegraphics[width=\textwidth]{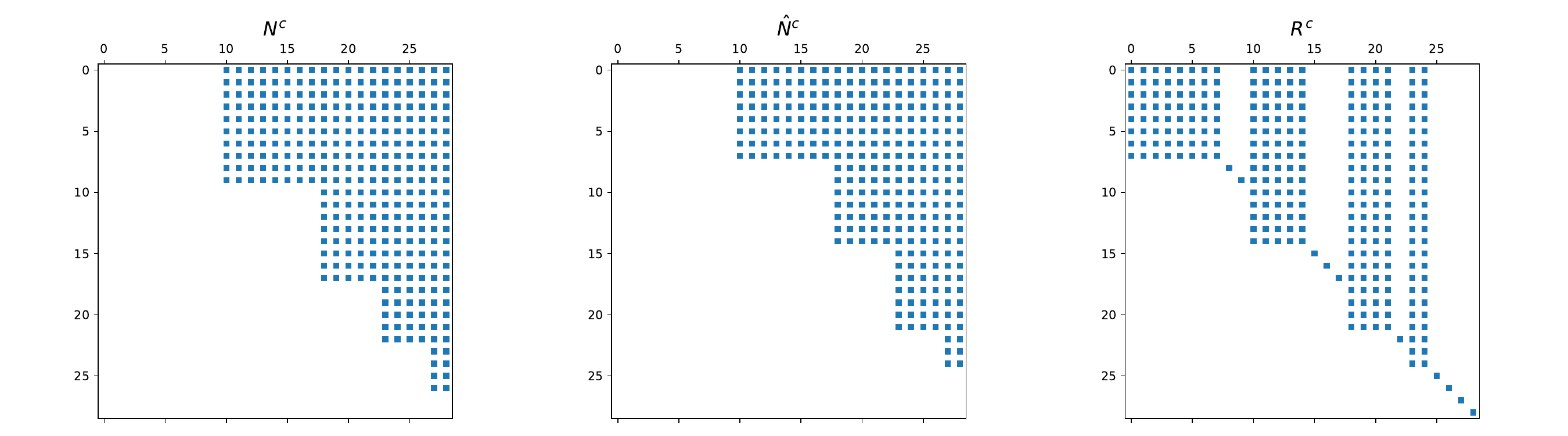}
\includegraphics[width=\textwidth]{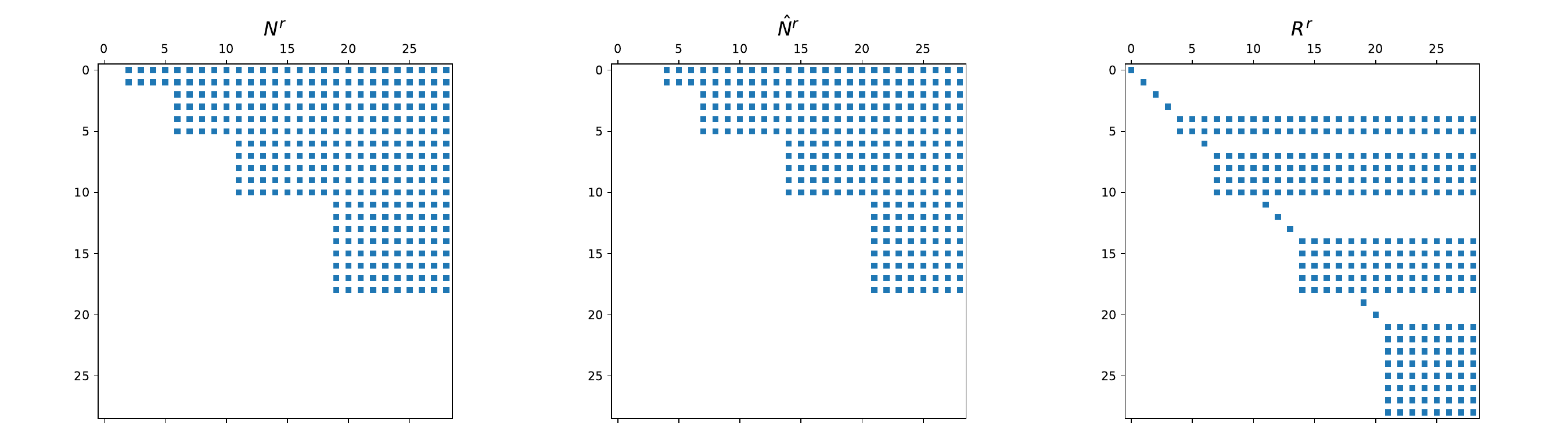}
\caption{Visualization of the pattern of $N^c$, $\hat{N}^c$ and a nonsingular $R^c$ constructed with $\hat{N}^c$ for decreasing block sizes (top) as well as $N^r$, $\hat{N}^r$ and a corresponding  nonsingular $R^r$ for increasing block sizes (bottom), cf.\ Lemma \ref{lem:Ncr} and Lemma \ref{lem:Rcr}.}
\label{fig:Bilder_RcRr}
\end{figure}

\subsection{Two elementary equivalence transformations} \label{Appendix:Equivalence}
\begin{lemma}\label{lem:M12}
For every pair in the form
\[
\{E,F\}=\left\{ \begin{bmatrix}
	I_d & 0 \\
	0 & E_{22}
\end{bmatrix}, \begin{bmatrix}
	F_{11}& F_{12} \\
	F_{21} & F_{22}
\end{bmatrix} \right\}, 
\]
every matrix function $M_{12} :\mathcal I\rightarrow \reals^{d \times a}$ and transformation matrix functions
\[
L=\begin{bmatrix}
	I_{d} & M_{12} \\
	0 & I_{a}
\end{bmatrix}, \quad 
K=\begin{bmatrix}
	I_{d} & -M_{12}E_{22} \\
	0 & I_{a}
\end{bmatrix},
\]
it holds
\begin{eqnarray*}
LEK&=&E, \\ 
LFK+LEK'&=&\begin{bmatrix}
F_{11} + M_{12}F_{21}  & F_{12} + M_{12}F_{22}  -  \left(F_{11} + M_{12}F_{21} \right) M_{12} E_{22}\\F_{21} &   F_{22} - F_{21} M_{12}E_{22} 
\end{bmatrix}\\
&& +\begin{bmatrix}
	0 & -M_{12}'E_{22} \\
	0 & 0
\end{bmatrix}.
\end{eqnarray*}

\end{lemma}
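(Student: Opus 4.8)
The statement is a self-contained block-matrix identity, so the plan is simply to expand the two products $LEK$ and $LFK+LEK'$ and collect terms, exploiting the unipotent block-triangular shape of $L$ and $K$. Observe at the outset that both $L$ and $K$ are nonsingular, being unipotent block upper triangular, so this is a legitimate equivalence transformation; moreover $L$ differs from $I_m$ only in its $(1,2)$ block $M_{12}$ and $K$ only in its $(1,2)$ block $-M_{12}E_{22}$, which keeps every product short.

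First I would check that the leading matrix is unchanged. Multiplying out, $EK=\begin{bmatrix} I_d & -M_{12}E_{22}\\ 0 & E_{22}\end{bmatrix}$, and left-multiplication by $L$ adds $M_{12}\,E_{22}$ back into the $(1,2)$ block, so the off-diagonal entries cancel exactly and $LEK=\begin{bmatrix} I_d & 0\\ 0 & E_{22}\end{bmatrix}=E$. The cancellation hinges precisely on the top-left block of $E$ being $I_d$ together with the choice $K_{12}=-M_{12}E_{22}$; this is exactly why that block is built into $K$.

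For the inhomogeneous term I would proceed in two parts. (i) Differentiating $K$ blockwise, only the $(1,2)$ block is time dependent, so $K'=\begin{bmatrix} 0 & -(M_{12}E_{22})'\\ 0 & 0\end{bmatrix}$; since $E$ preserves the first block-row of a matrix and $L$ acts trivially on a matrix whose second block-row vanishes, one gets $LEK'=\begin{bmatrix} 0 & -(M_{12}E_{22})'\\ 0 & 0\end{bmatrix}$, which (by the product rule on $(M_{12}E_{22})'$) supplies the $-M_{12}'E_{22}$ displacement recorded in the claim. (ii) Then I would compute $FK$ — a single multiplication touching only the second block-column, producing the corrections $-F_{11}M_{12}E_{22}$ and $-F_{21}M_{12}E_{22}$ — and finally $L\cdot FK$, a second multiplication adding to the first block-row the terms $M_{12}F_{21}$, $M_{12}F_{22}$ and $-M_{12}F_{21}M_{12}E_{22}$. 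Reading off the four blocks, and factoring $(F_{11}+M_{12}F_{21})$ out of the $(1,2)$ entry, reproduces the first matrix in the stated sum; adding the displacement term from (i) then yields the full identity.

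There is no genuine obstacle: the only care needed is bookkeeping — preserving the left-to-right order of the non-commuting factors in the $(1,2)$ block and applying the product rule to $(M_{12}E_{22})'$. This is the same routine verification that settles the sibling Lemmas~\ref{lem:M12F22} and~\ref{lem:M21F22}, of which the present statement is the common template with $E_{22}$ left general rather than specialized to $N^{(E_{c/r})}$.
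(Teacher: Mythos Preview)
Your proposal is correct and follows exactly the paper's approach, which is simply ``straightforward computation gives the result''; you have merely spelled out the block-by-block multiplication that the paper leaves implicit. One small remark: your appeal to the product rule for $(M_{12}E_{22})'$ tacitly uses $E_{22}'=0$ to reduce to the $-M_{12}'E_{22}$ term recorded in the claim, which is indeed how the lemma is applied throughout the paper (with $E_{22}=N^{(E_{c/r})}$ constant).
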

\begin{proof}
Straight forward computation gives the result.
\end{proof}
For nonsingular $F_{22}$ the choice $M_{12} = -F_{12} F_{22}^{-1}$ is
of particular interest, cf.\ Lemma \ref{lem:M12F22}.

\begin{lemma}\label{lem:M21}
For every  pair in the form
\[
\{E,F\}=\left\{ \begin{bmatrix}
	I_d & 0 \\
	0 & E_{22}
\end{bmatrix}, \begin{bmatrix}
	F_{11}& F_{12} \\
	F_{21} & F_{22}
\end{bmatrix} \right\}, 
\]
every matrix function $M_{21} :\mathcal I\rightarrow \reals^{a \times d} $ and transformation matrix functions
\[
L=\begin{bmatrix}
	I_{d} & 0 \\
	E_{22}M_{21} & I_{a}
\end{bmatrix}, \quad 
K=\begin{bmatrix}
	I_{d} & 0 \\
	-M_{21} & I_{a}
\end{bmatrix},
\]
it holds
\begin{eqnarray*}
LEK&=&E, \\
LFK+LEK'&=&\begin{bmatrix}
	F_{11} - F_{12} M_{21}& F_{12} \\
	F_{21} -F_{22}M_{21}+E_{22}M_{21} \left(F_{11}    -  F_{12} M_{21}\right) & F_{22}+E_{22}M_{21}F_{12}
\end{bmatrix}
\\
&& +\begin{bmatrix}
	0 & 0 \\
	-E_{22}M_{21}' & 0
\end{bmatrix}.
\end{eqnarray*}
\end{lemma}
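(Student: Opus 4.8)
The plan is a direct block-matrix computation in the $d+a$ splitting, entirely parallel in spirit to the one behind Lemma~\ref{lem:M12} but with the two block rows and columns interchanged. First I would note that both $L$ and $K$ are unit block lower triangular, hence pointwise nonsingular for any differentiable $M_{21}$, so they are admissible transformation matrix functions and only the two displayed identities remain to be checked.

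For $LEK=E$ I would form $EK$ and then left-multiply by $L$:
\[
EK=\begin{bmatrix} I_d & 0\\ -E_{22}M_{21} & E_{22}\end{bmatrix},\qquad
LEK=\begin{bmatrix} I_d & 0\\ E_{22}M_{21}-E_{22}M_{21} & E_{22}\end{bmatrix}=E,
\]
the $(2,1)$-block cancelling because $L$ carries the factor $+E_{22}M_{21}$ whereas $EK$ carries $-E_{22}M_{21}$. For the transformed second matrix function I would proceed in three pieces. The right factor $FK$ affects only the first block column, producing $\begin{bmatrix} F_{11}-F_{12}M_{21} & F_{12}\\ F_{21}-F_{22}M_{21} & F_{22}\end{bmatrix}$; left-multiplication by $L$ then adds $E_{22}M_{21}$ times the first block row to the second block row, which already yields the first summand of the claim, with $(2,1)$-entry $F_{21}-F_{22}M_{21}+E_{22}M_{21}(F_{11}-F_{12}M_{21})$ and $(2,2)$-entry $F_{22}+E_{22}M_{21}F_{12}$. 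Finally $K'=\begin{bmatrix}0&0\\ -M_{21}'&0\end{bmatrix}$ gives $EK'=\begin{bmatrix}0&0\\ -E_{22}M_{21}'&0\end{bmatrix}$ and $LEK'=EK'$, since the $L$-correction acts only on the vanishing first block row of $EK'$; adding this to $LFK$ inserts $-E_{22}M_{21}'$ into the $(2,1)$-entry and changes nothing else, matching the stated expression.

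I do not expect any genuine obstacle; the computation is routine. The only points demanding care are the product-rule term $LEK'$ and, more generally, tracking where the factor $E_{22}$ enters: here it sits inside $L$, to the left of $M_{21}$, rather than inside $K$, and this is exactly what makes the $(2,1)$- and $(2,2)$-entries of the transformed second matrix function asymmetric, mirroring the analogous asymmetry in Lemma~\ref{lem:M12}.
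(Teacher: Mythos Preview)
Your proof is correct and is precisely the straightforward block computation the paper alludes to; the paper itself records only ``Straight forward computation gives the result'' without spelling out the intermediate products, so your write-up simply makes explicit what the authors leave to the reader.
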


\begin{proof}
Straight forward computation gives the result.
\end{proof}
In this case, for nonsingular $F_{22}$ the choice $M_{21} = F_{22}^{-1}F_{21}$ is
of particular interest, cf.\ Lemma \ref{lem:M21F22}.
\medskip

\subsection{A note on transformations of explicit (pure) ODEs} \label{appendix:PureODEs}

Consider an explicit ODE 
\[
x'+\Omega x=0, 
\]
with a time-varying matrix function $\Omega: \mathcal I \rightarrow \reals^{d\times d}$, and
an unknown function $x$. 
A transformation $x=K\tilde{x}$ by a pointwise nonsingular matrix function $K$  leads to 
\[
x'+\Omega x=K'\tilde{x}+K\tilde{x}'+\Omega K\tilde{x}=K\tilde{x}'+(\Omega K+K')\tilde{x},
\]
such that the premultiplication by $L:=K^{-1}$ provides
the transformed ODE $\tilde{x}'+\tilde{\Omega} \tilde{x}=0$ with  $\tilde{\Omega}=K^{-1}(\Omega K+K')$.
\medskip

Let us now suppose that for any given $\alpha \in \reals$ we want to compute $K$ such that $\tilde{\Omega}=\alpha I$. From $\alpha I=K^{-1}(\Omega K+K')$ it follows that such a matrix function $K$ has to fulfill the system of ODEs
\[
K'=(\alpha I-\Omega)K.
\]

If we choose an nonsingular initial value $K(0)=K_0$, the solution $K$ is nonsingular on $\mathcal I$ due to the formula of Liouville. Indeed, $K$ is the fundamental matrix of the above ODE. Consequently,
\[
\left\{ I, \Omega \right\} \overset{K^{-1}, K}{\sim} \left\{ I, \alpha I \right\}.
\]

Notice that for $\alpha_1>0$ and $\alpha_2<0$ we therefore can find  nonsingular $K_1$ and  $K_2$, such that the equivalent ODEs present completely different stability properties. This is a crucial difference between constant transformations of coordinates and time-varying transformations.
\medskip

This means that for DAEs, as showed also in Examples \ref{ex:ODEK11-1} and \ref{ex:ODEK11-2}, different pure ODEs may present completely different stability properties.
However, in both Examples, either $\left|K_{11}\right|$ or $\left|K_{11}^{-1}\right|$
grows unboundedly for $t\rightarrow \infty$. 
\medskip

Moreover, the above considerations imply that with the scope of the here permitted transformations we can formulate a further possible definition of regular DAEs.

\begin{proposition}
 A pair of matrix functions $\{E,F\}$ is regular, iff  
\[
\{E,F\} \ \sim \  \left\{ \begin{bmatrix}
	I_{d} & 0 \\
	0 & N^{(E_{c/r})}
\end{bmatrix}, \begin{bmatrix}
	\alpha I_d & 0 \\
	0 & I_{a}
\end{bmatrix} \right\},
\]
for any $\alpha \in \reals$.
\end{proposition}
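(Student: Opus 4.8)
The plan is to reduce to the characterization of regularity already at our disposal and then adjust \emph{only} the differential block. For the ``if'' direction there is nothing to do: the displayed pair is the special case $\Omega:=\alpha I_d$ of the form in Proposition~\ref{def:regEF_N} (and of the elementary-form variant stated immediately after it), so equivalence to it forces $\{E,F\}$ to be regular. The ``only if'' direction is the substantive one, and the idea is to take the known elementary SCF and post-compose with a block-diagonal transformation whose algebraic block is the identity, so that the nilpotent part $N^{(E_{c/r})}$ is untouched while $\Omega$ gets conjugated away to $\alpha I_d$ — exactly the reduction carried out for scalar ODEs in Appendix~\ref{appendix:PureODEs}.

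Concretely, assume $\{E,F\}$ is regular. By Proposition~\ref{def:regEF_N} together with its subsequent elementary-form version, there exist pointwise nonsingular, sufficiently smooth $L,K$ with
\[
\{E,F\}\ \overset{L,K}{\sim}\ \left\{ \begin{bmatrix} I_d & 0 \\ 0 & N^{(E_{c/r})} \end{bmatrix},\ \begin{bmatrix} \Omega & 0 \\ 0 & I_a \end{bmatrix} \right\}
\]
for some $\Omega:\mathcal I\rightarrow\reals^{d\times d}$. Fix $t_0\in\mathcal I$ and let $W:\mathcal I\rightarrow\reals^{d\times d}$ be the solution of the matrix initial value problem
\[
W' = (\alpha I_d-\Omega)\,W,\qquad W(t_0)=I_d .
\]
Then $W$ is at least $C^1$ (and as smooth as $\Omega$ permits), and by Liouville's formula it is pointwise nonsingular on all of $\mathcal I$; moreover $W^{-1}(\Omega W+W')=\alpha I_d$, i.e.\ $\{I_d,\Omega\}\overset{W^{-1},W}{\sim}\{I_d,\alpha I_d\}$, which is precisely the computation recorded in Appendix~\ref{appendix:PureODEs}.

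It then suffices to apply the block-diagonal transformation
\[
\bar L=\begin{bmatrix} W^{-1} & 0 \\ 0 & I_a \end{bmatrix},\qquad \bar K=\begin{bmatrix} W & 0 \\ 0 & I_a \end{bmatrix},
\]
which leaves the first matrix unchanged, leaves the algebraic pair $\{N^{(E_{c/r})},I_a\}$ unchanged (the transformation is the identity on that block and $\bar K$ is constant there), and replaces the upper-left block $\Omega$ by $\alpha I_d$. Composing with $L,K$ and using transitivity of $\sim$ gives the desired equivalence, and since $\alpha\in\reals$ was arbitrary this settles the ``only if'' direction. The one point that is not merely routine block arithmetic (of the type in Appendix~\ref{Appendix:Equivalence}) is the global nonsingularity of $W$ on the whole interval $\mathcal I$, not just near $t_0$; this is exactly what Liouville's formula supplies, and it is also the reason that no restriction tying $\alpha$ to any spectral quantity of $\Omega$ is needed.
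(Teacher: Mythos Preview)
Your proof is correct and follows essentially the same route as the paper: the ``if'' direction is immediate from the elementary-form characterization of regularity, and for the ``only if'' direction the paper constructs exactly the same fundamental matrix $K$ (your $W$) solving $K'=(\alpha I-\Omega)K$, invokes Liouville's formula for global nonsingularity on $\mathcal I$, and then applies the corresponding block-diagonal transformation that leaves the algebraic part untouched. The only cosmetic difference is that the paper presents the ODE reduction $\{I,\Omega\}\sim\{I,\alpha I\}$ as a preliminary discussion and states the proposition as its consequence, whereas you have spelled out the block-diagonal step explicitly.
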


\bibliography{Computing-SCF}{}
\bibliographystyle{plain}


\end{document}